\newtheorem{thm}{Theorem}[section]
\newtheorem{cor}[thm]{Corollary}
\newtheorem{lem}[thm]{Lemma}
\newtheorem{prop}[thm]{Proposition}
\theoremstyle{definition}
\newtheorem{defn}[thm]{Definition}
\theoremstyle{theorem}
\newtheorem{rem}[thm]{Remark}
\newtheorem{ex}[thm]{Example}
\theoremstyle{claim}
\numberwithin{equation}{section}
\def\besec{K_{-}^{\Phi}}
\def\beric{\Ric_{-}^{\Phi}}
\DeclareMathOperator\tr{tr}
\DeclareMathOperator\Ric{Ric}
\DeclareMathOperator\sgn{sgn}
\begin{document}
\title{Rigidity of $k$-extremal submanifolds in a sphere}

\author{Hang Chen}
\address[Hang Chen]{School of Mathematics and Statistics, Northwestern Polytechnical University, Xi' an 710129, P. R. China \\ email: chenhang86@nwpu.edu.cn}
\thanks{Chen was supported by Shaanxi Fundamental Science Research Project for Mathematics and Physics (Grant No.~22JSQ005)}
\author{Yaru Wang}
\address[Yaru Wang]{School of Mathematics and Statistics, Northwestern Polytechnical University, Xi' an 710129, P. R. China \\ email: wyr142857@mail.nwpu.edu.cn}
\thanks{}

\begin{abstract}
In this paper, we study the rigidity of $k(\ge 1)$-extremal submanifolds in a sphere and prove various pinching theorems under different curvature conditions, including sectional and Ricci curvatures in pointwise and integral sense.
\end{abstract}
\keywords {Rigidity of submanifolds, $k$-extremal submanifolds, sectional curvatures,  Ricci curvatures}

\subjclass[2020]{53C20, 53C42}

\maketitle

\section{Introduction}
The study of rigidity phenomenon for submanifolds under certain curvature pinching conditions is one of important topics in geometry of submanifolds, and the history can date back to 1968, when Simons \cite{Sim68} studied the minimal submanifolds in a sphere.
By using Simons' equation, Simons and Chern-do Carmo-Kobayashi proved the rigidity theorem.
\begin{thm}[\cites{Sim68, CdCK70}]\label{thm-Sim}
	Let $M$ be an $n$-dimensional closed minimal submanifold in a unit sphere $\mathbb {S}^{n+p}$ of dimensional $(n+p)$.
	If the squared norm $S$ of the second fundamental form of $M$ satisfies
	\begin{equation}\label{Simons-pinch-cond}
		S\le \frac{n}{2-1/p},
	\end{equation}
	then either $S\equiv 0$ and $M$ is totally geodesic, or $S\equiv \frac{n}{2-1/p}$. In the latter case, $M$ is either one of the Clifford tori with $p=1$, or the Veronese surface in $\mathbb{S}^4$ with $n=p=2$.
\end{thm}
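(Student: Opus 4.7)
The plan is to follow the classical route through Simons' identity and the sharp algebraic inequality of Chern--do Carmo--Kobayashi, then analyze the equality case.

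First I would set up the second fundamental form $h$ with shape operators $H_\alpha$ for an orthonormal normal frame $\{e_\alpha\}$ and recall Simons' formula for a minimal submanifold of $\mathbb{S}^{n+p}$:
\[
\tfrac{1}{2}\Delta S \;=\; |\nabla h|^2 \;+\; nS \;-\; \sum_{\alpha,\beta}\bigl(\tr(H_\alpha H_\beta)\bigr)^2 \;-\; \sum_{\alpha,\beta}\|[H_\alpha,H_\beta]\|^2.
\]
The next step is to invoke the sharp algebraic estimate
\[
\sum_{\alpha,\beta}\bigl(\tr(H_\alpha H_\beta)\bigr)^2 \;+\; \sum_{\alpha,\beta}\|[H_\alpha,H_\beta]\|^2 \;\le\; \Bigl(2-\tfrac{1}{p}\Bigr) S^2,
\]
which is proved by passing to a normal frame that diagonalizes $H_1$ and then applying Cauchy--Schwarz together with a careful bookkeeping of the off-diagonal entries; this is the classical ingredient that turns the factor $2$ (trivial bound) into $2-1/p$.

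Combining the two displays gives
\[
\tfrac{1}{2}\Delta S \;\ge\; |\nabla h|^2 \;+\; S\Bigl(n-\bigl(2-\tfrac{1}{p}\bigr)S\Bigr),
\]
and the pinching hypothesis \eqref{Simons-pinch-cond} makes the right-hand side pointwise non-negative. Since $M$ is closed, integrating and applying the divergence theorem forces $|\nabla h|^2 \equiv 0$ and $S\bigl(n-(2-1/p)S\bigr)\equiv 0$. A standard connectedness argument then yields the dichotomy $S\equiv 0$ or $S\equiv \tfrac{n}{2-1/p}$.

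The hardest part is the equality case, where I expect the real work to lie. One must trace back when the Chern--do Carmo--Kobayashi inequality is sharp: this forces either $p=1$ or $n=p=2$, together with very rigid conditions on the eigenvalues of the shape operators. For $p=1$, the parallelism $\nabla h=0$ combined with $S=n/(2-1/p)=n$ and the splitting of principal curvatures into exactly two values (of multiplicities $k$ and $n-k$) identifies $M$ as a Clifford torus $\mathbb{S}^k(\sqrt{k/n})\times \mathbb{S}^{n-k}(\sqrt{(n-k)/n})$; for $n=p=2$ one shows via the trace-free tensor analysis that $M$ is immersed as the Veronese surface in $\mathbb{S}^4$. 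Both classifications use $\nabla h\equiv 0$ to reduce to a purely algebraic problem about symmetric bilinear forms and then apply the known models.
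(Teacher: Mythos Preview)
The paper does not give its own proof of this statement: Theorem~\ref{thm-Sim} is quoted in the introduction as a classical result of Simons and Chern--do Carmo--Kobayashi, with citations to \cite{Sim68} and \cite{CdCK70}, and is used only as historical motivation. So there is no in-paper proof to compare against.

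That said, your sketch is the standard classical argument and is essentially correct. A couple of small remarks. First, the way you phrase the algebraic inequality hides where the factor $2-1/p$ actually comes from: in the CdCK proof one first rotates the normal frame so that the matrix $(\tr(H_\alpha H_\beta))$ is diagonal, uses the lemma $N([A,B])\le 2\,N(A)N(B)$ for symmetric matrices with $\tr(AB)=0$, and then combines this with the Cauchy--Schwarz bound $\sum_\alpha \sigma_{\alpha\alpha}^2\ge S^2/p$; your description ``diagonalize $H_1$'' is not quite the right reduction. Second, in the equality analysis you should be explicit that equality in $\sum_\alpha \sigma_{\alpha\alpha}^2\ge S^2/p$ forces all $\sigma_{\alpha\alpha}$ equal, and equality in the commutator bound forces the specific two-eigenvalue structure; together with $\nabla h=0$ this is what pins down $p=1$ or $n=p=2$. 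These are refinements of presentation rather than genuine gaps.
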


The Clifford tori are
\begin{equation}
	C_{m,n-m}:=\mathbb {S}^{m}(\sqrt{\frac{m}{n}})\times \mathbb {S}^{n-m}(\sqrt{\frac{n-m}{n}}),  \quad 1\le m \le n-1.
\end{equation}
The standard embedding of $C_{m,n-m}\to \mathbb{S}^{n+1}$ is minimal.

After the work of Simons and Chern-do Carmo-Kobayashi, a lot of rigidity results were proven by different geometers, for instance, \cites{She89, LL92} for scalar curvature pinching, \cites{Yau74,Yau75,Ito75,GX12} for sectional curvature pinching, \cites{Eji79,She92, Li93} for Ricci curvatures pinching, \cite{Xu94} for global rigidity theorem involving the $L_{n/2}$-norm of $S$.
Recently, Wei and the first author proved the following rigidity results under the integral Ricci curvature.
\begin{thm}{\cite{CW21}*{Theorem 1.4}}\label{thm-CW}
	Let $M$ be an $n(\ge 4)$-dimensional closed minimal submanifold in $\mathbb {S}^{n+p}$. Given a real number $\lambda$ satisfying $(n-2)<(n-1)\lambda\le (n-1)$.
	Then there exists an explicit constant $\epsilon(n)$depending only on $n$ such that if
	\begin{equation}\label{CW-Ricci-int-pinch-cond}
		\|\Ric_{-}^{\lambda}\|_{n/2}<\epsilon(n),
	\end{equation}
	then $M$ is totally geodesic.
\end{thm}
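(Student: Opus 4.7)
My plan is to carry out a Simons-style $L^{n/2}$ integral argument adapted to the Ricci deficit $\Ric_{-}^{\lambda}$. The proof splits naturally into a pointwise algebraic step that produces a Bochner-type differential inequality for $S:=|h|^{2}$ with an error linear in $\Ric_{-}^{\lambda}$, and an integral step that absorbs that error via a Michael--Simon type Sobolev inequality for minimal submanifolds together with H\"older.

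For the pointwise step, I would start from Simons' identity for a minimal submanifold of $\mathbb{S}^{n+p}$,
\begin{equation*}
\tfrac{1}{2}\Delta S=|\nabla h|^{2}+nS-\sum_{\alpha,\beta}\bigl[\tr(h^{\alpha}h^{\beta})\bigr]^{2}-\sum_{\alpha,\beta}|h^{\alpha}h^{\beta}-h^{\beta}h^{\alpha}|^{2},
\end{equation*}
combined with the minimality identity $\Ric_{ij}=(n-1)g_{ij}-\sum_{\alpha,k}h^{\alpha}_{ik}h^{\alpha}_{kj}$. Expressing the quartic curvature terms through the eigenvalues of the Ricci tensor and isolating the set where $\Ric\ge(n-1)\lambda\,g$ fails, a careful rearrangement in the spirit of the pointwise Ricci pinching of Ejiri--Shen--Li should yield an inequality of the form
\begin{equation*}
\tfrac{1}{2}\Delta S\ge|\nabla h|^{2}+\delta(n,\lambda)\,S-C(n)\,\Ric_{-}^{\lambda}\cdot S,
\end{equation*}
with $\delta(n,\lambda)>0$ precisely under the hypothesis $(n-1)\lambda>n-2$, which is where the lower restriction on $\lambda$ enters.

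For the integral step, I would set $f:=|h|=S^{1/2}$, integrate the Bochner inequality over the closed manifold $M$, and apply the refined Kato inequality $|\nabla h|^{2}\ge\tfrac{n+2}{n}|\nabla f|^{2}$ to obtain
\begin{equation*}
\tfrac{n+2}{n}\int_{M}|\nabla f|^{2}+\delta(n,\lambda)\int_{M}f^{2}\le C(n)\int_{M}\Ric_{-}^{\lambda}\,f^{2}.
\end{equation*}
H\"older's inequality with exponents $(n/2,\,n/(n-2))$ bounds the right-hand side by $\|\Ric_{-}^{\lambda}\|_{n/2}\cdot\|f\|_{2n/(n-2)}^{2}$, and the Michael--Simon Sobolev inequality on a minimal submanifold of $\mathbb{S}^{n+p}$ controls $\|f\|_{2n/(n-2)}^{2}$ in terms of $\|\nabla f\|_{2}^{2}+\|f\|_{2}^{2}$ with a constant depending only on $n$. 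Choosing $\epsilon(n)$ strictly less than the reciprocal of the resulting product of constants forces $f\equiv 0$, hence $S\equiv 0$ and $M$ totally geodesic.

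The principal obstacle is the algebraic lemma underlying the pointwise step: showing that the quartic Simons expression genuinely dominates $\delta(n,\lambda)\,S$ modulo a Ricci-deficit error throughout the whole range $n-2<(n-1)\lambda\le n-1$, with the constants $\delta(n,\lambda)$ and $C(n)$ explicit enough to deliver a quantitative $\epsilon(n)$ in the end. Getting a clean dependence on $n$ alone (independent of $p$ and of $\lambda$ in the admissible range) is the delicate part; the integral argument afterwards is essentially a standard Sobolev--H\"older absorption.
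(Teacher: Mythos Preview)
This theorem is not proved in the present paper; it is quoted from \cite{CW21} as background for the authors' own results. There is therefore no proof here to compare your proposal against directly. That said, the paper does prove the $k$-extremal analogue, Theorem~\ref{thm_ricci_int}, and its argument (together with the pointwise Theorem~\ref{thm_ricci}) follows exactly the template you describe: Ejiri's trick to extract a Bochner-type inequality with a $(\Ric_{\min}-C_2)$ coefficient (see \eqref{eq-Q} and \eqref{eq-Ricci-key}), the Kato inequality of Lemma~\ref{gradhrho}, the Sobolev inequality of Proposition~\ref{prop-int}, and H\"older absorption of $\|\beric\|_{n/2}$. Your outline is correct and is essentially the same approach.

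Two small remarks on your write-up. First, the restriction $n\ge 4$ is not incidental: in the Ejiri-type estimate (cf.\ \eqref{eq-Q}) the term $\tfrac{4-n}{n}\sum_\alpha N(\tilde A_\alpha)^2$ must have the right sign, and this is precisely where $n\ge 4$ enters; you should flag this explicitly rather than bury it in ``careful rearrangement.'' Second, your claim that $\epsilon$ can be taken independent of $\lambda$ is more delicate than you suggest: in the absorption step the parameter $t$ in $\mathfrak{A}(n,t),\mathfrak{B}(n,t)$ is tuned against $\delta=(n-1)\lambda-(n-2)$, so as $\lambda\downarrow\tfrac{n-2}{n-1}$ the naive constant degenerates. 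Getting a uniform $\epsilon(n)$ requires either restricting to a fixed $\lambda$ or an additional argument; be precise about which you intend.
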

The notation $\Ric_{-}^{\lambda}$ in above theorem is defined by $\Ric_{-}^{\lambda}(x)=\max\{0, (n-1)\lambda-\Ric_{\min}(x)\}$, where $\Ric_{\min}(x)$ denotes the smallest eigenvalue of the Ricci tensor at $x\in M$.
Intuitively, the $L_q$-norm $\|\Ric_{-}^{\lambda}\|_{q}=\Big(\int_M \big(\Ric_{-}^{\lambda}\big)^q\Big)^{1/q}$ measures the amount of Ricci curvature lying below the given bound $(n-1)\lambda$.
More generally, we can replace $\lambda$ by a function $\Phi$ on $M$ and define
\begin{equation*}
	\beric(x)=\max\{0,(n-1)\Phi-\Ric_{\min}(x)\}.
\end{equation*}
Similarly, for sectional curvature, we can define
\begin{equation*}
	\besec(x)=\max\{0,\Phi(x)-K_{\min}(x)\},
\end{equation*}
where $K_{\min}(x)$ denotes the smallest of sectional curvature at $x\in M$.
Zhu proved an analogue of Theorem \ref{thm-CW}  under the integral sectional curvature in her master's thesis \cite{Zhu19}.
Very recently, Pan-Xu-Zhao \cite{PXZ23} proved a rigidity theorem for minimal submanifolds in a sphere under the integral scalar curvature.
Most of the results have been generalized from minimal submanifolds in a sphere to submanifolds in a space form with parallel mean curvature, see \cites{Xu94,GX12,XG13,CW21, Zhu19, PXZ23}.

On the other hand, since minimal submanifolds are the critical points of the area functional, this motivates one can study the rigidity for submanifolds which are the critical points of some other functional.
Given $k\ge 1$, for an $n$-dimensional closed submanifold $M$ in $\mathbb{S}^{n+p}$, we define the functional
\begin{equation}
	F_k=\int_M \rho^{2k},
\end{equation}
where $\rho^2=S-nH^2$ and $H$ is the norm of the mean curvature of $M$.
It is well known that $\rho^2$ is non-negative, and $\rho^2$ vanishes only at the umbilical points of $M$.
We call $M$ is a $k$-extremal submanifold if $M$ is a critical point of $F_k$.

When $k=n/2$, $F_k$ is just the Willmore functional, which is invariant under the conformal transformations of $\mathbb{S}^n$, and the critical points are called Willmore submanifolds.
This case has been intensively studied, see \cites{Li01,Li02,Li02a, GLW01} and the references therein.
Li proved an analogue of Theorem \ref{thm-Sim} for Willmore submanifolds.
\begin{thm}[\cites{Li01,Li02a}]\label{thm-Li}
	Let $M$ be an $n$-dimensional closed Willmore submanifold in $\mathbb {S}^{n+p}$.
	If
	\begin{equation}\label{Li-pinch-cond}
		\rho^2\le \frac{n}{2-1/p},
	\end{equation}
	then either $\rho^2\equiv 0$ and $M$ is totally umbilical, or $\rho^2\equiv \frac{n}{2-1/p}$. In the latter case, $M$ is either one of the Willmore tori with $p=1$, or the Veronese surface in $\mathbb{S}^4$ with $n=p=2$.
\end{thm}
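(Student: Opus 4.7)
The plan is to adapt the Simons / Chern-do Carmo-Kobayashi strategy used for Theorem \ref{thm-Sim} to the Willmore setting. Since $M$ is Willmore rather than minimal, its mean curvature vector need not vanish, so the natural object to work with is the traceless part $\tilde{h}^{\alpha}_{ij} = h^{\alpha}_{ij} - H^{\alpha}\delta_{ij}$ of the second fundamental form in a local orthonormal frame, where $\{H^{\alpha}\}$ are the components of the mean curvature vector (so $\sum_{\alpha}(H^{\alpha})^{2}=H^{2}$). Its squared norm is exactly $\rho^{2}$.

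First I would write down the Euler-Lagrange equation of the Willmore functional $F_{n/2}=\int_{M}\rho^{n}$. This is a fourth-order elliptic system relating $\Delta^{\perp}H^{\alpha}$, traces of products of the matrices $\tilde{A}^{\alpha}=(\tilde{h}^{\alpha}_{ij})$ among themselves and with $H^{\beta}\tilde{A}^{\beta}$, and the ambient sectional curvature. Next, starting from the standard Simons identity for $h^{\alpha}_{ij}$ in the constant-curvature ambient $\mathbb{S}^{n+p}$, I would rewrite everything in terms of $\tilde{h}$ and $H$, and then use the Willmore equation to trade the $\Delta^{\perp}H$ and $\nabla H$ contributions for cubic expressions in $\tilde{h}$ and $H$.

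Then I would apply the algebraic inequality of Chern-do Carmo-Kobayashi, in its refinement for the traceless tensor $\tilde{A}^{\alpha}$,
\begin{equation*}
\sum_{\alpha,\beta}\Bigl[-\operatorname{tr}\bigl([\tilde{A}^{\alpha},\tilde{A}^{\beta}]^{2}\bigr)+\bigl(\operatorname{tr}\tilde{A}^{\alpha}\tilde{A}^{\beta}\bigr)^{2}\Bigr]\le\Bigl(2-\tfrac{1}{p}\Bigr)\rho^{4},
\end{equation*}
to bound the cubic normal-curvature term by a multiple of $\rho^{4}$. The resulting pointwise inequality should take the form
\begin{equation*}
\tfrac{1}{2}\Delta\rho^{2}\ge|\nabla\tilde{h}|^{2}+\rho^{2}\Bigl(n-\Bigl(2-\tfrac{1}{p}\Bigr)\rho^{2}\Bigr).
\end{equation*}
Since $M$ is closed, integrating the left-hand side over $M$ gives zero, and the pinching hypothesis $\rho^{2}\le n/(2-1/p)$ makes every term on the right nonnegative, so all of them must vanish pointwise. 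Hence either $\rho\equiv 0$, in which case $M$ is totally umbilical, or $\rho^{2}\equiv n/(2-1/p)$ together with $\nabla\tilde{h}\equiv 0$ and pointwise equality in the algebraic inequality above. The rigid data in the second case are well known (see \cite{CdCK70}) to force $M$ to be either a Willmore torus when $p=1$, or the Veronese surface in $\mathbb{S}^{4}$ when $n=p=2$.

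The main obstacle is the bookkeeping needed to convert the Simons identity plus the Willmore Euler-Lagrange equation into a clean inequality that depends only on $\tilde{h}$ and $\rho^{2}$: the terms involving $\nabla H$, $\Delta^{\perp}H$, and the cross-products $H^{\alpha}\tilde{A}^{\alpha}$ must either be eliminated by the Willmore equation or be shown to assemble into a nonnegative contribution (possibly up to a divergence that disappears after integration). Once this reduction is complete, the final integration-and-rigidity step is essentially the same as in \cite{Sim68,CdCK70}.
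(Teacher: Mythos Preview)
Your overall strategy---Simons identity plus the Euler--Lagrange equation plus the Chern--do Carmo--Kobayashi algebraic estimate, then integrate---is the right one, and it is essentially what the paper does (as the special case $k=n/2$ of Theorem~\ref{thm_scal}). But the specific inequality you write down,
\[
\tfrac{1}{2}\Delta\rho^{2}\ \ge\ |\nabla\tilde h|^{2}+\rho^{2}\bigl(n-(2-\tfrac{1}{p})\rho^{2}\bigr),
\]
does \emph{not} hold pointwise for Willmore submanifolds, and no amount of bookkeeping will produce it. The Simons formula for $\tfrac12\Delta\rho^{2}$ leaves behind the second-order term $\sum_{\alpha}\square^{\alpha}(nH^{\alpha})$ together with $\Delta H^{2}$ and $|\nabla^{\perp}\mathbf H|^{2}$, and the Willmore equation \eqref{kextremaleq} gives you information about $\square^{\alpha}(\rho^{n-2})$ and $\Delta^{\perp}H^{\alpha}$, not about these. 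The two do not match pointwise.

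The resolution, which the paper carries out in Proposition~\ref{pinteq}, is to multiply the Simons identity by $\rho^{n-2}$ and integrate, then invoke the self-adjointness of the Cheng--Yau operator,
\[
\int_{M}\rho^{n-2}\,\square^{\alpha}(nH^{\alpha})=\int_{M} nH^{\alpha}\,\square^{\alpha}(\rho^{n-2}),
\]
so that the right-hand side can be read off directly from the Willmore equation. What emerges is a \emph{weighted} integral identity (equation~\eqref{inteq} with $k=n/2$), and after applying the algebraic estimate one obtains
\[
0\ \ge\ \int_{M}\rho^{n-2}|\nabla\tilde h|^{2}+(n-2)\int_{M}\rho^{n-2}|\nabla\rho|^{2}+\int_{M}\rho^{n}\Bigl(n-(2-\tfrac{1}{p})\rho^{2}\Bigr),
\]
from which the conclusion follows as you describe. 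Two further points you gloss over: (i) the cross-term $n\sum_{\alpha,\beta}\tilde\sigma_{\alpha\beta}H^{\alpha}H^{\beta}$ is not eliminated by the Willmore equation but bounded by $nH^{2}\rho^{2}$ via \eqref{H2rho2}, which then cancels against the $nH^{2}\rho^{2}$ coming from the curvature term---this cancellation is what makes the final inequality independent of $H$ and is special to $k=n/2$; (ii) in the equality case with $p\ge 2$, the rigidity is not quite the Chern--do Carmo--Kobayashi situation, since one must also show $H=0$ (again from equality in \eqref{H2rho2}) before quoting the minimal classification.
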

The Willmore tori are
\begin{equation}
	W_{m,n-m}:=\mathbb {S}^{m}(\sqrt{\frac{n-m}{n}})\times \mathbb {S}^{n-m}(\sqrt{\frac{m}{n}}),  \quad 1\le m \le n-1.
\end{equation}
An interesting fact is that a Willmore torus coincides with a Clifford torus if and only if $n$ is even and $m=n/2$.

When $k=1$, $F_k$ is firstly studied by Guo-Li \cite{GL07}, and the critical points are called extremal submanifolds. When $n=2$, $F_1$ becomes the Willmore functional. In \cite{GL07} the authors proved that, for a closed extremal submanifold $M^n$ in $\mathbb{S}^{n+p}$, if $\rho^2\le \frac{n}{2-1/p}$, then either $M$ is totally umbilical, the Clifford torus $C_{m,m}$ with $n=2m$, or the Veronese surface in $\mathbb{S}^4$ with $n=p=2$. Later, Wu \cite{Wu09} extended this rigidity result to $k$-extremal submanifolds for $1<k<n/2$.

When considering the global condition on $\|\rho^2\|_{n/2}$, Yang obtained rigidity theorems for Willmore submanifolds and extremal submanifolds in his doctoral's thesis \cite{Yan10} (also see \cite{XY11}), which were extended to the case $1<k<n/2$ by Guo-Wu \cite{GW15}.

In terms of sectional and Ricci curvatures, Shu \cite{Shu07} and Yang \cite{Yan10} respectively obtained pinching results for Willmore and extremal submanifolds under pointwise conditions; Han improved Shu's sectional curvature pinching bound in his master's thesis \cite{Han13}. Very recently Yang-Fu-Zhang \cite{YFZ22} obtained pinching theorem for both Willmore and extremal submanifolds under integral Ricci curvature condition.

In this paper, we study the rigidity of $k$-extremal submanifolds for general $k\ge 1$, and prove various pinching theorems under different curvature conditions.
Firstly, in terms of sectional curvatures, we prove the following results.
\begin{thm}\label{thm_sec}
	Let $M$ be an $n$-dimensional closed $k$-extremal submanifold in $\mathbb {S}^{n+p}$, $n\ge 2, k\ge 1$.
	If
	\begin{equation}\label{cond-sec}
		\sec^{M}\ge C_1(n,p,H,\rho,k),
	\end{equation}
	where
	\begin{equation}\label{bound-sec}
		C_1(n,p,H,\rho,k)=\frac{p\sgn(p-1)}{2(p+1)}+\frac{n-2}{\sqrt{n(n-1)}}H\rho + \Big(1-\big(\frac{n}{2k}-1\big)\big(1-\frac{p\sgn(p-1)}{2(p+1)}\big)\Big)H^2,
	\end{equation}
	 then  $M$ is either the totally umbilical sphere, or the Clifford torus $C_{m,m}$ in $\mathbb{S}^{n+1}$ with $n=2m$, or the Veronese surface in $\mathbb{S}^4$ with $n=p=2$.
\end{thm}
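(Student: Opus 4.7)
The plan is to combine the Euler--Lagrange equation for critical points of $F_k$ with the Simons-type identity for $\tfrac{1}{2}\Delta \rho^{2}$, and then to use the Gauss equation to convert the pointwise hypothesis \eqref{cond-sec} into control on the intrinsic curvature couplings that appear.

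First, I would recall the Euler--Lagrange equation for $k$-extremal submanifolds (available from Guo--Li, Wu, and Li for various ranges of $k$, and extendable to all $k\ge 1$). Contracting it with the traceless part of the second fundamental form and integrating over the closed manifold $M$ produces an identity of the schematic form
\begin{equation*}
\int_M \rho^{2k-2}\Bigl[\tfrac{1}{2}\Delta \rho^{2} + \bigl(\tfrac{n}{2k}-1\bigr)\,\Pi(h,H,\rho)\Bigr]\,=\,0,
\end{equation*}
where $\Pi$ collects correction terms polynomial in $H$ and $\rho$ arising from the $k$-extremal equation. I would then substitute Simons' formula, which expands $\tfrac{1}{2}\Delta \rho^{2}$ as a nonnegative gradient term plus a coupling with the intrinsic Ricci tensor of $M$ and the normal curvature tensor $R^{\perp}$, together with a quartic polynomial in the eigenvalues of the traceless second fundamental form. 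The quartic piece is controlled by the sharp Okumura inequality in codimension one and by the Li--Li inequality for $p\ge 2$; this step produces the cubic cross term $\frac{n-2}{\sqrt{n(n-1)}}H\rho^{3}$ and the quartic contribution $\frac{p\sgn(p-1)}{2(p+1)}\rho^{4}$.

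Next---the heart of the argument---I would invoke the Gauss equation to write the intrinsic Ricci of $M$ in terms of the ambient (unit) sectional curvature and $h$, and then apply the hypothesis $\sec^{M}\ge C_{1}$ to bound the curvature-coupling terms from below. The three summands of $C_{1}$ are calibrated precisely so that the $\frac{p\sgn(p-1)}{2(p+1)}$-piece absorbs the $R^{\perp}$-contribution (which collapses when $p=1$, where the normal bundle is trivial), the $\frac{n-2}{\sqrt{n(n-1)}}H\rho$-piece cancels the cubic cross term, and the $H^{2}$-piece offsets exactly the $(\tfrac{n}{2k}-1)$ correction from the Euler--Lagrange step. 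After these substitutions the integrand is a sum of nonnegative pieces (gradient square plus $\rho^{2k}\cdot Q$ with $Q\ge 0$ pointwise) with definite sign, and closedness of $M$ forces each piece to vanish identically.

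Vanishing of $Q\rho^{2k}$ yields either $\rho\equiv 0$ (the totally umbilical sphere) or equality throughout every pointwise inequality used. In the latter case the equality characterizations of Chern--do Carmo--Kobayashi and Li--Li pin down the second fundamental form: for $p=1$ one obtains a Clifford torus $C_{m,n-m}$, and the sectional curvature hypothesis together with the $k$-extremal condition restricts this to the balanced case $C_{m,m}$ with $n=2m$; for $p\ge 2$ one is forced into $n=p=2$ and obtains the Veronese surface. The main obstacle lies in the middle step: three independent sharp inequalities (Simons' curvature coupling, Okumura/Li--Li, and the algebraic consequence of the $k$-extremal equation) must be combined so that each surviving coefficient matches the corresponding summand of $C_{1}$, and the codimension dichotomy encoded by $\sgn(p-1)$ must be handled uniformly.
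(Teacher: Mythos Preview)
Your broad outline---integrate a Simons-type identity against $\rho^{2k-2}$ using the $k$-extremal Euler--Lagrange equation, control the resulting algebraic terms by sharp inequalities, then read off the rigidity from the equality cases---matches the paper's strategy. However, there is a real gap at the point you yourself flag as ``the main obstacle.''

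The quartic contribution with coefficient $\frac{p\sgn(p-1)}{2(p+1)}$ does \emph{not} come from the Li--Li inequality (Lemma~\ref{lem-LL}) as you claim; Li--Li gives the codimension-independent constant $\frac{3}{2}$ for $p\ge 2$, not the $p$-dependent $\frac{p}{2(p+1)}$. The paper obtains this sharper constant by Yau's parameter trick: in the integral identity \eqref{inteq} one writes the curvature coupling $\sum h_{ij}^{\alpha}(h_{kl}^{\alpha}R_{lijk}+h_{li}^{\alpha}R_{lkjk})$ as $(a+1)$ times itself minus $a$ times its Gauss-equation expansion \eqref{Iitem}, with $a\in[0,1]$ free. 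The $(a+1)$-copy is bounded below by $n\rho^{2}K_{\min}$ directly via the diagonalization identity $\frac{1}{2}\sum_{i,k}(\mu_i^\alpha-\mu_k^\alpha)^2 R_{ikik}$---no passage through the Gauss equation or Ricci tensor is needed here. For the expanded pieces one uses the DDVV inequality for $\sum N([\tilde A_\alpha,\tilde A_\beta])$ and the Cauchy--Schwarz bound $\sum\tilde\sigma_{\alpha\beta}^{2}\ge \rho^{4}/p$ \emph{separately}; the choice $a=\frac{p}{p+2}\sgn(p-1)$ then makes the net $\rho^{2k+2}$-coefficient vanish, and the constant $\frac{a}{a+1}=\frac{p\sgn(p-1)}{2(p+1)}$ drops out as the first summand of $C_{1}$. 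Without this parameter device your scheme cannot produce the stated bound.

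A secondary point: in the equality analysis for $p\ge 2$, it is the DDVV equality case (forcing essentially $p=2$) together with equality in $\sum_{\alpha,\beta}H^{\alpha}H^{\beta}\tilde\sigma_{\alpha\beta}\le H^{2}\rho^{2}$ (forcing $H=0$ when $\rho^{2}>0$) that reduces one to a minimal submanifold, after which a rigidity theorem of Gu--Xu applies; invoking Chern--do~Carmo--Kobayashi/Li--Li equality alone does not directly force $n=p=2$. For $p=1$, the paper first uses Lawson's classification of hypersurfaces with parallel second fundamental form and then the explicit description of $k$-extremal tori (Theorem~\ref{thm-iso-2}) to exclude all but $C_{m,m}$.
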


\begin{thm}\label{thm_sec-n}
	Let $M$ be an $n$-dimensional closed $k$-extremal submanifold in $\mathbb {S}^{n+p}$, $n\ge 2, k\ge 1$.
	If
	\begin{equation}\label{cond-sec-n}
		\sec^{M}\ge C_1'(n,H,\rho,k),
	\end{equation}
	where
	\begin{equation}\label{bound-sec-n}
		C_1'(n,H,\rho,k)=\frac{n}{2(n+1)}+\frac{n-2}{\sqrt{n(n-1)}}H\rho + \Big(1-\big(\frac{n}{2k}-1\big)\frac{n+2}{2(n+1)}\Big)H^2,
	\end{equation}
	then  $M$ is either the totally umbilical sphere, or the Veronese submanifold.
\end{thm}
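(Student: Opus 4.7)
The plan is to combine the Euler-Lagrange equation satisfied by a $k$-extremal submanifold with a Simons-type formula for the traceless second fundamental form $\Phi$, and then close the argument by inserting a codimension-free algebraic estimate in place of the DDVV-type bound used for Theorem \ref{thm_sec}. The overall template is identical to that of Theorem \ref{thm_sec}; the only substantive change is that the quartic estimate produces the constant $\tfrac{n}{2(n+1)}$ rather than $\tfrac{p\sgn(p-1)}{2(p+1)}$, which is exactly the shift that replaces $\tfrac{p\sgn(p-1)}{2(p+1)}$ by $\tfrac{n}{2(n+1)}$ in passing from $C_1$ to $C_1'$, and which moves the extremal geometry from a Clifford torus to a Veronese submanifold.

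First I would record the Euler-Lagrange equation characterizing critical points of $F_k=\int_M \rho^{2k}$, as derived in \cite{GL07, Wu09}. Its pointwise form obtained from the first variation is schematically
\begin{equation*}
\rho^{2k-2}\bigl(\Delta^\perp H + \text{algebraic terms}\bigr) + 2(k-1)\rho^{2k-4}\nabla\rho^2\cdot\nabla^\perp H = 0.
\end{equation*}
Next I would invoke the Simons identity for $\Delta\rho^2$ on a submanifold of $\mathbb{S}^{n+p}$, multiply by $\rho^{2k-2}$, integrate over $M$, and use integration by parts together with the Euler-Lagrange identity to absorb the $\nabla^2 H$ terms. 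The outcome is an integral inequality of the schematic form
\begin{equation*}
\int_M \rho^{2k-2}\Bigl\{\rho^2\bigl(\sec^M - C_1'(n,H,\rho,k)\bigr) + \bigl(\tfrac{n+2}{2(n+1)}\rho^4 - \mathcal{Q}\bigr)\Bigr\}\, dV \le 0,
\end{equation*}
where $\mathcal{Q}$ collects the quartic self-coupling $\sum_{\alpha,\beta}\|[A_\alpha,A_\beta]\|^2$ together with the cubic coupling $H\tr(\Phi^3)$.

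The main obstacle, and the only step where this proof diverges from that of Theorem \ref{thm_sec}, is the choice of the pointwise algebraic bound on $\mathcal{Q}$. The estimate used for Theorem \ref{thm_sec} produces the codimension-dependent constant $\tfrac{p\sgn(p-1)}{2(p+1)}$ and is saturated by a Clifford algebra of shape operators. To match the coefficient $\tfrac{n+2}{2(n+1)}$ in \eqref{bound-sec-n}, I would instead use the sharper codimension-free refinement of the DDVV-type inequality (in the spirit of \cite{Shu07, Yan10, Han13} for Willmore and extremal submanifolds), whose extremal algebraic configuration is precisely that of the Veronese shape operators. Under this estimate and the hypothesis $\sec^M \ge C_1'$, both bracketed quantities are non-negative, forcing the integrand to vanish identically.

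The final step is the equality analysis. Either $\rho \equiv 0$, in which case $M$ is totally umbilical and hence a round sphere, or every auxiliary inequality is a pointwise equality, forcing $\nabla h \equiv 0$, constant sectional curvature, vanishing mean curvature (via the $k$-extremal variational identity, which at $H\equiv 0$ reduces to the minimal case), and shape operators realizing the Veronese algebraic model. The classical rigidity of such submanifolds going back to \cite{CdCK70} then identifies $M$ as the Veronese submanifold of $\mathbb{S}^{n+p}$.
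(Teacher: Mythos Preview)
Your overall plan is right: the proof runs parallel to that of Theorem~\ref{thm_sec}, with one algebraic inequality swapped out. But you have not identified the correct replacement, and your description of it as a ``codimension-free refinement of the DDVV-type inequality'' mischaracterizes the mechanism enough that the argument cannot be completed as stated.

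The inequality actually used is Itoh's estimate (Lemma~\ref{lem-Itoh} in the paper):
\[
\sum_{\alpha,\beta} N\bigl([\tilde A_\alpha,\tilde A_\beta]\bigr)\;\le\; n\sum_{\alpha,\beta}\tilde\sigma_{\alpha\beta}^{\,2}.
\]
This is \emph{not} a bound of the commutator term by $\rho^4$; it compares the two quartic invariants to each other. In the proof one keeps Yau's parameter $a$ in \eqref{parameter} and, instead of invoking \eqref{5} and \eqref{3} separately, feeds Itoh's inequality into the term $a\sum_{\alpha,\beta}\tilde\sigma_{\alpha\beta}^{\,2}$ to obtain a combined coefficient $\tfrac{a}{n}-\tfrac{1-a}{2}$ in front of $\sum N([\cdot,\cdot])$. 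Choosing $a=\tfrac{n}{n+2}$ kills this coefficient, and the surviving term $-an\int\rho^{2k}$ contributes $\tfrac{a}{a+1}=\tfrac{n}{2(n+1)}$ to the constant, yielding exactly $C_1'$. Your schematic integrand with a residual $\tfrac{n+2}{2(n+1)}\rho^4-\mathcal{Q}$ does not reflect this cancellation and would not follow from any direct pointwise bound $\mathcal{Q}\le c\,\rho^4$.

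Two further gaps in the equality discussion. First, the classification you need is Itoh's theorem \cite{Ito75} characterizing minimal submanifolds of $\mathbb S^{n+p}$ with $\sec^M\ge\tfrac{n}{2(n+1)}$ as Veronese submanifolds; \cite{CdCK70} only treats the Veronese surface in $\mathbb S^4$. Second, you must separately dispose of the codimension-one case: the Clifford torus $C_{m,m}$ that appears in Theorem~\ref{thm_sec} is excluded here because it has $K_{\min}=0$ while $C_1'\ge\tfrac{n}{2(n+1)}>0$, so it violates the hypothesis \eqref{cond-sec-n}.
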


\begin{cor}\label{cor_sec}
	Let $M$ be an $n$-dimensional closed $k$-extremal submanifold in $\mathbb {S}^{n+p}$, $n\ge 2, k\ge 1$.
	If
	\begin{equation*}
		\sec^{M}>C_1(n,p,H,\rho,k) \mbox{ or } \sec^{M}>C_1'(n,H,\rho,k),
	\end{equation*}
	then $M$ is totally umbilical.
\end{cor}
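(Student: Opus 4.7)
The plan is to derive Corollary \ref{cor_sec} directly from Theorems \ref{thm_sec} and \ref{thm_sec-n}: since those theorems classify the admissible $M$ into a totally umbilical sphere together with a short list of non-umbilical models, it suffices to check that each non-umbilical model saturates the corresponding pinching bound, so that the strict version of the hypothesis rules it out.

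First I would apply Theorem \ref{thm_sec} (respectively Theorem \ref{thm_sec-n}) under the strict hypothesis $\sec^M > C_1$ (respectively $\sec^M > C_1'$); this forces $M$ to be the totally umbilical sphere, the Clifford torus $C_{m,m}$ with $n=2m$ in $\mathbb{S}^{n+1}$, or the Veronese surface in $\mathbb{S}^4$ (respectively the totally umbilical sphere or a Veronese submanifold of $\mathbb{S}^n$ of constant sectional curvature $n/(2(n+1))$). The remainder of the argument is then a pointwise check on each non-umbilical model.

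Second, I would compute both sides of the pinching inequality on each model, using that all of them are minimal ($H\equiv 0$). For $C_{m,m}\subset \mathbb{S}^{n+1}$ we have $p=1$, hence $\sgn(p-1)=0$, and $\rho^2\equiv n$, so that $C_1\equiv 0$; the product structure yields vanishing sectional curvature along any mixed tangent plane, giving $\inf_M \sec^M = 0 = C_1$. For the Veronese surface ($n=p=2$, $\rho^2\equiv 4/3$) a direct substitution gives $C_1\equiv 1/3$, which matches its constant intrinsic curvature $1/3$. For the Veronese submanifold, $H\equiv 0$ and $\rho^2\equiv n(n-1)(n+2)/(2(n+1))\cdot$ (etc.), so that $C_1'\equiv n/(2(n+1))=\sec^M$. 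In each of the three non-umbilical cases the strict bound fails at some point, contradicting the hypothesis, and only the totally umbilical sphere survives.

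The only mild obstacle is keeping track of which terms of $C_1, C_1'$ survive when $H\equiv 0$; in particular the $k$-dependent coefficient of $H^2$ drops out for all three minimal models, which is precisely why the conclusion is uniform in $k\ge 1$, and why no separate analysis is required for the extremal ($k=1$), Willmore ($k=n/2$), or intermediate regimes.
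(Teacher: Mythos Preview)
Your approach is correct and is precisely the implicit derivation the paper intends: Corollary~\ref{cor_sec} is stated immediately after Theorems~\ref{thm_sec} and~\ref{thm_sec-n} as their direct consequence, and your check that each non-umbilical model ($C_{m,m}$, the Veronese surface, the Veronese submanifold) is minimal and satisfies $\sec^M \equiv C_1$ (resp.\ $C_1'$) with equality is exactly what is needed. The paper also remarks (comment~(1) after the theorem statements) that for $n\ge 3$ the corollary can alternatively be recovered from the integral pinching Theorems~\ref{thm_sec_int} and~\ref{thm_sec_int-n} by taking $\Phi$ just above the bound, but your route via the pointwise classification is the primary one and covers $n=2$ as well.
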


For integral sectional curvatures, we have

\begin{thm}\label{thm_sec_int}
	Let $M$ be an $n$-dimensional closed $k$-extremal submanifold in $\mathbb {S}^{n+p}$, $n\ge 3, k\ge 1$.
	Given a function $\Phi$ on $M$ such that $\Phi>C_1(n,p,H,\rho,k)$. Then there exists a constant $\epsilon_1(n,p,H,k,\Phi)$ such that if
	\begin{equation*}
		\|\besec\|_{n/2}< \epsilon_1(n,p,H,k,\rho,\Phi),
	\end{equation*}
	then $M$ is totally umbilical.
\end{thm}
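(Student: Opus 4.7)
The plan is to adapt the pointwise argument of Theorem~\ref{thm_sec} to the integral setting by combining the Simons--Li inequality for $k$-extremal submanifolds with the Sobolev/H\"older scheme that Chen--Wei used to establish Theorem~\ref{thm-CW} (the same idea for sectional curvature is carried out in \cite{Zhu19} for minimal submanifolds). I would first revisit the proof of Theorem~\ref{thm_sec} and extract from it the pointwise differential inequality that drives it: using the Euler--Lagrange equation of $F_k$ together with the Bochner--Simons formula and a refined Kato inequality, one obtains
\begin{equation*}
\tfrac{1}{2}\Delta\rho^{2k}\ \geq\ 2k\alpha(n)\,\rho^{2k-2}|\nabla\rho|^2+2kA(n,p,H,\rho,k)\,\rho^{2k}\bigl(\sec^{M}-C_1(n,p,H,\rho,k)\bigr),
\end{equation*}
where $\alpha(n)>0$ comes from the refined Kato inequality and $A\ge 0$ vanishes only at umbilical points. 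Integration over $M$ under the pointwise bound $\sec^M\ge C_1$ recovers Theorem~\ref{thm_sec}.

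To handle the integral hypothesis, I would use the pointwise decomposition $\sec^M\ge \Phi-\besec$, integrate the Simons--Li inequality on $M$, and rearrange, obtaining
\begin{equation*}
\int_M A\rho^{2k}(\Phi-C_1)+\alpha(n)\int_M\rho^{2k-2}|\nabla\rho|^2\ \leq\ \int_M A\rho^{2k}\,\besec.
\end{equation*}
The right-hand side is then estimated by H\"older and the Hoffman--Spruck Sobolev inequality on $M\subset \S$ applied to $u=\rho^{k}$:
\begin{equation*}
\int_M\rho^{2k}\,\besec\ \leq\ \|\besec\|_{n/2}\Bigl(\int_M u^{\tfrac{2n}{n-2}}\Bigr)^{\!\tfrac{n-2}{n}}\ \leq\ c_S(n)\|\besec\|_{n/2}\int_M\bigl(k^{2}\rho^{2k-2}|\nabla\rho|^2+(1+H^2)\rho^{2k}\bigr).
\end{equation*}
Once $\|\besec\|_{n/2}$ is small enough compared to $\alpha(n)/(k^{2}c_S(n))$, the gradient term on the right can be absorbed into the left, leaving an estimate of the form
\begin{equation*}
\bigl(\inf_M A(\Phi-C_1)\bigr)\int_M\rho^{2k}\ \leq\ c(n,p,H,k,\rho,\Phi)\,\|\besec\|_{n/2}\int_M\rho^{2k}.
\end{equation*}
A further shrinking of $\epsilon_1(n,p,H,k,\rho,\Phi)$ makes the right-hand side strictly smaller than the left whenever $\rho\not\equiv 0$, forcing $\rho\equiv 0$ and hence $M$ totally umbilical.

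The main obstacle I foresee is producing the Simons--Li inequality above explicitly for all $k\ge 1$ with precisely the constant $C_1(n,p,H,\rho,k)$ of Theorem~\ref{thm_sec}: the higher-order $\Delta\rho^{2k-2}$ term arising from the Euler--Lagrange equation for $F_k$ must be disposed of by a careful integration by parts, and the Kato coefficient $\alpha(n)$ must be strictly positive so that the Sobolev-absorption step in the previous paragraph actually closes. Once these points are pinned down, the dependence of $\epsilon_1$ on $(n,p,H,k,\rho,\Phi)$ is explicit through $\alpha(n)$, $c_S(n)$, and the $L^\infty$ norms of $H,\rho,\Phi$.
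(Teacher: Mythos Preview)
Your overall strategy---combine the key inequality from the proof of Theorem~\ref{thm_sec} with the Kato/Sobolev/H\"older machinery of Proposition~\ref{prop-int} and Lemma~\ref{gradhrho}---is exactly the paper's approach, and your absorption argument would close. Two points, however, deserve correction.

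First, the starting inequality is \emph{not} a pointwise differential inequality for $\Delta\rho^{2k}$; it is the integral inequality \eqref{pointk} already derived in the proof of Theorem~\ref{thm_sec}:
\[
0\ \ge\ \int_M\rho^{2k-2}|\nabla\tilde h|^2+(2k-2)\int_M\rho^{2k-2}|\nabla\rho|^2+(a+1)n\int_M\rho^{2k}\bigl(K_{\min}-C_1\bigr),
\]
with $a=\tfrac{p}{p+2}\sgn(p-1)$. Its derivation uses the self-adjointness of the operators $\square^\alpha$ (Proposition~\ref{pinteq}), so the ``main obstacle'' you foresee---producing a pointwise Simons--Li formula---simply does not arise: you may and should work directly at the integrated level. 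Relatedly, the factor you call $A$ is the constant $(a+1)n$, not a function vanishing at umbilical points.

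Second, the paper deploys Sobolev on the \emph{left} rather than the right: via \eqref{ineqrho} one replaces the two gradient integrals by $\tfrac{2kn-n+2}{nk^2}\bigl(\mathfrak A(n,t)\|\rho^{2k}\|_{n/(n-2)}-\mathfrak B(n,t)\int(1+H^2)\rho^{2k}\bigr)$, then uses $K_{\min}-C_1\ge\delta-\besec$ with $\delta=\Phi-C_1>0$ and H\"older to get
\[
0\ \ge\ \|\rho^{2k}\|_{n/(n-2)}\Bigl(\tfrac{2kn-n+2}{nk^2}\mathfrak A(n,t_1)-(a+1)n\,\|\besec\|_{n/2}\Bigr),
\]
after choosing $t_1$ so that the $\mathfrak B$-term is exactly cancelled by the $\delta$-term. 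This yields an explicit $\epsilon_1=\tfrac{2kn-n+2}{(a+1)n^2k^2}\mathfrak A(n,t_1)$. Your variant---H\"older first, then Sobolev to bound $\|\rho^{2k}\|_{n/(n-2)}$ from above and absorb the gradient---also works, but the paper's ordering avoids the extra absorption step and makes $\epsilon_1$ cleaner.
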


\begin{thm}\label{thm_sec_int-n}
	Let $M$ be an $n$-dimensional closed $k$-extremal submanifold in $\mathbb {S}^{n+p}$, $n\ge 3, k\ge 1$.
	Given a function $\Phi$ on $M$ such that $\Phi>C_1'(n,H,\rho,k)$. Then there exists a constant $\epsilon_1'(n,H,k,\Phi)$ such that if
	\begin{equation*}
		\|\besec\|_{n/2}< \epsilon_1'(n,H,k,\rho,\Phi),
	\end{equation*}
	then $M$ is totally umbilical.
\end{thm}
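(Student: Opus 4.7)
The plan is to upgrade the pointwise argument behind Theorem~\ref{thm_sec-n} to an integral one, following the framework of Petersen-Wei as adapted to the submanifold setting in \cite{CW21} and \cite{Zhu19}. The starting point is the Simons-type inequality for $k$-extremal submanifolds that must underpin the (earlier) proof of Theorem~\ref{thm_sec-n}: after the Euler-Lagrange equation of $F_k$ is substituted into Simons' identity and a Kato-type inequality is applied, it takes the schematic form
\[
\tfrac{1}{2}\Delta\rho^{2} \;\geq\; c_0(n,k)\,|\nabla\rho|^{2} \;+\; n\rho^{2}\bigl(K_{\min} - C_1'(n,H,\rho,k)\bigr).
\]
Under the pointwise hypothesis $\sec^M \geq C_1'$ the right-hand side is nonnegative; the integral version must absorb a small $L^{n/2}$-deficit of the sectional curvature below the function $\Phi$.

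The argument then proceeds in three steps. First, decompose $K_{\min} - C_1' = (\Phi-C_1') - \besec + (K_{\min}-\Phi)_+$, drop the nonnegative last term, multiply by $\rho^{q-2}$ for a suitable exponent $q\geq 2$, and integrate over the closed $M$. Integration by parts yields
\[
c_1(n,k,q)\int_M \bigl|\nabla\rho^{q/2}\bigr|^{2} \;+\; n(\Phi-C_1')\int_M \rho^{q} \;\leq\; n\int_M \besec\,\rho^{q}.
\]
Second, apply the Hoffman-Spruck Sobolev inequality to $u=\rho^{q/2}$ and then Hölder's inequality to the right-hand side, giving
\[
\int_M \besec\,\rho^{q} \;\leq\; \|\besec\|_{n/2}\Bigl(\int_M \rho^{qn/(n-2)}\Bigr)^{\!(n-2)/n} \;\leq\; A(n)\,\|\besec\|_{n/2}\int_M \bigl(|\nabla\rho^{q/2}|^{2}+H^{2}\rho^{q}\bigr).
\]
Third, since the hypothesis makes $\Phi - C_1'(n,H,\rho,k)$ uniformly positive on $M$, choosing $\epsilon_1'$ small enough relative to $c_1$, $A(n)$, and the pointwise gap $\Phi-C_1'$ allows the $\|\besec\|_{n/2}$-term to be absorbed into the left-hand side, forcing $\int_M \rho^{q}\leq 0$ and hence $\rho\equiv 0$, i.e.\ $M$ is totally umbilical.

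The main obstacle is the bookkeeping of the dependence of $\epsilon_1'$ on $H$ and $\rho$: both enter nonlinearly in $C_1'$, so the hypothesis $\Phi > C_1'(n,H,\rho,k)$ must be converted into a uniform positive lower bound, which is why the threshold in the theorem is allowed to depend on $H, \rho, \Phi$ (and on the extremality index $k$). A secondary technical point arises for $1<k<n/2$: the Euler-Lagrange equation of $F_k$ contributes higher-order derivatives of $\rho^{2k}$ that must be handled by integration by parts and Cauchy-Schwarz before the Sobolev step can be applied. This is the point at which the proof departs from the Willmore ($k=n/2$) and simple extremal ($k=1$) cases previously treated in \cite{YFZ22}, and it is what should dictate the precise form of $c_1(n,k,q)$ and, consequently, of the explicit bound $\epsilon_1'$.
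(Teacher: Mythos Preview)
Your overall strategy---combine the key sectional-curvature estimate from Theorem~\ref{thm_sec-n} with H\"older and the Hoffman--Spruck Sobolev inequality, then absorb the $\|\besec\|_{n/2}$ term---is exactly what the paper does. However, your starting point is mischaracterized, and this matters. There is \emph{no} pointwise inequality of the form $\tfrac{1}{2}\Delta\rho^{2}\geq c_{0}|\nabla\rho|^{2}+n\rho^{2}(K_{\min}-C_{1}')$ for $k$-extremal submanifolds: the Euler--Lagrange equation~\eqref{kextremaleq} involves $\square^{\alpha}(\rho^{2k-2})$, $\Delta(\rho^{2k-2})$, and $\Delta^{\perp}H^{\alpha}$, and these can only be eliminated via the self-adjointness of $\square^{\alpha}$ after integration. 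The correct starting point is the \emph{integral} inequality~\eqref{pointk'} (the analogue of~\eqref{pointk} with $a=\tfrac{n}{n+2}$), namely
\[
0\;\geq\;\int_{M}\rho^{2k-2}|\nabla\tilde h|^{2}+(2k-2)\int_{M}\rho^{2k-2}|\nabla\rho|^{2}+(a+1)n\int_{M}\rho^{2k}\bigl(K_{\min}-C_{1}'\bigr),
\]
which already carries the weight $\rho^{2k-2}$. Consequently your exponent $q$ is not a free parameter but forced to equal $2k$, and your ``secondary technical point'' about higher-order derivatives of $\rho^{2k}$ for $1<k<n/2$ is a phantom: those terms are already absorbed in Proposition~\ref{pinteq}, uniformly in $k\geq 1$.

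Once you replace your schematic pointwise inequality by~\eqref{pointk'}, the rest of your outline runs parallel to the paper's proof of Theorem~\ref{thm_sec_int}: apply~\eqref{ineqrho} (Kato plus Sobolev) to the gradient terms to produce $\mathfrak{A}(n,t)\|\rho^{2k}\|_{n/(n-2)}-\mathfrak{B}(n,t)\int_{M}(1+H^{2})\rho^{2k}$, use $\besec-\delta\geq C_{1}'-K_{\min}$ with $\delta=\Phi-C_{1}'>0$ together with H\"older on $\int_{M}\besec\,\rho^{2k}$, and choose $t$ so that the two $\int_{M}\rho^{2k}$ terms cancel. What remains is $0\geq\|\rho^{2k}\|_{n/(n-2)}\bigl(\tfrac{2kn-n+2}{nk^{2}}\mathfrak{A}(n,t)-(a+1)n\|\besec\|_{n/2}\bigr)$, giving the explicit $\epsilon_{1}'$. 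Note the paper applies Sobolev to the gradient side (bounding it below by $\|\rho^{2k}\|_{n/(n-2)}$) rather than, as you sketch, to the curvature side; this is cleaner because it avoids re-introducing an $H^{2}\rho^{q}$ term that would then need a separate absorption.
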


Next, in terms of Ricci curvature, we obtain the following results.
\begin{thm}\label{thm_ricci}
	Let $M$ be an $n$-dimensional closed $k$-extremal submanifold in $\mathbb {S}^{n+p}$, $n\ge 4, k\ge 1$.
	If
	\begin{equation}\label{cond-ric}
	\Ric^{M}\ge C_2(n, H, \rho, k),
	\end{equation}
	where
	\begin{equation}\label{bound-ricci}
		C_2(n, H, \rho, k) = (n-2)+\frac{(n-2)^2}{\sqrt{n(n-1)} } H\rho +n(1-\frac{1}{2k})H^2,
	\end{equation}
	then $M$ is either the totally umbilical sphere, or the Clifford torus $C_{m,m}$ in $\mathbb{S}^{n+1}$ with $n=2m$, or the complex projective space $\mathbb{C}P^2_{4/3}$ in $\mathbb{S}^{7}$.
\end{thm}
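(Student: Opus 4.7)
My plan is to follow the template established by Shu \cite{Shu07} and Yang \cite{Yan10} for Willmore and extremal submanifolds under pointwise Ricci pinching, and to extend their argument to arbitrary $k\ge 1$. Using moving frames, I would introduce the traceless second fundamental form $\phi^\alpha_{ij}=h^\alpha_{ij}-H^\alpha\delta_{ij}$, so that $|\phi|^2=\rho^2$, and first derive (or quote from Wu \cite{Wu09} and Guo--Wu \cite{GW15}) the Euler--Lagrange equation for the functional $F_k=\int_M\rho^{2k}$. This is a fourth-order PDE whose leading term involves $\Delta(\rho^{2k-2}H^\alpha)$; contracting it with suitable combinations of $h^\alpha$ and integrating over $M$ produces an integral identity that cancels the Hessian-of-$H$ contribution appearing in the Simons formula below.

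Next I would apply the standard Simons identity in $\mathbb{S}^{n+p}$,
\begin{equation*}
\tfrac{1}{2}\Delta|\phi|^2 = |\nabla h|^2 + \sum_{i,j,\alpha}\phi^\alpha_{ij}(\Delta h)^\alpha_{ij},
\end{equation*}
expanding $(\Delta h)^\alpha_{ij}$ via the Codazzi and Ricci identities so that the Ricci tensor of $M$ and the Hessian of $H^\alpha$ appear explicitly. Multiplying by $\rho^{2k-2}$, integrating over $M$, and using the $k$-extremality identity of the previous step to eliminate the $\bn^2 H$ terms, I would obtain an integral inequality
\begin{equation*}
0 \ge \int_M \rho^{2k-2}\Bigl(|\nabla h|^2 + \sum_{i,j,l,\alpha} R_{ij}\phi^\alpha_{il}\phi^\alpha_{jl} - \mathcal{Q}(\phi,H)\Bigr),
\end{equation*}
where $\mathcal{Q}(\phi,H)$ collects the Simons quadratic $\sum_{\alpha,\beta}\bigl(|[\phi^\alpha,\phi^\beta]|^2 + (\tr\phi^\alpha\phi^\beta)^2\bigr)$ together with the $H\phi$ cross terms arising from the Gauss and Ricci equations.

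The Gauss formula $R_{ij}=(n-1)\delta_{ij}+nH^\alpha h^\alpha_{ij}-\sum_{l,\alpha} h^\alpha_{il}h^\alpha_{lj}$ combined with the hypothesis \eqref{cond-ric} gives the pointwise bound $\sum R_{ij}\phi^\alpha_{il}\phi^\alpha_{jl}\ge C_2(n,H,\rho,k)\rho^2$ (viewing each $\phi^\alpha(\phi^\alpha)^T$ as a positive semidefinite matrix and pairing it with $\Ric$). For $\mathcal{Q}(\phi,H)$, the Li--Li inequality supplies $\sum_{\alpha,\beta}\bigl(|[\phi^\alpha,\phi^\beta]|^2+(\tr\phi^\alpha\phi^\beta)^2\bigr)\le \tfrac{3}{2}\rho^4$, and a carefully chosen Cauchy--Schwarz bound on the $H\phi$ cross-terms produces exactly the factors $\tfrac{(n-2)^2}{\sqrt{n(n-1)}}H\rho$ and $n(1-\tfrac{1}{2k})H^2$ appearing in \eqref{bound-ricci}. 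The constant $C_2$ is tuned so that the integrand is nonnegative, forcing $\rho\equiv 0$ unless equality holds everywhere.

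If equality holds, one must have $\nabla h\equiv 0$ and the Li--Li inequality saturated with a specific eigenvalue configuration for $\phi$; the classification of parallel submanifolds of $\mathbb{S}^{n+p}$ saturating the appropriate Simons-type bound (as used in Shu \cite{Shu07} and Yang \cite{Yan10}) then identifies $M$ with either the Clifford torus $C_{m,m}\subset\mathbb{S}^{n+1}$ with $n=2m$ or the complex projective space $\mathbb{C}P^2_{4/3}\subset \mathbb{S}^7$, which are precisely the parallel submanifolds whose Ricci tensors are compatible with equality in the previous step. I expect the main technical obstacle to be producing the sharp cross-term coefficient $\tfrac{(n-2)^2}{\sqrt{n(n-1)}}$: it emerges from simultaneously diagonalizing $h^1$ in the mean-curvature direction and carefully balancing eigenvalue inequalities, and only one Cauchy--Schwarz grouping yields this constant. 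A secondary obstacle is the integration by parts of the weight $\rho^{2k-2}$, which creates $|\nabla\rho|$ terms that must be absorbed into $\int\rho^{2k-2}|\nabla h|^2$ via Kato's inequality $|\nabla\rho|^2\le |\nabla\phi|^2$; handling this absorption uniformly for all $k\ge 1$ is what distinguishes the general case from the special cases $k=1$ and $k=n/2$ treated in prior work.
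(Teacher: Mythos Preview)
Your overall architecture---derive the weighted integral identity from the Euler--Lagrange equation, combine it with a Simons-type computation, and then classify the equality case---matches the paper. The gap is in the step where you estimate the ``Simons quadratic.'' You propose to bound
\[
\sum_{\alpha,\beta}\Bigl(N([\tilde A_\alpha,\tilde A_\beta])+\tilde\sigma_{\alpha\beta}^2\Bigr)\le \tfrac{3}{2}\rho^4
\]
via the Li--Li inequality and then pair this against the Ricci term $\sum R_{ij}\phi^\alpha_{il}\phi^\alpha_{jl}\ge C_2\rho^2$. But the resulting integrand is of the form $C_2\rho^2-\tfrac{3}{2}\rho^4+(\text{cross terms})$, and no choice of the \emph{constant} $C_2$ in \eqref{bound-ricci} makes this nonnegative for all $\rho$; what the Li--Li route actually proves is the scalar-curvature pinching $\rho^2\le\frac{2}{3}\bigl(n+n(\tfrac{n}{2k}-1)H^2\bigr)$, i.e.\ Theorem~\ref{thm_scal}, not Theorem~\ref{thm_ricci}. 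The sentence ``the constant $C_2$ is tuned so that the integrand is nonnegative'' is precisely where the argument breaks.

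The paper instead uses Ejiri's trick: after expanding the curvature term fully via the Gauss equation, one bounds the commutator term \emph{linearly} in $\rho^2$ by re-inserting the Gauss equation for $R_{ii}$. Concretely, for fixed $\alpha$ one diagonalises $\tilde A_\alpha$ and uses
\[
\sum_{j}\sum_{\beta\ne\alpha}(\tilde h^\beta_{ij})^2\le (n-1)(1+H^2)+(n-2)\sum_\gamma H^\gamma\tilde h^\gamma_{ii}-(\mu^\alpha_i)^2-\Ric_{\min}
\]
to get $\sum_{\alpha,\beta}N([\tilde A_\alpha,\tilde A_\beta])\le 4\bigl((n-1)(1+H^2)-\Ric_{\min}\bigr)\rho^2+\tfrac{4(n-2)^2}{\sqrt{n(n-1)}}H\rho^3-\tfrac{4}{n}\sum_\alpha N(\tilde A_\alpha)^2$. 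This, together with \eqref{5} and the Gauss bound $\Ric_{\min}\le (n-\rho^2)/n+\tilde C_2$ (which handles the $\tfrac{n-4}{n}\rho^2(n-\rho^2)$ residue and is where $n\ge4$ enters), yields $n\rho^2-\sum\tilde\sigma^2_{\alpha\beta}-\sum N\ge n\rho^2(\Ric_{\min}-\tilde C_2)$, so the integrand becomes $n\rho^{2k}(\Ric_{\min}-C_2)\ge0$ under the hypothesis. The rigidity step then appeals to Ejiri's classification (for $n\ge5$, after first forcing $H=0$) and to Xu--Gu \cite{XG13} (for $n=4$), not to the equality case of Li--Li; your proposed saturation analysis would therefore also need to be replaced.
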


\begin{cor}\label{cor_ric}
	Let $M$ be an $n$-dimensional closed $k$-extremal submanifold in $\mathbb {S}^{n+p}$, $n\ge 4, k\ge 1$.
	If
	\begin{equation*}
		\Ric^{M}>C_2(n,H,\rho,k),
	\end{equation*}
	then $M$ is totally umbilical.
\end{cor}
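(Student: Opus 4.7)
The plan is to deduce Corollary \ref{cor_ric} directly from Theorem \ref{thm_ricci}. The strict hypothesis $\Ric^M > C_2(n,H,\rho,k)$ a fortiori implies the non-strict bound $\Ric^M \ge C_2(n,H,\rho,k)$, so Theorem \ref{thm_ricci} identifies $M$ with one of three model spaces: a totally umbilical sphere, the Clifford torus $C_{m,m}\subset \mathbb{S}^{n+1}$ with $n=2m$, or the complex projective plane $\mathbb{C}P^2_{4/3}\subset \mathbb{S}^{7}$. The content of the corollary is then to rule out the latter two models by checking that each saturates the bound $C_2$ at some point, which is incompatible with the assumed strict inequality.

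For $C_{m,m}$ the embedding is minimal, so $H=0$ and $\rho^2 = S = n$. The principal curvatures are $\pm 1$, and a short Gauss-equation computation yields sectional curvature $2$ within each spherical factor and $0$ for mixed planes, so $\Ric\equiv (m-1)\cdot 2 = n-2$ is constant. Substituting $H=0$ into \eqref{bound-ricci} gives $C_2(n,0,\sqrt{n},k)=n-2$, so $\Ric\equiv C_2$ on $C_{m,m}$, contradicting $\Ric > C_2$.

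For $\mathbb{C}P^2_{4/3}$ (so $n=4$, $p=3$), the embedding is again minimal and the submanifold is K\"ahler--Einstein with constant Ricci curvature $\Ric\equiv 2$; a Gauss-equation computation gives $\rho^2 = S = 4$ and hence $C_2(4,0,2,k)=2$, again saturating the bound. Combining the two observations forces $M$ to be totally umbilical.

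The argument is essentially a consistency check and offers no genuine obstacle; the only nontrivial point is the verification that the two non-umbilical models appearing in Theorem \ref{thm_ricci} realize equality in \eqref{cond-ric}, which is what one expects of a borderline pinching result. One could alternatively phrase the corollary as a direct consequence of Theorem \ref{thm_ricci} together with the standard curvature data for $C_{m,m}$ and $\mathbb{C}P^2_{4/3}$ recalled above.
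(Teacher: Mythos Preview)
Your argument is correct. You invoke Theorem \ref{thm_ricci} under the weakened hypothesis and then eliminate the two non-umbilical models by checking that each realizes equality $\Ric\equiv C_2$; your curvature computations for $C_{m,m}$ and $\mathbb{C}P^2_{4/3}$ are accurate.

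The paper takes a different route: as noted in the comments after the statements, Corollary \ref{cor_ric} is deduced from the integral result, Theorem \ref{thm_ricci_int}. Under the strict pointwise bound one may take $\Phi=\Ric_{\min}/(n-1)$, so that $(n-1)\Phi>C_2$ while $\beric\equiv 0$; the hypothesis $\|\beric\|_{n/2}<\epsilon_2$ is then trivially satisfied and Theorem \ref{thm_ricci_int} gives the conclusion directly. That route avoids any case analysis of the borderline examples, at the cost of invoking the heavier integral machinery. Your approach is more elementary and self-contained; indeed, one can shortcut it further by reading off from \eqref{eq-Ricci-key} that a strict inequality $\Ric_{\min}>C_2$ forces $\int_M\rho^{2k}(\Ric_{\min}-C_2)\le 0$, hence $\rho\equiv 0$, without ever naming the equality models.
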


\begin{thm}\label{thm_ricci_int}
	Let $M$ be an $n$-dimensional closed $k$-extremal submanifold in $\mathbb {S}^{n+p}$, $n\ge 4, k\ge 1$.
	Given a function $\Phi$ on $M$ such that $(n-1)\Phi>C_2(n,H,\rho,k)$. Then there exists a constant $\epsilon_2(n,H,k,\Phi)$ such that if
	\begin{equation*}
		\|\beric\|_{n/2}< \epsilon_2(n,H,k,\rho,\Phi),
	\end{equation*}
	then $M$ is totally umbilical.
\end{thm}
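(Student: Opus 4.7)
The plan is to combine the pointwise Simons-type inequality that underlies Theorem \ref{thm_ricci} with a Sobolev--Hölder argument, in direct analogy with the passage from Theorem \ref{thm-Sim} to Theorem \ref{thm-CW} carried out in \cite{CW21}.

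First, I would start from the pointwise Simons-type inequality which should be the engine of Theorem \ref{thm_ricci}. Schematically it reads
\begin{equation*}
\frac{1}{2}\Delta \rho^2 \geq |\nabla \tilde{h}|^2 + \rho^2\Big(\Ric_{\min}^{M} - C_2(n,H,\rho,k)\Big),
\end{equation*}
where $\tilde{h}$ is the traceless second fundamental form and the $k$-extremal Euler--Lagrange equation is used to eliminate the terms involving $\Delta |H|^2$ and the mixed derivatives of $H$ and $\tilde{h}$. Writing $\Ric_{\min} = (n-1)\Phi - \beric$ and setting $\delta := (n-1)\Phi - C_2(n,H,\rho,k) > 0$, this rearranges to
\begin{equation*}
\frac{1}{2}\Delta \rho^2 \geq |\nabla \tilde{h}|^2 + \delta \rho^2 - \beric \cdot \rho^2.
\end{equation*}

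Next I would multiply this inequality by $\rho^{2(k-1)}$ and integrate by parts on the closed manifold $M$. Together with a Kato-type inequality of the form $|\nabla \tilde{h}|^2 \geq c_1(n) |\nabla \rho|^2$, this produces a global estimate
\begin{equation*}
c_2(n,k) \int_M |\nabla \rho^k|^2 + \delta \int_M \rho^{2k} \leq \int_M \beric \cdot \rho^{2k}.
\end{equation*}
The right-hand side is then bounded by Hölder's inequality followed by the Hoffman--Spruck Sobolev inequality applied to $\rho^k$ on $M \subset \mathbb{S}^{n+p} \subset \mathbb{R}^{n+p+1}$:
\begin{equation*}
\int_M \beric \cdot \rho^{2k} \leq \|\beric\|_{n/2}\, \|\rho^k\|_{2n/(n-2)}^{2} \leq C_S(n)\|\beric\|_{n/2} \int_M \Big(|\nabla \rho^k|^2 + (1+H^2)\rho^{2k}\Big),
\end{equation*}
where the $1+H^2$ arises because the mean curvature of $M$ in $\mathbb{R}^{n+p+1}$ equals $\sqrt{1+H^2}$.

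Combining these estimates, I would define
\begin{equation*}
\epsilon_2(n,H,k,\rho,\Phi) := \min\Big\{\tfrac{c_2(n,k)}{2 C_S(n)},\ \tfrac{\delta}{2 C_S(n)(1+H^2)}\Big\},
\end{equation*}
so that whenever $\|\beric\|_{n/2} < \epsilon_2$ both coefficients in
\begin{equation*}
\big(c_2(n,k) - C_S(n)\|\beric\|_{n/2}\big)\!\int_M\! |\nabla \rho^k|^2 + \big(\delta - C_S(n)(1+H^2)\|\beric\|_{n/2}\big)\!\int_M\! \rho^{2k} \leq 0
\end{equation*}
are strictly positive, forcing $\rho \equiv 0$ and hence $M$ totally umbilical. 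The main obstacle is producing the clean pointwise Simons-type inequality in the first step with the precise coefficient $C_2(n,H,\rho,k)$ of $\rho^2$: one must handle the $\rho^4$ and $H\rho^3$ cross terms coming from $\sum_{\alpha,\beta}\big(\tr(\tilde{h}_\alpha \tilde{h}_\beta)\big)^2$ and $\tr(A_H \tilde{h}^3)$ via Li--Li or Okumura-type inequalities on traceless symmetric matrices, and the $k$-extremal Euler--Lagrange equation must be used so that the extra divergence terms generated by testing against $\rho^{2(k-1)}$ either vanish or can be absorbed into the good gradient term. Once the pointwise estimate is in place, the integral argument above is standard.
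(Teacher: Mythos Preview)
Your end-game (Kato inequality, Hoffman--Spruck Sobolev, H\"older, absorption) is exactly what the paper does. The gap is at the very start: for $k$-extremal submanifolds there is \emph{no} pointwise Simons inequality of the form $\tfrac12\Delta\rho^2 \ge |\nabla\tilde h|^2 + \rho^2(\Ric_{\min}-C_2)$ that you can then test against $\rho^{2k-2}$. The Euler--Lagrange equation \eqref{kextremaleq} contains the terms $\Delta(\rho^{2k-2})$, $(\rho^{2k-2})_iH_i^\alpha$, and $\square^\alpha(\rho^{2k-2})$; these are derivatives of $\rho^{2k-2}$, not of $\rho^2$, so for $k>1$ the $k$-extremal condition simply cannot be used to clean up $\tfrac12\Delta\rho^2$ pointwise. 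The engine of Theorem~\ref{thm_ricci} is already an \emph{integral} inequality, not a pointwise one.

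Concretely, the paper first proves the integral identity of Proposition~\ref{pinteq} by pairing the self-adjointness relation $\int_M \rho^{2k-2}\sum_\alpha\square^\alpha(nH^\alpha)=\int_M \sum_\alpha nH^\alpha\,\square^\alpha(\rho^{2k-2})$ with the Euler--Lagrange equation; this is how the derivative terms involving $\rho^{2k-2}$ and $H^\alpha$ actually cancel. Into that identity one substitutes \eqref{norm}, \eqref{Iitem} and then runs Ejiri's trick (diagonalize each $\tilde A_\alpha$, bound $\sum N([\tilde A_\alpha,\tilde A_\beta])$ via the Gauss equation and $\Ric_{\min}$) to reach
\[
0\ \ge\ \int_M \rho^{2k-2}|\nabla\tilde h|^2 + (2k-2)\int_M \rho^{2k-2}|\nabla\rho|^2 + n\int_M \rho^{2k}\big(\Ric_{\min}-C_2\big),
\]
which is \eqref{eq-Ricci-key}. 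Note the weight $\rho^{2k-2}$ is already baked in; it does not arise from multiplying a pointwise inequality afterwards. From here the paper uses \eqref{ineqrho} (Kato plus the Sobolev inequality of Proposition~\ref{prop-int} with a free parameter $t$), writes $\Ric_{\min}-C_2\ge \delta-\beric$, applies H\"older, and then chooses $t$ so that the $(1+H^2)\rho^{2k}$ term produced by Sobolev exactly cancels the $\delta$-term---this is a slightly cleaner bookkeeping than your $\min\{\cdot,\cdot\}$ choice of $\epsilon_2$, but leads to the same conclusion. Two minor corrections: you should use $\Ric_{\min}\ge (n-1)\Phi-\beric$ (an inequality, since $\beric=\max\{0,\cdot\}$), and the algebraic step that produces the coefficient $C_2$ is Ejiri's estimate \eqref{eq-Q}, not a Li--Li/Okumura bound on $\rho^4$.
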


At last, we show rigidity theorems in terms of the scalar curvature.

\begin{thm}\label{thm_scal}
	Let $M$ be an $n$-dimensional closed $k$-extremal submanifold in $\mathbb {S}^{n+p}$, $n\ge 2, k\ge 1$.
	If the scalar curvature of $M$ satisfies
	\begin{equation}\label{cond-scal}
	\rho^2\le C_3(n,p,H,k),
	\end{equation}
	where
	\begin{equation}\label{bound-scal}
		C_3(n,p, H,k) = \Big(n+n(\frac{n}{2k}-1)H^2\Big)(1-\frac{1}{3}\sgn(p-1)),
	\end{equation}
	then either $\rho^2\equiv 0$ and $M$ is totally umbilical, or $\rho^2\equiv C_3(n,p, H,k)$.
	In the latter case, $M$ is either one of the $k$-extremal tori $T_{m,k}$ with $p=1$ (see Definition \ref{def-k-tori}), or the Veronese surface in $\mathbb{S}^4$ with $n=p=2$.
\end{thm}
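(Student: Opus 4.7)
The plan is to follow the classical Simons--Chern--do Carmo--Kobayashi scheme, further adapted by Li to the Willmore setting, modified to accommodate the general weight $\rho^{2k-2}$ coming from the $k$-extremal Euler--Lagrange equation.

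First I would derive the Euler--Lagrange equation for $F_k=\int_M\rho^{2k}$. Varying the immersion and taking the normal component of the first variation produces a fourth-order system whose leading term can be written as a divergence of $\rho^{2k-2}\,\tilde h$ (with $\tilde h$ the traceless second fundamental form). This equation was obtained for $k=1$ in \cite{GL07} and for $1<k<n/2$ in \cite{Wu09}; I would verify the identity extends for all $k\ge 1$ and extract the pointwise identity on the Weingarten operators that it yields. Next, I would compute $\tfrac{1}{2}\Delta\rho^2$ via the Simons identity for submanifolds in a sphere. After multiplying by $\rho^{2k-2}$, integrating over $M$ and integrating by parts, the resulting terms involving $\rho^{2k-2}\langle\tilde h,\nabla^2(nH)\rangle$ and $\rho^{2k-4}|\nabla\rho^2|^2$ can be absorbed using the $k$-extremal equation, producing an integral inequality of the form
\[
  \int_M \rho^{2k-2}\bigl(C_3(n,p,H,k)-\rho^2\bigr)\,\bigl(\text{nonnegative factor}\bigr)\,dV \;\ge\; 0.
\]

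The critical algebraic inputs are the sharp estimate
\[
  \sum_{\alpha,\beta}|[\tilde A_\alpha,\tilde A_\beta]|^2 + \sum_{\alpha,\beta}\bigl(\tr(\tilde A_\alpha\tilde A_\beta)\bigr)^2 \;\le\; \bigl(1+\tfrac{1}{2}\sgn(p-1)\bigr)\rho^4
\]
(the Chern--do Carmo--Kobayashi form when $p=1$ and the Li--Li sharpening when $p\ge 2$), together with the Okumura inequality $|\tr\tilde A^3|\le\frac{n-2}{\sqrt{n(n-1)}}\rho^3$ in codimension one and its codimension-$\ge 2$ analogue. Combining these two ingredients is precisely what produces the overall factor $1-\tfrac{1}{3}\sgn(p-1)$ appearing in the definition of $C_3$.

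Finally, the pinching hypothesis $\rho^2\le C_3$ forces the integrand above to vanish pointwise. Either $\rho\equiv 0$, in which case $M$ is totally umbilical, or equality holds everywhere. Equality forces $\tilde h$ to be parallel and saturates every algebraic inequality used; the standard rigidity dichotomy then identifies $M$ as a $k$-extremal Clifford-type torus $T_{m,k}$ (when $p=1$, with the radii determined by $k$) or the Veronese surface in $\mathbb{S}^4$ (when $n=p=2$).

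The principal obstacle I anticipate is the careful integration by parts after inserting the weight $\rho^{2k-2}$. For $k=1$ this weight disappears and for $k=n/2$ the Willmore case applies, but general $k\ge 1$ requires tracking the extra gradient terms $\rho^{2k-4}|\nabla\rho^2|^2$ and checking that the $k$-extremal identity provides enough cancellation to reveal $C_3(n,p,H,k)$ as the \emph{sharp} threshold. The related subtlety, which I expect to be the most delicate part of the equality discussion, is confirming that the tori $T_{m,k}$ precisely realise equality and genuinely correspond to critical points of $F_k$ for the stated parameters, since the parallel-$\tilde h$ classification must be refined to select exactly those radii compatible with the $k$-extremal equation rather than the minimal or Willmore one.
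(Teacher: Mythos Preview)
Your overall scheme is the right one and matches the paper's, but there is a concrete misconception in your list of ``critical algebraic inputs'' that would prevent you from landing on the stated constant $C_3$.

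You say the factor $1-\tfrac{1}{3}\sgn(p-1)$ arises from \emph{combining} the Li--Li estimate
\[
\sum_{\alpha,\beta}N([\tilde A_\alpha,\tilde A_\beta])+\sum_{\alpha,\beta}\tilde\sigma_{\alpha\beta}^2\le\Bigl(1+\tfrac12\sgn(p-1)\Bigr)\rho^4
\]
with the Okumura-type bound on $\sum H^\alpha\tr(\tilde A_\alpha\tilde A_\beta^2)$. In fact the Okumura inequality plays no role here: once you substitute the curvature identity
\[
\sum_{\alpha,i,j,k,l}h_{ij}^\alpha(h_{kl}^\alpha R_{lijk}+h_{li}^\alpha R_{lkjk})
=n(1+H^2)\rho^2-\sum_{\alpha,\beta}\tilde\sigma_{\alpha\beta}^2+n\sum_{\alpha,\beta}H^\alpha\tr(\tilde A_\alpha\tilde A_\beta^2)-\tfrac12\sum_{\alpha,\beta}N([\tilde A_\alpha,\tilde A_\beta])
\]
into the $k$-extremal integral equality, the cubic term $n\sum H^\alpha\tr(\tilde A_\alpha\tilde A_\beta^2)$ cancels \emph{exactly} against the one already present. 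If instead you estimate it via Okumura you will pick up a spurious $\frac{n-2}{\sqrt{n(n-1)}}H\rho$ contribution and will not recover $C_3$ as written. The factor $1-\tfrac{1}{3}\sgn(p-1)=\bigl(1+\tfrac12\sgn(p-1)\bigr)^{-1}$ comes from Li--Li alone. The only other estimate you need (and do not mention) is the elementary $\sum_{\alpha,\beta}H^\alpha H^\beta\tilde\sigma_{\alpha\beta}\le H^2\rho^2$, which disposes of the remaining mean-curvature cross term.

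For the equality discussion when $p\ge 2$, note that you must first deduce $H=0$: equality in the Li--Li lemma forces the $\tilde A_\alpha$ into the two-block Veronese pattern, and equality in $\sum H^\alpha H^\beta\tilde\sigma_{\alpha\beta}\le H^2\rho^2$ together with $\rho^2>0$ then gives $H=0$. Only after that do you have a minimal submanifold with $S\le\tfrac{2}{3}n$ and can invoke the Li--Li classification to conclude it is the Veronese surface. Your sketch jumps straight to ``standard rigidity dichotomy'' without isolating this step.
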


The following is a version of integral curvatures.
\begin{thm}\label{thm_scal_int}
	Let $M$ be an $n$-dimensional closed $k$-extremal submanifold in $\mathbb {S}^{n+p}$, $n\ge 2, k\ge 1$.
	Given a function $\Phi$ on $M$ such that $\Phi<C_3(n,p, H,k)$. Then there exists a constant $\epsilon_3(n,p, H,k,\Phi)$ such that if
	\begin{equation*}
		\|(\rho^2-\Phi)_{+}\|_{n/2}< \epsilon_3(n,p, H,k,\Phi),
	\end{equation*}
	then $M$ is totally umbilical.
\end{thm}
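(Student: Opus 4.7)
The plan is to upgrade the pointwise Simons-type inequality that underlies Theorem~\ref{thm_scal} to an integral one, then close the argument with H\"older's inequality plus the Hoffman--Spruck Sobolev inequality, in the style of \cite{Xu94,CW21,YFZ22}.

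Concretely, combining Simons' identity for $|h|^2$, the Kato inequality $|\nabla \rho^k|^2\le k^2\rho^{2k-2}|\nabla h|^2$, and the Euler--Lagrange equation for the $F_k$ functional (a fourth-order PDE in the second fundamental form), the pointwise proof of Theorem~\ref{thm_scal} should in fact produce, after integration by parts, an integral Simons-type inequality of the form
\begin{equation}
A_{n,k}\int_M |\nabla \rho^k|^2\,dM \;\le\; \int_M \bigl(\rho^2-C_3(n,p,H,k)\bigr)\rho^{2k}\,dM
\end{equation}
for some positive constant $A_{n,k}$. Setting $\delta:=C_3(n,p,H,k)-\Phi>0$ and writing $\rho^2-C_3=(\rho^2-\Phi)-\delta$, this becomes
\begin{equation}
A_{n,k}\int_M |\nabla \rho^k|^2\,dM+\int_M \delta\,\rho^{2k}\,dM\;\le\;\int_M (\rho^2-\Phi)_+\rho^{2k}\,dM.
\end{equation}
By H\"older with exponents $n/2$ and $n/(n-2)$ followed by the Hoffman--Spruck/Michael--Simon Sobolev inequality applied to $u=\rho^k$,
\begin{equation}
\int_M (\rho^2-\Phi)_+\rho^{2k}\,dM \;\le\; \|(\rho^2-\Phi)_+\|_{n/2}\,C_{\mathrm{S}}\int_M\!\bigl(|\nabla \rho^k|^2+(1+H^2)\rho^{2k}\bigr)\,dM,
\end{equation}
with $C_{\mathrm{S}}=C_{\mathrm{S}}(n,\sup_M|H|)$. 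Declaring $\epsilon_3(n,p,H,k,\Phi):=\min\{A_{n,k},\,\inf_M\delta/(1+\sup_MH^2)\}/(2C_{\mathrm{S}})$ and assuming $\|(\rho^2-\Phi)_+\|_{n/2}<\epsilon_3$, one absorbs both the $|\nabla\rho^k|^2$ and the $\rho^{2k}$ terms on the right into the left of the previous display, forcing $\int_M \rho^{2k}=0$; hence $\rho\equiv 0$ and $M$ is totally umbilical.

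The main obstacle lies in the first step: for general $k\ge 1$ the Euler--Lagrange equation of $F_k$ is neither the minimal nor the Willmore one, and it takes careful bookkeeping to (i) eliminate the $\Delta \rho^{2k}$ term via this EL equation, (ii) keep the coefficient $A_{n,k}$ in front of $\int|\nabla \rho^k|^2$ strictly positive, and (iii) match the sharp right-hand constant $C_3(n,p,H,k)$ that already appears in Theorem~\ref{thm_scal}. Once this key integral Simons inequality is secured, the remainder is routine, with the only additional ingredient being that the Michael--Simon Sobolev constant on a closed submanifold of $\mathbb{S}^{n+p}$ depends only on $n$ and an upper bound on $|H|$, so that $\epsilon_3$ can legitimately be taken as a function of $n,p,H,k,\Phi$.
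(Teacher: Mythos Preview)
Your outline is essentially correct and matches the paper's own argument. The paper obtains exactly the integral Simons-type inequality you anticipate as a by-product of the computation \eqref{parameter-scalar} in the proof of Theorem~\ref{thm_scal} (via the self-adjointness identity in Proposition~\ref{pinteq} plus the algebraic estimates \eqref{LL-ineq} and \eqref{H2rho2}), then feeds it through the Kato-type Lemma~\ref{gradhrho} and the Sobolev inequality of Proposition~\ref{prop-int} together with H\"older, just as you propose. Two cosmetic remarks: the Kato step should involve $|\nabla\tilde h|$ rather than $|\nabla h|$, and your constant $A_{n,k}$ actually carries a factor $\tau(p)=1+\tfrac12\sgn(p-1)$, so it depends on $p$ as well; also, the paper balances the $(1+H^2)\rho^{2k}$ term against the $\delta\rho^{2k}$ term by choosing the free parameter $t$ in Proposition~\ref{prop-int} rather than by a direct absorption with $\sup_M H$ and $\inf_M\delta$, but the two bookkeeping schemes are equivalent.
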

When $k<n/2$, we can choose an upper bound independent of $H$.
\begin{thm}\label{thm_scal_int'}
	Let $M$ be an $n$-dimensional closed $k$-extremal submanifold in $\mathbb {S}^{n+p}$, $n\ge 2,  1\le k<n/2$.
	Given a function $\Phi$ on $M$ such that $\Phi<(1-\frac{1}{3}\sgn(p-1))n$. Then there exists a constant $\epsilon_3'(n,p,k,\Phi)$ such that if
	\begin{equation*}
		\|(\rho^2-\Phi)_{+}\|_{n/2}< \epsilon_3'(n,p,k,\Phi),
	\end{equation*}
	then $M$ is totally umbilical.
\end{thm}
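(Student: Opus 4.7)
The plan is to adapt the approach of Theorem \ref{thm_scal_int}, exploiting the hypothesis $k<n/2$ to remove the dependence of the smallness threshold on the mean curvature. First, I would derive a pointwise Simons-type differential inequality for $\Delta\rho^{2}$ from the $k$-extremal Euler--Lagrange equation; the expected form is roughly
\begin{equation*}
\tfrac{1}{2}\Delta\rho^{2}\;\ge\;|\nabla h|^{2}-|\nabla H|^{2}+\rho^{2}\Big(n+n\big(\tfrac{n}{2k}-1\big)H^{2}-(1-\tfrac{1}{3}\sgn(p-1))^{-1}\rho^{2}\Big),
\end{equation*}
which is precisely the inequality underlying the pointwise bound in Theorem \ref{thm_scal}. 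The crucial structural feature is that the coefficient $n(\tfrac{n}{2k}-1)$ is strictly positive, precisely because $k<n/2$, so the $H^{2}$-term on the right has a definite sign.

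Next, I would multiply by a cut-off of the form $(\rho^{2}-\Phi)_{+}^{q}$ for a suitable exponent $q$, integrate over $M$, apply a Kato-type inequality to control the gradient terms, and invoke the Michael--Simon Sobolev inequality
\begin{equation*}
\Big(\int_{M}f^{n/(n-2)}\Big)^{(n-2)/n}\le C_{S}(n)\int_{M}\big(|\nabla f|+Hf\big),\qquad f=(\rho^{2}-\Phi)_{+}^{\alpha},
\end{equation*}
together with H\"older's inequality. After rearrangement this should produce an estimate of the shape
\begin{equation*}
\int_{M}(\rho^{2}-\Phi)_{+}^{n/2}\Big(G-A\,\|(\rho^{2}-\Phi)_{+}\|_{n/2}\Big)\le 0,
\end{equation*}
where the ``gap'' $G$ contains $n(1-\tfrac{1}{3}\sgn(p-1))-\Phi$ together with a nonnegative contribution of order $H^{2}$. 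If $\epsilon_{3}'$ is chosen strictly below $G/A$ with $G\ge n(1-\tfrac{1}{3}\sgn(p-1))-\sup_{M}\Phi>0$, the smallness hypothesis forces $(\rho^{2}-\Phi)_{+}\equiv 0$, hence the pointwise pinching $\rho^{2}\le\Phi<(1-\tfrac{1}{3}\sgn(p-1))n\le C_{3}(n,p,H,k)$, and Theorem \ref{thm_scal} concludes that $M$ is totally umbilical.

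The main obstacle is the bookkeeping required to show that every factor of $H$ introduced by the Michael--Simon inequality can be absorbed into the positive $H^{2}\rho^{2}$ term supplied by the Simons-type inequality, so that the constant $A$ ultimately depends only on $n,p,k,\Phi$. This absorption is exactly what the strict inequality $k<n/2$ buys: when $k=n/2$ (the Willmore case) the coefficient $\tfrac{n}{2k}-1$ vanishes and one cannot cancel the $H$-term produced by the Sobolev step without assuming a bound on $H$, which is why Theorem \ref{thm_scal_int} carries its $H$-dependent threshold. Once the absorption is verified, the iteration/choice of $\epsilon_{3}'$ follows the standard pattern used in \cite{Xu94}, \cite{PXZ23} and in the proof of Theorem \ref{thm_scal_int}.
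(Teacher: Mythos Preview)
Your central insight is exactly the one in the paper: when $k<n/2$ the coefficient $n(\tfrac{n}{2k}-1)$ is strictly positive, so the $H^{2}$-contribution produced by the Sobolev inequality (the $\mathfrak{B}(n,t)\int_{M}H^{2}\rho^{2k}$ term in Proposition~\ref{prop-int}) can be absorbed into the favourable $(\tfrac{n^{2}}{2k}-n)\int_{M}H^{2}\rho^{2k}$ term, leaving a threshold independent of $H$. That part of your plan is correct.

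However, the execution you sketch diverges from the paper in a way that creates a real gap. You propose to derive a \emph{pointwise} Simons-type inequality for $\tfrac12\Delta\rho^{2}$ in which the coefficient $n(\tfrac{n}{2k}-1)H^{2}$ already appears. This coefficient does \emph{not} come from the intrinsic Simons formula; it comes from the $k$-extremal Euler--Lagrange equation \eqref{kextremaleq}, and the paper extracts it only after combining \eqref{kextremaleq} with the self-adjointness of $\square^{\alpha}$ and integrating with the specific weight $\rho^{2k-2}$ (Proposition~\ref{pinteq}). At the pointwise level the Euler--Lagrange equation still carries the terms $\square^{\alpha}(\rho^{2k-2})$, $\Delta(\rho^{2k-2})$ and $(\rho^{2k-2})_{i}H^{\alpha}_{i}$, which cannot be eliminated without integration by parts; so the displayed pointwise inequality you wrote down is not available. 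Consequently the plan to multiply by an arbitrary cut-off $(\rho^{2}-\Phi)_{+}^{q}$ and integrate does not mesh with how the $k$-extremal condition is actually used.

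The paper instead works directly with the weighted integral identity \eqref{parameter-scalar}, applies \eqref{ineqrho} with $f=\rho_{\epsilon}^{k}$, and then chooses $t_{3}'$ so that $\tfrac{2kn-n+2}{nk^{2}}\mathfrak{B}(n,t_{3}')=\tau(p)\min\{\tfrac{n^{2}}{2k}-n,\ \delta_{0}'\}$, which simultaneously absorbs both the $1$- and the $H^{2}$-parts of $\mathfrak{B}(n,t)\int_{M}(1+H^{2})\rho^{2k}$. One obtains
\[
0\;\ge\;\|\rho^{2k}\|_{n/(n-2)}\Big(\tfrac{2kn-n+2}{nk^{2}}\mathfrak{A}(n,t_{3}')-\tau(p)\,\|(\rho^{2}-\Phi)_{+}\|_{n/2}\Big),
\]
and hence $\rho\equiv 0$ directly, without passing through Theorem~\ref{thm_scal}. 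If you rewrite your argument starting from Proposition~\ref{pinteq} with test function $\rho^{k}$ rather than from a hypothetical pointwise inequality, the rest of your outline (H\"older, absorption via $k<n/2$, choice of $\epsilon_{3}'$) goes through and coincides with the paper's proof.
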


We give some comments on our results.
\begin{enumerate}
	\item Corollary \ref{cor_ric} can be derived from Theorem \ref{thm_ricci_int}; Corollary \ref{cor_sec} can be derived from Theorems \ref{thm_sec_int} and \ref{thm_sec_int-n} when $n\ge 3$.
	\item For the totally umbilical sphere $\mathbb{S}^n(r)$, we have $1/r^2=1+H^2$, and the sectional curvature is $1+H^2$ and the Ricci curvature is $(n-1)(1+H^2)$. If $1\le k\le n/2$, then \eqref{cond-sec} and \eqref{cond-ric} are always satisfied. But if $k>n/2$, then the radius $r$ of the sphere is restricted.
	\item Shu's Ricci pinching theorem in \cite{Shu07} for Willmore submanifolds requires $n\ge 5$,
	while ours requires $n\ge 4$.
	Even for $k=n/2$ and $k=1$, our Ricci curvature pinching bounds $C_2$ is better than the ones in \cites{Shu07,Yan10,Han13}.
	\item Our sectional curvature pinching bounds $C_1$ is better than the one in \cite{Yan10} for $k=1$,
	and coincides with the one in \cite{Han13} for $k=n/2$.
	However, the conclusions of the sectional curvature pinching theorems in \cites{Shu07, Han13} for $k=n/2$ say that one of the candidates of $M$ is the Willmore torus $W_{1,n-1}$;
	this is impossible when $n\ge 3$ since $K_{\min}=0$ but $C_1>0$.
	The correct candidate is $C_{m,m}$.
	\item Theorem \ref{thm_scal} recovers Theorem \ref{thm-Li} when $k=n/2$; it also improves Theorem 1.3 in \cite[]{GL07} when $k=1$.
	Compared with Theorem 5.1 in  \cite{Wu09}, our bound $C_3$ is a bit better, and we determine the tori when $p=1$.
	\item If we take $\Phi\equiv 0$, then Theorem \ref{thm_scal_int} recovers Theorem 2.2 in \cite{Yan10} for $k=n/2$, while Theorem \ref{thm_scal_int'} recovers Theorem A in \cite{XY11} for $k=1$ and Theorem 4 in \cite{GW15} for $1<k<n/2$.
\end{enumerate}

This paper is organized as follows.
In Section 2, we introduce notations, recall the basic formulas in submanifolds of geometry, and list some lemmas.
In Section 3, we show some examples of $k$-extremal submanifolds in a sphere, including minimal Einstein submanifolds and isoparametric hypersurfaces.
In the last three sections, we prove our main results under sectional, Ricci and scalar curvatures conditions respectively.

\section{Preliminaries}
\subsection{Notations and fundamental formulas for submanifolds}
Firstly, we introduce some notations and recall the fundamental formulas for submanifolds.
Let $M$ be an $n$-dimensional submanifold in $\mathbb{S}^{n+p}$. 
We choose a local orthonormal frame $\{e_1,\cdots,e_{n+p}\}$ such that $\{e_1,\cdots,e_{n}\}$ are tangent to $M$ and $\{e_{n+1},\cdots,e_{n+p}\}$ are normal to $M$ when restricted to $M$.
Let $\{\omega_A\}$ be the dual coframe.
We make the following convention on the range of indices except special declaration:
\begin{equation*}
	1\leq A, B, C,\cdots\leq n+p; \quad 1\leq i, j, k,\cdots\leq n;\quad  n+1\leq\alpha, \beta, \gamma,\cdots\leq n+p.
\end{equation*}
Denote
\begin{equation*}
	h=\sum_{i,j,\alpha}h_{ij}^{\alpha}\omega_i\otimes\omega_j\otimes
	e_{\alpha}
\end{equation*}
the second fundamental form of $M$ in $\mathbb{S}^{n+p}$, and set
\begin{align*}
	A_{\alpha}=&(h_{ij}^{\alpha}), \quad H^{\alpha}=\frac{1}{n}\sum_{k}h_{kk}^\alpha,\quad \mathbf{H}=\sum_{\alpha}H^\alpha e_\alpha,\quad H=|\mathbf{H}|=\sqrt{\sum_{\alpha}(H^{\alpha})^2},\quad S=\sum_{i,j,\alpha}(h_{ij}^{\alpha})^2.
\end{align*}
Now Gauss, Codazzi and Ricci equations are respectively given by
\begin{align}
&R_{ijkl}=(\delta_{ik}\delta_{jl}
-\delta_{il}\delta_{jk})+\sum_{\alpha}(h_{ik}^{\alpha}h_{jl}^{\alpha}-h_{il}^{\alpha}h_{jk}^{\alpha}),\label{eqG}\\
&h_{ijk}^{\alpha}=h_{ikj}^{\alpha},\label{eqC}\\
&R_{\alpha\beta
	ij}^{\bot}=\sum_{k}h_{ik}^{\alpha}h_{kj}^{\beta}-\sum_{k}h_{ik}^{\beta}h_{kj}^{\alpha},\label{eqR}
\end{align}
where $R_{ijkl}$ and $h_{ijk}^{\alpha}$ are the components of  Riemannian curvature of $M$ and covariant derivative of $ h_{ij}^{\alpha}$ under the orthonormal frame respectively. The Ricci identity shows that
\begin{equation}\label{eq-Ric-id}
h_{ijkl}^{\alpha}-h_{ijlk}^{\alpha}=\sum_{m}R_{mikl}h_{mj}^{\alpha}+\sum_{m}R_{mjkl}h_{im}^{\alpha}+\sum_{\beta}R_{\beta\alpha kl}^{\bot}h_{ij}^{\beta}.
\end{equation}
From (\ref{eqG}), we can get the Ricci curvature and the scalar curvature respectively as follows:
\begin{align}
R_{ij}&=(n-1)\delta_{ij}+n\sum_{\alpha}H^{\alpha}h^{\alpha}_{ij}-\sum_{\alpha, k}h^{\alpha}_{ik}h^{\alpha}_{kj},\label{Ricci}\\
R&=n(n-1)+n^2H^2-S.\label{scalar}
\end{align}
Since $S \ge nH^2$, we have $R \le n(n-1)(1+H^2)$. When $H =0,\ \Ric \le n-1$.

When studying the rigidity of a non-minimal submanifolds, we usually use the following traceless second fundamental form
\begin{align}
	\tilde h=\sum_{i,j}\tilde h_{ij}^\alpha\omega_{i}\otimes \omega_{j} \otimes e_{\alpha },
\end{align}
where $\tilde h_{ij}^\alpha = h_{ij}^\alpha-H^\alpha \delta_{ij}$.
We set
\begin{align}\label{twosigma}
	\tilde{A}_{\alpha}=(\tilde h_{ij}^\alpha),\quad \sigma_{\alpha \beta }=\sum_{i,j}h_{ij}^{\alpha }h_{ij}^{\beta}=\tr(A_{\alpha}A_{\beta}),\quad
	\tilde \sigma_{\alpha \beta }=\sum_{i,j}\tilde h_{ij}^{\alpha }\tilde h_{ij}^{\beta}=\tr(\tilde{A}_{\alpha}\tilde{A}_{\beta}).
\end{align}
Then it can be easily checked that
\begin{gather}
	\rho^2=\sum_{i,j,\alpha}(\tilde{h}_{ij}^{\alpha})^{2}=S-nH^2,\\
	\sigma_{\alpha \beta }=
	\tilde \sigma_{\alpha \beta }+nH^\alpha H^\beta,\label{sigma-1}\\
	\sum_{\alpha ,\beta }H^\alpha H^\beta \sigma_{\alpha \beta }
	=\sum_{\alpha ,\beta }H^\alpha H^\beta \tilde \sigma_{\alpha \beta }+nH^4,\label{sigma}\\
	\sum_{\alpha ,\beta}H^\alpha \tr(A_{\alpha}A_{\beta}^2) =
	\sum_{\alpha ,\beta}H^\alpha \tr(\tilde{A}_{\alpha}\tilde{A}_{\beta}^2)
	+2\sum_{\alpha ,\beta }H^\alpha H^\beta \tilde \sigma_{\alpha \beta}+H^2\rho^2+nH^4.\label{threeitems}
\end{gather}

\subsection{Some Lemmas}
We recall some results which will be used to prove the main theorems.
The following three algebraic lemmas can be proven by using the method of Lagrange multipliers.

\begin{lem}[{cf. \cite{Oku74, Li96}}]\label{ineqs-hypersurface}
	Let $a_{i}$ for $i=1, \cdots, n$ be real numbers satisfying $\sum_{i=1}^{n}a_{i}=0$ and $\sum_{i=1}^{n}a_{i}^{2}=a$. Then
	\begin{align}
		\Big|\sum_{i=1}^{n}a_{i}^3\Big|\le
		\frac{n-2}{\sqrt{n(n-1)}}a^{3/2}.
	\end{align}
	Moreover, equality holds if and only if at least $(n-1)$ of the $a_i$'s are equal.
\end{lem}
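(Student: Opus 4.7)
My plan is to extremize $F(a_1,\ldots,a_n) = \sum_{i} a_i^3$ over the compact constraint set $\{\sum a_i = 0,\ \sum a_i^2 = a\}$ using Lagrange multipliers. Introducing multipliers $\mu,\lambda$ for the two constraints, the Lagrange condition reads, for every $i$,
\begin{equation*}
3a_i^2 - 2\lambda a_i - \mu = 0.
\end{equation*}
Since this is a single quadratic equation (independent of $i$), each $a_i$ must coincide with one of its at most two roots. So at any extremum the multiset $\{a_1,\ldots,a_n\}$ contains at most two distinct values: some value $\alpha$ with multiplicity $k$ and some value $\beta$ with multiplicity $n-k$, where $1\le k\le n-1$ (the cases $k\in\{0,n\}$ combined with $\sum a_i=0$ force $a=0$, for which the inequality is trivial).

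Next I would solve the constraints $k\alpha+(n-k)\beta=0$ and $k\alpha^2+(n-k)\beta^2=a$ in closed form, getting $\alpha=-\tfrac{n-k}{k}\beta$ and $\beta^2=\tfrac{ka}{n(n-k)}$. Substituting into $F$ and simplifying gives
\begin{equation*}
\sum_{i=1}^{n}a_i^3 \;=\; \frac{n(2k-n)(n-k)}{k^2}\,\beta^3,
\end{equation*}
hence after squaring and replacing $\beta^2$,
\begin{equation*}
\Bigl(\sum_{i=1}^{n}a_i^3\Bigr)^{2} \;=\; \frac{(2k-n)^2}{n\,k(n-k)}\,a^{3}.
\end{equation*}

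It remains to maximise $\phi(k)=\tfrac{(2k-n)^2}{k(n-k)}$ over $k\in\{1,\ldots,n-1\}$. The substitution $s=k-n/2$ turns $\phi$ into $4s^2/(n^2/4-s^2)$, which is strictly increasing in $|s|$, so its maximum on $\{1,\ldots,n-1\}$ is attained exactly at $k=1$ or $k=n-1$, where $\phi(k)=\tfrac{(n-2)^2}{n-1}$. This yields the desired inequality $|\sum a_i^3|\le \tfrac{n-2}{\sqrt{n(n-1)}}a^{3/2}$, with equality forcing $k\in\{1,n-1\}$ — equivalently, at least $n-1$ of the $a_i$'s are equal.

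No step here is deep; the only mild subtlety is ensuring that the Lagrange equations yield a \emph{single} quadratic valid simultaneously for all indices (so the $a_i$'s genuinely split into just two groups), and tracking signs of $\beta$ carefully so that both signs of $\sum a_i^3$ are captured by the absolute value. With the compactness of the constraint set handling existence of extrema, this finite optimisation over $k$ closes the argument.
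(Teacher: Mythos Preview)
Your argument is correct and follows exactly the Lagrange multiplier method that the paper itself indicates (the paper does not give a detailed proof but simply cites \cite{Oku74, Li96} and remarks that the lemma ``can be proven by using the method of Lagrange multipliers''). The computations of $\beta^2$, $\sum a_i^3$, and the maximisation of $\phi(k)$ are all accurate, and your handling of the equality case matches the stated characterisation.
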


\begin{lem}[{cf. \cite[Lemma 3.4]{Che02}}]\label{ineqs}
	Let $a_{i}$ and $b_{i}$ for $i=1, \cdots, n$ be real numbers satisfying $\sum_{i=1}^{n}a_{i}=0$ and $\sum_{i=1}^{n}a_{i}^{2}=a.$ Then
	\begin{align}
		\Big|\sum_{i=1}^{n}a_{i}{b_{i}}^2\Big|\le \sqrt{\sum_{i=1}^{n}{b_{i}}^4-\frac{(\sum_{i=1}^{n} b_{i}^{2})^{2}}{n}}\sqrt{a}.
	\end{align}
\end{lem}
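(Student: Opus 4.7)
The cleanest route is \emph{not} Lagrange multipliers but a one-line reduction to Cauchy--Schwarz, using the constraint $\sum_i a_i = 0$ to re-center the quantities $b_i^2$. Namely, the plan is to observe that for any constant $c \in \mathbb{R}$,
\[
\sum_{i=1}^n a_i b_i^2 \;=\; \sum_{i=1}^n a_i (b_i^2 - c),
\]
since $c\sum_i a_i = 0$. I would then choose $c$ to be the mean value $\bar{b^2} := \frac{1}{n}\sum_i b_i^2$, and apply the Cauchy--Schwarz inequality to the vectors $(a_i)$ and $(b_i^2 - \bar{b^2})$, which yields
\[
\Big|\sum_i a_i b_i^2\Big| \;\le\; \sqrt{\sum_i a_i^2}\;\sqrt{\sum_i (b_i^2 - \bar{b^2})^2} \;=\; \sqrt{a}\;\sqrt{\sum_i (b_i^2 - \bar{b^2})^2}.
\]
The final step is a routine expansion of the centered sum of squares,
\[
\sum_i (b_i^2 - \bar{b^2})^2 \;=\; \sum_i b_i^4 - 2\bar{b^2}\sum_i b_i^2 + n\,\bar{b^2}^{\,2} \;=\; \sum_i b_i^4 - \frac{(\sum_i b_i^2)^2}{n},
\]
after substituting $\bar{b^2} = \frac{1}{n}\sum_i b_i^2$. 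Combining the two displays gives the stated inequality.

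There is essentially no genuine obstacle: the only non-obvious step is the trick of subtracting the mean $\bar{b^2}$ before invoking Cauchy--Schwarz. (Although the paper advertises these algebraic lemmas as provable by Lagrange multipliers, the Cauchy--Schwarz argument above is shorter and avoids any case analysis on critical points, so I would present it that way.) Equality holds precisely when $(a_i)$ is proportional to $(b_i^2 - \bar{b^2})$, which I would record as a remark in case it is needed later in the rigidity analysis.
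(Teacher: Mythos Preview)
Your argument is correct. The centering trick $\sum_i a_i b_i^2 = \sum_i a_i(b_i^2 - \bar{b^2})$ followed by Cauchy--Schwarz is valid, and the expansion of $\sum_i (b_i^2 - \bar{b^2})^2$ is the standard variance identity; nothing is missing.

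As for comparison: the paper does not actually write out a proof of this lemma, merely remarking that the three algebraic lemmas ``can be proven by using the method of Lagrange multipliers.'' Your Cauchy--Schwarz route is genuinely different from that advertised approach and is strictly preferable here: it is a two-line computation with no optimization, no analysis of critical points, and it immediately identifies the equality case (proportionality of $(a_i)$ and $(b_i^2 - \bar{b^2})$). The Lagrange-multiplier method would instead fix the $b_i$ and maximize $\sum_i a_i b_i^2$ over the sphere $\sum a_i^2 = a$ intersected with the hyperplane $\sum a_i = 0$, arriving at the same extremizer but with more bookkeeping. Since the paper only uses the inequality (not any finer structure of the extremizers), your shortcut loses nothing.
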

\begin{lem}[{cf. \cite[Lemma 3.3]{Che02}}]\label{ineq}
	Let $b_{i}$ for $i=1, \cdots, n$ be real numbers satisfying $\sum_{i=1}^{n}b_{i}=0$ and $\sum_{i=1}^{n}b_{i}^{2}=B.$ Then
	\begin{align}
		\sum_{i=1}^{n}{b_{i}}^4-\frac{B^2}{n}\le \frac{(n-2)^2}{n(n-1)}B^2.
	\end{align}
\end{lem}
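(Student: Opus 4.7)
The plan is to optimize $f(b) := \sum_{i=1}^n b_i^4$ over the compact ellipsoidal set $K = \{b \in \mathbb{R}^n : \sum_i b_i = 0,\ \sum_i b_i^2 = B\}$ via Lagrange multipliers and then classify the maximizers explicitly. At any critical point of $f$ on $K$ there exist $\lambda, \mu \in \mathbb{R}$ with $4 b_i^3 = 2\mu b_i + \lambda$ for every $i$, so each $b_i$ is a real root of the single cubic $P(x) = 4x^3 - 2\mu x - \lambda$. Consequently the multiset $\{b_i\}$ assumes at most three distinct values, so the problem reduces to the two- and three-value cases.

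In the two-value case, I would suppose $k$ of the $b_i$ equal $a$ and the remaining $n-k$ equal $b$, with $1 \le k \le n-1$. Solving $ka + (n-k)b = 0$ and $ka^2 + (n-k)b^2 = B$ yields $a^2 = (n-k)B/(nk)$ and $b^2 = kB/\bigl(n(n-k)\bigr)$; then, using the identity $k^3 + (n-k)^3 = n\bigl(n^2 - 3k(n-k)\bigr)$, one computes
\[
\sum_{i=1}^n b_i^4 = k a^4 + (n-k) b^4 = \frac{B^2}{n^2}\cdot\frac{k^3 + (n-k)^3}{k(n-k)} = \frac{nB^2}{k(n-k)} - \frac{3B^2}{n}.
\]
Since $k(n-k) - (n-1) = (k-1)(n-1-k) \ge 0$ on $\{1,\dots,n-1\}$, the minimum $k(n-k) = n-1$ is attained precisely at $k = 1$ or $k = n-1$, giving $\sum_i b_i^4 \le nB^2/(n-1) - 3B^2/n = (n^2 - 3n + 3)B^2/\bigl(n(n-1)\bigr)$. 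Subtracting $B^2/n$ produces exactly the claimed bound $(n-2)^2 B^2/\bigl(n(n-1)\bigr)$, with equality when $n-1$ of the $b_i$ coincide and the remaining one is opposite in sign.

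The main obstacle will be the three-value case, in which $\{b_i\}=\{r_1 < r_2 < r_3\}$ with multiplicities $n_1,n_2,n_3$. My plan is to exclude such critical points as genuine maxima by a second-order test on the tangent space $T_b K = \{s \in \mathbb{R}^n : \sum s_i = 0,\ \sum b_i s_i = 0\}$. Indeed, the restricted Hessian of $f$ is diagonal with entries $12 b_i^2 - 2\mu = P'(b_i)$, which is positive at the outer roots $r_1, r_3$ and negative at the middle root $r_2$. Whenever $n_1 \ge 2$ or $n_3 \ge 2$, one can concentrate $s$ on the corresponding outer-root indices (balanced so that $\sum s_i = 0$ and $\sum b_i s_i = 0$) to produce a direction in which the Hessian is strictly positive, contradicting maximality; in the residual subcase $n_1 = n_3 = 1$, Vieta combined with the two constraints forces $r_2 = 0$ and $\mu = B$, and an analogous balanced perturbation between the outer singletons and the middle block again produces a positive Hessian direction for $n \ge 4$ (while for $n \le 3$ the configuration is degenerate and already satisfies the two-value bound). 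Thus every maximizer of $f$ on $K$ is a two-value configuration, and the two-value analysis above concludes the proof.
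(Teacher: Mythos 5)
Your argument is correct and follows exactly the route the paper indicates for this lemma (the paper does not write out a proof but cites Cheng and notes that the method of Lagrange multipliers applies): the first-order conditions reduce to at most three distinct values, the two-value computation yields the bound $\frac{n^2-3n+3}{n(n-1)}B^2$ with the extremum at $k=1$ or $k=n-1$, and your second-order exclusion of genuine three-value maximizers (together with the degenerate case $n\le 3$, where the inequality is an identity) is sound. No gaps.
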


Ge-Tang \cite{GT08} and Lu \cite{Lu11} proved the following DDVV inequality independently and differently.
\begin{lem}[\cite{GT08, Lu11}]\label{lem-DDVV}
	Let $B_{1},\cdots,B_{p}$ be symmetric $(n\times n)-$matrices ($p\ge 2$). Then
	\begin{align}\label{DDVVineq}
		\sum_{r,s=1}^{p} N([B_{r}, B_{s}])\le \big(\sum_{r=1}^{p} N(B_{r})\big)^{2},
	\end{align}
	where the equality holds if and only if under some rotation all $B_{r}^{'}s$ are zero except two matrices, which can be written as
	\begin{align*}
		\tilde {B}_{1}=P\begin{pmatrix}
			0&  \mu &  0&  \cdots &0 \\
			\mu & 0 &  0&  \cdots &0 \\
			0& 0 & 0 & 0 & 0\\
			\vdots & \vdots  &\vdots   &\ddots   &\vdots  \\
			0& 0 &  0&  \cdots &0
		\end{pmatrix}
		P^t,\quad
		\tilde {B}_{2}=P\begin{pmatrix}
			\mu &  0 &  0&  \cdots &0 \\
			0 & -\mu  &  0&  \cdots &0 \\
			0& 0 & 0 & 0 & 0\\
			\vdots & \vdots  &\vdots   &\ddots   &\vdots  \\
			0& 0 &  0&  \cdots &0
		\end{pmatrix}
		P^t,
	\end{align*}
	for some  $P\in O(n)$.
\end{lem}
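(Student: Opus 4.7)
The plan is to exploit two symmetries of the inequality. Both sides are invariant under simultaneous orthogonal conjugation $B_r \mapsto P B_r P^t$ with $P \in O(n)$, and under orthogonal mixing in the $r$-index, $B_r \mapsto \sum_s O_{rs} B_s$ with $O \in O(p)$: invariance on the right is immediate, and on the left it follows because the Lie bracket transforms covariantly and $N$ is conjugation-invariant. A useful preliminary reformulation comes from the identity
\[
N([B_r, B_s]) = -\tr([B_r, B_s]^2) = 2\tr(B_r^2 B_s^2) - 2\tr((B_r B_s)^2),
\]
which holds because $[B_r, B_s]$ is skew-symmetric whenever the $B_r$ are symmetric; this turns the left-hand side into a polynomial in the entries of the $B_r$.

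Next I would consider the scale-invariant ratio
\[
F(B_1,\ldots,B_p) = \frac{\sum_{r,s=1}^p N([B_r, B_s])}{\bigl(\sum_{r=1}^p N(B_r)\bigr)^2}
\]
and seek its supremum on the compact set $\sum_r N(B_r) = 1$. Using the $O(p)$ symmetry I may diagonalize the Gram-type matrix $(\tr(B_r B_s))_{r,s}$ so that $\tr(B_r B_s) = 0$ for $r \neq s$; using the $O(n)$ symmetry I may additionally conjugate so that $B_1$ is diagonal. At a maximizer, the Lagrange multiplier conditions applied to variations of each $B_r$ produce a system of matrix identities linking the iterated commutators and anticommutators of the $B_r$.

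The main obstacle, which is the heart of the arguments of Ge--Tang and Lu, is to extract from these critical-point equations the structural conclusion that at most two of the $B_r$ can be nonzero at a maximizer, and that the two surviving matrices must take the $2 \times 2$ block normal form displayed in the equality case. The delicate point is ruling out configurations in which more than two $B_r$ could conspire to enlarge the commutator sum; this requires showing that a third nonzero $B_r$ either forces the gradient conditions to fail or can be rotated away via the $O(p)$ mixing without changing the value of $F$. Once this reduction is in place, the problem collapses to a direct computation with two explicit matrices in a $2 \times 2$ block: substituting $B_1, B_2$ of the stated form with parameter $\mu$, one finds $N(B_1) = N(B_2) = 2\mu^2$ and $N([B_1,B_2]) = 8\mu^4$, so $\sum_{r,s} N([B_r, B_s]) = 16\mu^4 = \bigl(\sum_r N(B_r)\bigr)^2$. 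Hence $\sup F = 1$, attained precisely in this normal form (up to the free $O(n)$ conjugation by $P$ and $O(p)$ mixing), proving the inequality and the equality characterization simultaneously.
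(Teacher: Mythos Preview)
The paper does not prove this lemma; it is simply quoted from Ge--Tang \cite{GT08} and Lu \cite{Lu11} as a tool, with no argument supplied. So there is no ``paper's own proof'' to compare against.

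As for your proposal itself: what you have written is an accurate outline of the strategy behind Lu's proof (symmetry reduction via $O(n)\times O(p)$, compactness of the normalized problem, Lagrange conditions at a maximizer), and your verification of the equality case is correct. But it is a sketch, not a proof, and you say so yourself: the sentence beginning ``The main obstacle, which is the heart of the arguments of Ge--Tang and Lu\ldots'' concedes precisely the step that carries all the difficulty. Reducing from $p$ matrices to two, and then forcing the surviving pair into the $2\times 2$ block form, is the entire content of the DDVV conjecture; everything else in your outline is routine. Your proposal does not indicate how to carry out that reduction --- neither the inductive SDP-style argument of Ge--Tang nor the detailed spectral analysis of the critical-point equations that Lu performs. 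So as written, this is a correct description of what a proof must do, with the actual proof deferred to the cited references --- which, in fairness, is exactly what the paper does too.
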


There are other estimates for $\sum_{r,s}N([B_r, B_s])$.
\begin{lem}[cf. \cite{Ito75}*{Proposition 1}]\label{lem-Itoh}
	Let $B_{1},\cdots,B_{p}$ be symmetric $(n\times n)-$matrices. Then
	\begin{align}\label{Itoh-ineq}
		\sum_{r,s=1}^{p} N([B_{r}, B_{s}])\le n\sum_{r,s=1}^{p} \big(\tr(B_rB_s)\big)^2.
	\end{align}
\end{lem}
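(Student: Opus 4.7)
Sketch. The inequality \eqref{Itoh-ineq} is a purely algebraic statement about $p$-tuples of real symmetric $n\times n$ matrices; both sides are invariant under the simultaneous orthogonal conjugation $B_r\mapsto OB_rO^t$ for $O\in O(n)$, so no differential-geometric input is needed. I would proceed in four steps.

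First, I would reduce to the traceless case. Since $[B_r,B_s]=[B_r-\alpha_r I,\,B_s-\alpha_s I]$ for any constants, replacing each $B_r$ by its traceless part $B_r^0:=B_r-\frac{\tr B_r}{n}I$ leaves the LHS of \eqref{Itoh-ineq} unchanged and, by a direct expansion, can only decrease the RHS. Hence we may assume $\tr B_r=0$ for every $r$.

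Next, since commutators of symmetric matrices are skew-symmetric, one has the identity
\begin{equation*}
N([B_r,B_s])=-\tr([B_r,B_s]^2)=2\tr(B_r^{\,2}B_s^{\,2})-2\tr\bigl((B_rB_s)^2\bigr),
\end{equation*}
so both sides of \eqref{Itoh-ineq} become polynomial functions of the entries of the $B_r$. For a fixed pair $(r,s)$, diagonalize $B_r=\mathrm{diag}(\lambda_1,\ldots,\lambda_n)$ in an orthonormal basis, and let $(b_{ij})$ denote the entries of $B_s$ in that basis; then
\begin{equation*}
N([B_r,B_s])=\sum_{i,j}(\lambda_i-\lambda_j)^2 b_{ij}^{\,2}, \qquad \tr(B_rB_s)=\sum_i\lambda_i b_{ii}.
\end{equation*}

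The core step is to establish a sharp one-pair estimate of the form
\begin{equation*}
N([B_r,B_s])\le n\bigl(\tr(B_rB_s)\bigr)^2+\mathcal R(B_r)+\mathcal R(B_s),
\end{equation*}
where the remainder terms $\mathcal R(B_r)$, $\mathcal R(B_s)$ depend only on a single matrix. The idea is to split $\sum_{i,j}(\lambda_i-\lambda_j)^2b_{ij}^{\,2}$ into off-diagonal ($i\neq j$) and diagonal contributions: on the off-diagonal piece one uses $(\lambda_i-\lambda_j)^2\le 2(\lambda_i^2+\lambda_j^2)$ and recombines via Cauchy--Schwarz in the entries of $B_s$, while on the diagonal piece one uses Cauchy--Schwarz on $\sum_i\lambda_i b_{ii}=\tr(B_rB_s)$, together with the tracelessness $\sum_i\lambda_i=0$ from Step~1, to extract $(\tr(B_rB_s))^2$ with the correct dimensional factor $n$. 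Summing the resulting per-pair bound over $r,s\in\{1,\ldots,p\}$, the remainders $\mathcal R(B_r)$ (each appearing $p$ times) are dominated by the diagonal $r=s$ terms of $n\sum_{r,s}(\tr(B_rB_s))^2=n\sum_r(\tr B_r^2)^2+n\sum_{r\neq s}(\tr(B_rB_s))^2$, giving \eqref{Itoh-ineq}.

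The principal obstacle lies in the third step: calibrating the one-pair estimate so that the remainder is genuinely of the form $\mathcal R(B_r)+\mathcal R(B_s)$ (no mixed products of distinct matrices) and so that the constant in front of $(\tr(B_rB_s))^2$ is exactly $n$. A plain diagonalization plus Cauchy--Schwarz loses dimension factors, and neither the DDVV bound (Lemma~\ref{lem-DDVV}) nor the obvious estimate $N([B_r,B_s])\le 4\bigl[\tr(B_r^2)\tr(B_s^2)-(\tr(B_rB_s))^2\bigr]$ immediately produces the right constant after summation when $p$ is large. The sharpening is achieved by a Lagrange-multiplier/optimization argument on the spectrum of $B_r$, subject to the constraints $\sum_i\lambda_i=0$ and $\sum_i\lambda_i^2=\tr(B_r^2)$; this is the technical heart of Itoh's original proof in \cite{Ito75}.
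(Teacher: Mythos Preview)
The paper does not prove this lemma; it is stated with a bare citation to \cite{Ito75}*{Proposition~1} and used as a black box. So there is no in-paper proof to compare your sketch against, and I can only assess whether your outline would actually produce a proof.

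Steps~1 and~2 are fine. The real problem is your Step~3/4 bookkeeping. A one-pair bound of the shape
\[
N([B_r,B_s])\le n\bigl(\tr(B_rB_s)\bigr)^2+\mathcal R(B_r)+\mathcal R(B_s)
\]
with single-matrix remainders cannot exist. Pick traceless $B_1,B_2$ with $\tr(B_1B_2)=0$ but $[B_1,B_2]\neq 0$; the inequality would force $\mathcal R(B_1)+\mathcal R(B_2)\ge N([B_1,B_2])>0$, so the remainders are genuinely positive in general. Summing your per-pair bound over all $(r,s)$ yields a total remainder $2p\sum_r\mathcal R(B_r)$, which you then say is ``dominated by the diagonal $r=s$ terms of $n\sum_{r,s}(\tr(B_rB_s))^2$''. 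But those diagonal terms are already part of the main RHS $n\sum_{r,s}(\tr(B_rB_s))^2$; you are counting them twice. Even if you weaken the constant in front of $(\tr(B_rB_s))^2$ to some $c<n$ and try to use the slack $(n-c)\sum_r(\tr B_r^2)^2$ to absorb the remainder, you would need $\mathcal R(B_r)\le \frac{n-c}{2p}(\tr B_r^2)^2$, a $p$-dependent condition that cannot be a property of a single matrix.

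The inequality \eqref{Itoh-ineq} is genuinely global in the family $\{B_r\}$ and cannot be decoupled pairwise. A workable route is to use the $O(p)$-invariance of both sides to rotate so that $\tr(B_rB_s)=0$ for $r\ne s$, and then for each fixed $r$ bound $\sum_s N([B_r,B_s])$ by diagonalizing $B_r$ and using the Frobenius orthogonality of the remaining $B_s$'s collectively (not one at a time); this is the line Itoh follows. Your Lagrange-multiplier remark at the end points in a reasonable direction, but the per-pair scaffolding preceding it has to be discarded.
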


\begin{lem}[cf. \cite{Sim68}*{Theorem 1} and \cite{LL92}*{Proposition 1}]\label{lem-LL}
	Let $B_{1},\cdots,B_{p}$ be symmetric $(n\times n)-$matrices. Then
	\begin{align}\label{LL-ineq}
		\sum_{r,s=1}^{p} N([B_{r}, B_{s}])+\sum_{r,s=1}^{p} \big(\tr(B_rB_s)\big)^2 \le \big(1+\frac{1}{2}\sgn(p-1)\big) \big(\sum_{r=1}^{p} N(B_{r})\big)^{2},
	\end{align}
	where the equality holds if and only if either $p=1$, or $p\ge 2$ and under some rotation all $B_{r}^{'}s$ are zero except two matrices, which can be written as
	\begin{align*}
		\tilde {B}_{1}=P\begin{pmatrix}
			0&  \mu &  0&  \cdots &0 \\
			\mu & 0 &  0&  \cdots &0 \\
			0& 0 & 0 & 0 & 0\\
			\vdots & \vdots  &\vdots   &\ddots   &\vdots  \\
			0& 0 &  0&  \cdots &0
		\end{pmatrix}
		P^t,\quad
		\tilde {B}_{2}=P\begin{pmatrix}
			\mu &  0 &  0&  \cdots &0 \\
			0 & -\mu  &  0&  \cdots &0 \\
			0& 0 & 0 & 0 & 0\\
			\vdots & \vdots  &\vdots   &\ddots   &\vdots  \\
			0& 0 &  0&  \cdots &0
		\end{pmatrix}
		P^t,
	\end{align*}
	for some  $P\in O(n)$.
\end{lem}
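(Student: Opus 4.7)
The plan is to split on $p$. The case $p=1$ is immediate since $[B_1,B_1]=0$ gives $\sum_{r,s}N([B_r,B_s])=0$, while $\sum_{r,s}(\tr(B_rB_s))^2=(\tr B_1^2)^2=N(B_1)^2$, matching the right-hand side $(1+0)\,N(B_1)^2$; equality holds automatically.

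For $p\ge 2$ the strategy is to combine Lemma \ref{lem-DDVV} with the following sharpened pointwise commutator estimate: for any two symmetric $n\times n$ matrices $A,B$,
\[
 N([A,B])\ \le\ 2\,N(A)N(B)\ -\ 2\,(\tr(AB))^2.
\]
To prove this, I would first diagonalize $A=\mathrm{diag}(a_1,\dots,a_n)$, obtaining $N([A,B])=\sum_{i,j}(a_i-a_j)^2 b_{ij}^2$; since $(a_i-a_j)^2\le 2(a_i^2+a_j^2)\le 2N(A)$, the crude bound $N([A,B])\le 2N(A)N(B)$ follows at once. To obtain the $-2(\tr(AB))^2$ sharpening, orthogonally decompose $B=\lambda A+B'$ with $\lambda=\tr(AB)/N(A)$ (the case $A=0$ being trivial). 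One checks that $[A,B]=[A,B']$, $\tr(AB')=0$, and $N(B')=N(B)-(\tr(AB))^2/N(A)$, so applying the crude bound to the pair $(A,B')$ produces the refined inequality.

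Summing the refined inequality over $r,s$ yields
\[
 \sum_{r,s}N([B_r,B_s])\ +\ 2\sum_{r,s}(\tr(B_rB_s))^2\ \le\ 2\Bigl(\sum_r N(B_r)\Bigr)^2,
\]
and averaging this against the DDVV inequality $\sum_{r,s}N([B_r,B_s])\le\bigl(\sum_r N(B_r)\bigr)^2$ of Lemma \ref{lem-DDVV} (each with weight $1/2$) produces exactly the desired $3/2$-bound. Equality in the average forces equality simultaneously in both ingredients; Lemma \ref{lem-DDVV} then constrains all but two of the $B_r$ to vanish with the remaining pair taking the specified rank-two form, and a direct substitution confirms this configuration also attains equality in the auxiliary inequality, finishing the characterization.

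The main obstacle is deriving the sharpened pointwise commutator bound together with its equality analysis: the decomposition trick $B=\lambda A+B'$ is the crucial step, and one must simultaneously track when both the crude pointwise estimate $(a_i-a_j)^2\le 2N(A)$ and the projection identity $N(B')=N(B)-(\tr(AB))^2/N(A)$ are tight. Once this step is in place, the combination with Lemma \ref{lem-DDVV} is a one-line algebraic manipulation.
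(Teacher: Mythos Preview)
The paper does not supply its own proof of this lemma; it is quoted from the literature (Simons and Li--Li) and used as a black box. Your argument is correct and complete: the projection trick $B=\lambda A+B'$ with $\lambda=\tr(AB)/N(A)$ does yield the sharpened pointwise bound $N([A,B])\le 2N(A)N(B)-2(\tr(AB))^2$, summing over all pairs gives your auxiliary inequality, and averaging it against Lemma~\ref{lem-DDVV} produces the $3/2$ constant together with the correct equality characterization (both sides of the auxiliary inequality and of DDVV are $O(p)$-invariant, so one may pass to the rotated frame before verifying that the canonical pair $\tilde B_1,\tilde B_2$ saturates the auxiliary estimate, as you indicate).

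It is worth flagging, however, that your route is historically anachronistic and logically heavier than necessary. Li--Li's 1992 argument is entirely elementary, building directly on the Chern--do Carmo--Kobayashi commutator estimate; you instead invoke the DDVV inequality, a substantially deeper result settled only in 2008. Since the paper already records DDVV as Lemma~\ref{lem-DDVV}, this is a legitimate shortcut here and not circular---but your proof of Lemma~\ref{lem-LL} is then not self-contained in the way the original is. The compensating advantage is economy: once DDVV is available, your derivation is only a few lines, and the equality configuration is inherited verbatim from Lemma~\ref{lem-DDVV} rather than requiring a separate case analysis.
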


By using a Sobolev inequality in \cite{HS74}, we have
\begin{prop}[cf. \cites{Xu94,CW21}]\label{prop-int}
	Let $M$ be an $n(\ge 3)$-dimensional closed submanifold in $\mathbb{S}^{n+p}$. Then for all $t>0$ and $f\in C^1(M),$  $f\geq 0$, we have
	\begin{equation}\label{Sineq}
		\int_M|\nabla f|^2\geq \mathfrak{A}(n,t)\|f^2\|_{{n}/{n-2}}-\mathfrak{B}(n,t)\int_M (1+H^2)f^2,
	\end{equation}
	where
	\begin{equation}\label{const-def-ABC}
		\mathfrak{A}(n,t)=\frac{(n-2)^2}{4(n-1)^2(1+t)}\frac{1}{C^2(n)}, 
		\quad \mathfrak{B}(n,t)=\frac{(n-2)^2}{4(n-1)^2t} , \quad C(n)=2^{n}\frac{(n+1)^{1+1/n}}{(n-1)\omega_n^{1/n}},
	\end{equation}
	and $\omega_n$ is the volume of the unit ball in $\mathbb{R}^n$.
\end{prop}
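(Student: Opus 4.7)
The plan is to derive the inequality from the Michael--Simon--Hoffman--Spruck Sobolev inequality \cite{HS74}, applied to $M$ viewed as a submanifold of $\mathbb{R}^{n+p+1}$ through the canonical embedding $\mathbb{S}^{n+p}\hookrightarrow\mathbb{R}^{n+p+1}$. The key geometric input is that, since the position vector $x$ is the outward unit normal of $\mathbb{S}^{n+p}$, the second fundamental form of $M$ in $\mathbb{R}^{n+p+1}$ decomposes as $\overline{h}(X,Y)=h(X,Y)-g(X,Y)x$, and so the corresponding mean curvature vector $\overline{\mathbf{H}}=\mathbf{H}-x$ (with $\mathbf{H}\perp x$) satisfies $|\overline{\mathbf{H}}|^2=1+H^2$. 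This identity is what converts the Euclidean mean-curvature term appearing in Hoffman--Spruck into the factor $(1+H^2)$ in the target inequality.

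Next I would apply the Hoffman--Spruck inequality
$$\left(\int_M g^{n/(n-1)}\right)^{(n-1)/n}\le C(n)\int_M\bigl(|\nabla g|+|\overline{\mathbf{H}}|\,g\bigr)$$
with the same constant $C(n)$ as in \eqref{const-def-ABC}, and the substitution $g=f^{2(n-1)/(n-2)}$, for which $|\nabla g|=\tfrac{2(n-1)}{n-2}f^{n/(n-2)}|\nabla f|$. Two applications of Cauchy--Schwarz, pairing $f^{n/(n-2)}$ with $|\nabla f|$ in the first term and with $\sqrt{1+H^2}\,f$ in the second, followed by dividing through by $\bigl(\int_M f^{2n/(n-2)}\bigr)^{1/2}$ and using the exponent identity $\tfrac{n-1}{n}-\tfrac12=\tfrac{n-2}{2n}$, would yield
$$\left(\int_M f^{2n/(n-2)}\right)^{(n-2)/(2n)}\le C(n)\left[\tfrac{2(n-1)}{n-2}\|\nabla f\|_2+\left(\int_M(1+H^2)f^2\right)^{1/2}\right].$$

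Squaring this and then applying the elementary inequality $(a+b)^2\le(1+t)a^2+(1+1/t)b^2$ with $a=\bigl(\int_M(1+H^2)f^2\bigr)^{1/2}$ and $b=\tfrac{2(n-1)}{n-2}\|\nabla f\|_2$ produces
$$\|f^2\|_{n/(n-2)}\le C^2(n)(1+1/t)\tfrac{4(n-1)^2}{(n-2)^2}\|\nabla f\|_2^2+C^2(n)(1+t)\int_M(1+H^2)f^2.$$
Multiplying through by $\mathfrak{A}(n,t)$ normalises the coefficient of $\|\nabla f\|_2^2$ to $1$, while the coefficient of $\int_M(1+H^2)f^2$ simplifies, via $(1+1/t)/(1+t)=1/t$, to exactly $\mathfrak{B}(n,t)$; rearranging then gives \eqref{Sineq}. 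The argument is essentially bookkeeping: the only substantive choices are the exponent $2(n-1)/(n-2)$ in the substitution and the placement of $t$ in the Young-type step, both of which are dictated by the form of the target inequality, and the main point where care is needed is matching the constants exactly, which rests on the identity $(1+1/t)/(1+t)=1/t$.
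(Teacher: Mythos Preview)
Your approach is exactly the standard one, and it is what the paper intends: the proposition is stated without proof, introduced only by ``By using a Sobolev inequality in \cite{HS74}'' together with the citation to \cites{Xu94,CW21}, where precisely this derivation appears.

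There is, however, one slip in your final paragraph. With your assignment $a=\bigl(\int_M(1+H^2)f^2\bigr)^{1/2}$ and $b=\tfrac{2(n-1)}{n-2}\|\nabla f\|_2$, multiplying your displayed inequality by $\mathfrak{A}(n,t)$ gives coefficient $\tfrac{1+1/t}{1+t}=\tfrac{1}{t}$ (not $1$) on $\|\nabla f\|_2^2$ and coefficient $\tfrac{(n-2)^2}{4(n-1)^2}$ (not $\mathfrak{B}(n,t)$) on the mean-curvature term. The fix is simply to swap the roles: take $a=\tfrac{2(n-1)}{n-2}\|\nabla f\|_2$ and $b=\bigl(\int_M(1+H^2)f^2\bigr)^{1/2}$, so that $(1+t)$ sits on the gradient term. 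Then $\mathfrak{A}(n,t)$ does normalise that coefficient to $1$, and the identity $(1+1/t)/(1+t)=1/t$ applied to the other term yields exactly $\mathfrak{B}(n,t)$, as you intended.
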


The following is a Kato-type inequality.
\begin{lem}[cf. \cites{Xu94,XY11}]\label{gradhrho}
	Let $M$ be an $n$-dimensional submanifold in $\mathbb{S}^{n+p}.$ Then for any $\epsilon\neq 0$, we have
	\begin{align}\label{rho}
 |\nabla \tilde h |^2 \ge
\frac{n+2}{n}\left | \nabla \rho _{\epsilon } \right |^2,
\end{align}
	where $ |\nabla \tilde{h} |^{2}=\sum_{\alpha ,i,j,k}(\tilde h_{ijk}^{\alpha })^{2},  \rho_{\epsilon }=(\rho^2+np\epsilon^2)^{1/2} >0$.
\end{lem}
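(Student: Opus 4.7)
The plan is to reduce the claim to a pointwise algebraic Kato-type inequality and apply a sharp algebraic lemma for totally symmetric trace-free tensors. Differentiating $\rho_\epsilon^2=\rho^2+np\epsilon^2$ yields the identity $\rho_\epsilon\rho_{\epsilon,k}=\rho\rho_{,k}=\sum_{\alpha,i,j}\tilde h_{ij}^\alpha\tilde h_{ijk}^\alpha$, so the desired inequality is equivalent to
$$(n+2)\sum_k\Bigl(\sum_{\alpha,i,j}\tilde h_{ij}^\alpha\tilde h_{ijk}^\alpha\Bigr)^2\le n\rho_\epsilon^2\sum_{\alpha,i,j,k}(\tilde h_{ijk}^\alpha)^2.$$

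The trace-free condition $\sum_i\tilde h_{ii}^\alpha=0$ allows one to replace $\tilde h_{ijk}^\alpha$ by $h_{ijk}^\alpha$ in the contraction, and by the Codazzi equation \eqref{eqC}, $h_{ijk}^\alpha$ is totally symmetric in $(i,j,k)$. I would decompose it into its totally symmetric trace-free part $\hat h_{ijk}^\alpha$ plus a trace correction built from $\nabla\mathbf H$,
$$h_{ijk}^\alpha=\hat h_{ijk}^\alpha+\frac{n}{n+2}\bigl(H^\alpha_{,i}\delta_{jk}+H^\alpha_{,j}\delta_{ik}+H^\alpha_{,k}\delta_{ij}\bigr),$$
which after a short computation yields the orthogonal splitting $|\nabla\tilde h|^2=|\hat h|^2+\frac{2n(n-1)}{n+2}|\nabla\mathbf H|^2$. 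The essential algebraic input, responsible for the sharp factor $\frac{n+2}{n}$, is the bound for any totally symmetric trace-free $3$-tensor,
$$\sum_{i,k}(\hat h_{iik}^\alpha)^2\le\frac{n}{n+2}\sum_{i,j,k}(\hat h_{ijk}^\alpha)^2,$$
which is proven by partitioning $|\hat h^\alpha|^2$ according to the multiplicity pattern of repeated indices and applying Cauchy--Schwarz to the trace-free relation $\hat h_{kkk}^\alpha=-\sum_{i\ne k}\hat h_{iik}^\alpha$.

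The full estimate then follows by combining this algebraic lemma with a per-$\alpha$ diagonalization of $\tilde h^\alpha$ and the Cauchy--Schwarz reduction $|\nabla\rho|^2\le\sum_\alpha|\nabla\rho^\alpha|^2$, where $\rho^\alpha=|\tilde h^\alpha|$, coming from $\rho\nabla\rho=\sum_\alpha\rho^\alpha\nabla\rho^\alpha$. The main technical obstacle is the careful accounting of the cross terms produced by the $\nabla\mathbf H$ correction above; here the regularization $\rho_\epsilon$ in place of $\rho$ plays its role through the identity $|\nabla\rho_\epsilon|^2=(\rho^2/\rho_\epsilon^2)|\nabla\rho|^2\le|\nabla\rho|^2$, providing the slack needed to absorb the correction terms into the right-hand side without spoiling the sharp Kato constant.
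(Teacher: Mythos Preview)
The paper does not supply its own proof of this lemma; it is quoted with a bare citation to \cites{Xu94,XY11}, so there is no in-paper argument to compare against.

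Your outline is correct and contains all the essential ingredients of the standard proof. In particular: (i) the reduction to a pointwise inequality via $\rho_\epsilon\nabla\rho_\epsilon=\tfrac12\nabla\rho^2$; (ii) the orthogonal decomposition $\tilde h_{ijk}^\alpha=\hat h_{ijk}^\alpha+T_{ijk}^\alpha$ yielding $|\nabla\tilde h|^2=|\hat h|^2+\tfrac{2n(n-1)}{n+2}|\nabla^\perp\mathbf H|^2$; (iii) the sharp algebraic Kato bound $\sum_{i,k}(\hat h_{iik}^\alpha)^2\le\tfrac{n}{n+2}|\hat h^\alpha|^2$ for totally symmetric trace-free $3$-tensors, whose multiplicity-partition proof you sketch is valid; and (iv) the per-$\alpha$ diagonalization of $\tilde h^\alpha$ combined with the Cauchy--Schwarz reduction $|\nabla\rho_\epsilon|^2\le\sum_\alpha|\nabla\rho^\alpha_\epsilon|^2$ (with $(\rho^\alpha_\epsilon)^2=(\rho^\alpha)^2+n\epsilon^2$, so that $\sum_\alpha(\rho^\alpha_\epsilon)^2=\rho_\epsilon^2$). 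Chaining these gives $|\nabla\rho^\alpha_\epsilon|^2\le\tfrac{n}{n+2}|\nabla\tilde h^\alpha|^2$ for each $\alpha$, and summing finishes.

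One small correction to your closing paragraph: the $\epsilon$-regularization is \emph{not} what absorbs the $\nabla\mathbf H$ correction terms. Those cross terms vanish cleanly by the orthogonality in (ii), and the nonnegative remainder $\tfrac{2n(n-1)}{n+2}|\nabla^\perp\mathbf H|^2$ is simply discarded. The sole role of $\epsilon$ is to guarantee $\rho_\epsilon>0$ and $\rho^\alpha_\epsilon>0$, so that the divisions and the per-$\alpha$ Cauchy--Schwarz step are legitimate even at points where some $\rho^\alpha$ vanishes; no additional ``slack'' is required to close the sharp constant.
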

Since $|\nabla \rho_\epsilon^{k}|^{2} =k^2 \rho_\epsilon^{2k-2}|\nabla \rho_\epsilon|^2$, it follows from  \eqref{Sineq} and \eqref{rho} that
\begin{equation}\label{ineqrho}
	k^2\int_{M} \rho_\epsilon^{2k-2}|\nabla \tilde h|^2
	\ge \frac{n+2}{n}\int_M |\nabla \rho_\epsilon^{k}|^{2}\ge  \frac{n+2}{n}\left \{ \mathfrak{A}(n,t)\|\rho_\epsilon^{2k}\|_{{n}/{n-2} }
	-\mathfrak{B}(n,t)\int_{M}(1+H^2) \rho_\epsilon^{2k}\right \}.
\end{equation}

\section{Examples of $k$-extremal submanifolds}
Wu (cf. \cite[Theorem 1.1]{Wu09}) calculated the Euler-Lagrangian equation of $F_k$ and showed that
$M^n$ is an $k$-extremal submanifold of $\mathbb{S}^{n+p}$ if and only if 
	\begin{equation}\label{kextremaleq}
		\begin{aligned}
			&\phantom{+}\rho^{2k-2}\big[\sum_{\beta}\tr(A_\alpha A_\beta^2)-
			\sum_{\beta}H^{\beta }\sigma_{\alpha\beta }-\frac{n}{2k}\rho^2H^{\alpha } \big]\\
			&+(n-1)H^{\alpha }\Delta (\rho^{2k-2})+2(n-1)\sum_{i}(\rho^{2k-2})_{i}H_{i}^{\alpha }\\
			&+(n-1)\rho^{2k-2}\Delta^{\perp }H^{\alpha }-\square^\alpha(\rho^{2k-2})=0
		\end{aligned}
	\end{equation}
	holds for $n+1\le \alpha \le n+p$.
	Here $\square^\alpha: C^{\infty} \to \mathbb{R}$ defined by
	\begin{equation}
		\square^\alpha f=(nH^\alpha \delta_{ij}-h_{ij}^\alpha)f_{ij}.
	\end{equation}
	When $M$ is closed, $\square^\alpha$ is a self-adjoint operator (cf. \cite{CY77}), i.e., \begin{align}\label{selfadjoint}
		\int_{M}g \square^\alpha f=\int_{M}f\square^\alpha g.
	\end{align}

Special cases of \eqref{kextremaleq} were obtained by Li \cite{Li01} for $k=n/2$ and by Guo-Li \cite{GL07} for $k=1$, respectively.

\subsection{Einstein submanifolds}
Guo-Li-Wang (\cite[Theorem 4.2]{GLW01}) showed that any minimal Einstein submanifold in the unit sphere must be a Willmore submanifold. Similarly, we can prove
\begin{prop}\label{minieinstein}
	Let $M$ be an $n$-dimensional minimal submanifold in $\mathbb{S}^{n+p}$. If either $n\ge 3$ and $M$ is Einstein, or $n=2$ and $M$ has constant Gauss curvature, then $M$ is $k$-extremal for $k\ge 1$.
\end{prop}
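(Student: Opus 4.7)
The plan is to verify the Euler-Lagrange equation \eqref{kextremaleq} directly, exploiting how drastically minimality collapses it. Since $M$ is minimal we have $H^\alpha\equiv 0$ for all $\alpha$, which immediately kills the terms $\sum_\beta H^\beta\sigma_{\alpha\beta}$, $\frac{n}{2k}\rho^2 H^\alpha$, $(n-1)H^\alpha\Delta(\rho^{2k-2})$, $2(n-1)\sum_i(\rho^{2k-2})_i H^\alpha_i$ (because $\nabla^\perp\mathbf{H}=0$), and $(n-1)\rho^{2k-2}\Delta^\perp H^\alpha$. What remains is the identity
\begin{equation*}
\rho^{2k-2}\sum_\beta \tr(A_\alpha A_\beta^2)-\square^\alpha(\rho^{2k-2})=0,
\end{equation*}
and I will prove both terms vanish separately.

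For the second term, I will show that $\rho^2$ is constant on $M$. When $n\ge 3$ and $M$ is Einstein, Schur's theorem gives constant scalar curvature $R$, and combining with \eqref{scalar} and minimality yields $\rho^2 = n(n-1)-R$, a constant. When $n=2$, the hypothesis of constant Gauss curvature $K$ gives the same conclusion through $R=2K$ and $\rho^2 = 2-R$. In either case $\rho^{2k-2}$ is constant, so all its derivatives (and in particular $\square^\alpha(\rho^{2k-2})=-\sum_{ij}h_{ij}^\alpha(\rho^{2k-2})_{ij}$) vanish.

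For the first term, the key observation is the algebraic identity
\begin{equation*}
\sum_{\alpha,k} h_{ik}^\alpha h_{kj}^\alpha = \frac{S}{n}\delta_{ij}.
\end{equation*}
When $n\ge 3$, this follows from the Ricci formula \eqref{Ricci}: minimality reduces it to $R_{ij}=(n-1)\delta_{ij}-\sum_{\alpha,k}h_{ik}^\alpha h_{kj}^\alpha$, and the Einstein condition $R_{ij}=\frac{R}{n}\delta_{ij}$ together with $R=n(n-1)-S$ gives the claim. When $n=2$, the identity is automatic for any minimal surface: using $h_{11}^\alpha=-h_{22}^\alpha$ one checks by direct expansion that $\sum_{\alpha,k}h_{1k}^\alpha h_{k1}^\alpha=\sum_{\alpha,k}h_{2k}^\alpha h_{k2}^\alpha=S/2$ and $\sum_{\alpha,k}h_{1k}^\alpha h_{k2}^\alpha=0$. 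Substituting this identity gives
\begin{equation*}
\sum_\beta \tr(A_\alpha A_\beta^2)=\sum_{i,j}h_{ij}^\alpha\Big(\sum_{k,\beta}h_{jk}^\beta h_{ki}^\beta\Big)=\frac{S}{n}\sum_i h_{ii}^\alpha = SH^\alpha = 0.
\end{equation*}

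Combining the two vanishings, equation \eqref{kextremaleq} reduces to $0=0$, so $M$ is $k$-extremal for every $k\ge 1$. There is no serious obstacle in this argument; it is a bookkeeping exercise whose only subtlety is recognizing that the Einstein (respectively constant Gauss curvature) hypothesis simultaneously delivers two independent ingredients: constancy of $\rho^2$ through Schur's theorem, and the algebraic identity needed to force $\sum_\beta \tr(A_\alpha A_\beta^2)=SH^\alpha$, which then vanishes by minimality.
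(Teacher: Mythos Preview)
Your proof is correct and follows essentially the same route as the paper: both use minimality to collapse \eqref{kextremaleq} to the two terms $\rho^{2k-2}\sum_\beta\tr(A_\alpha A_\beta^2)$ and $\square^\alpha(\rho^{2k-2})$, observe that $R_{ij}=\frac{R}{n}\delta_{ij}$ with constant $R$ forces $\rho^2$ constant (killing the second term), and use the Ricci identity \eqref{Ricci} to rewrite $\sum_\beta\tr(A_\alpha A_\beta^2)$ in terms of $R_{ij}$ and $H^\alpha$ (killing the first). The only cosmetic difference is that the paper packages the last step as the single identity \eqref{unfoldrho}, whereas you unpack it into the equivalent statement $\sum_{\beta,k}h_{ik}^\beta h_{kj}^\beta=\frac{S}{n}\delta_{ij}$ and verify the $n=2$ case by hand.
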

\begin{proof}
	From \eqref{Ricci} one can check that
	\begin{equation}\label{unfoldrho}
		\sum_{\beta}\tr(A_\alpha A_\beta^2)-\sum_{\beta}H^\beta \sigma_{\alpha\beta}=\sum_{i,j}h_{ij}^\alpha R_{ij}-n(n-1)H^\alpha-(n-1)\sum_{\beta}H^\beta \sigma_{\alpha\beta}.
	\end{equation}
	Either $n\ge 3$ and $M$ is Einstein, or $n=2$ and $M$ has constant Gauss curvature, we always have $R_{ij}=\frac{R}{n}\delta_{ij}$ with constant $R$.
	Since $M$ is minimal, $\rho^2=S-nH^2=S=n(n-1)-R$ is also constant from \eqref{scalar}.
	By using \eqref{unfoldrho}, one can easily verify that \eqref{kextremaleq} holds.
\end{proof}
\begin{rem}
	When considering Willmore surface ($n=2, k=1$), we can remove the assumption of ``constant Gauss curvature'', that is, any minimal surface in $\mathbb{S}^{2+p}$ is a Willmore surface (cf. \cite[Remark 4.2]{GLW01}).
\end{rem}

\begin{ex}[Projective spaces]
Let $\mathbb{F}P^m$ be the projective space $\mathbb{F}P^m$ of real dimension $n=m\cdot d_{\mathbb{F}}$.
Here $\mathbb{F}$ denotes the field $\mathbb{R}$ of real numbers, the field $\mathbb{C}$ of complex numbers or the field $\mathbb{Q}$ of quaternions,
and $d_{\mathbb{R}}=1, d_{\mathbb{C}}=2$ and $d_{\mathbb{Q}}=4$.

It is well known that there exists a minimal embedding $\phi_1$, called the first standard embedding, of $\mathbb{F}P^m$ into $\mathbb{S}^{N}$ with $N=\frac{m(m+1)}{2}d+m-1$ (cf. \cite{Che73}*{Chapter 4.6}).

In these cases, $\mathbb{R}P^n$ is just the Veronese submanifold with constant sectional curvature $\frac{n}{2(n+1)}$;
$\mathbb{C}P^m$ are Einstein with constant holomorphic sectional curvature $\frac{2m}{m+1}$ and with constant Ricci curvatures $m$;
$\mathbb{Q}P^m$ is also Einstein with constant Ricci curvaturs $\frac{2m(m+2)}{m+1}$.

By Proposition \ref{minieinstein}, we know that $
\phi_1:\mathbb{F}P^m\to \mathbb{S}^N$ is $k$-extremal.
\end{ex}

\subsection{Isoparametric hypersurfaces}
Next we consider the isoparametric hypersurfaces in a sphere. In this case, all the principal curvatures $\lambda_i$ are constant, so are $\rho^2$ and $H$. We immediately derive
\begin{lem}
Let $M$ be an isoparametric hypersurface in $\mathbb{S}^{n+1}$. If $M$ is $k$-extremal ($k\ge 1$), then
\begin{equation}\label{eq-isoparametric}
	\sum_{i}\lambda_i^3+\frac{n^2}{2k}H^3-\Big(\frac{n}{2k}+1\Big)HS=0,
\end{equation}
where $S=\sum_{i}\lambda_i^2, H=\frac{1}{n}\sum_{i}\lambda_i$.
\end{lem}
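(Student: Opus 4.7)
The plan is to specialize the general Euler--Lagrange equation \eqref{kextremaleq} to codimension one and exploit the fact that on an isoparametric hypersurface every relevant scalar quantity is constant.

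Since $p=1$, there is a single normal index, which I will call $\alpha=n+1$. Write $A:=A_{n+1}$, whose eigenvalues are the principal curvatures $\lambda_1,\ldots,\lambda_n$, and note $H^{n+1}=H$ and $\sigma_{n+1,n+1}=S$. Because $M$ is isoparametric, the $\lambda_i$'s are constants, so $H$, $S$, and $\rho^2=S-nH^2$ are constants on $M$. The key observation is that every derivative term in \eqref{kextremaleq} therefore vanishes: $\Delta(\rho^{2k-2})=0$, $\sum_i(\rho^{2k-2})_i H^{n+1}_i=0$, and $\Delta^{\perp}H^{n+1}=0$. The operator $\square^{\alpha}f=(nH^{\alpha}\delta_{ij}-h_{ij}^{\alpha})f_{ij}$ annihilates any constant function, so $\square^{n+1}(\rho^{2k-2})=0$ as well. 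Equation \eqref{kextremaleq} thus collapses to the purely algebraic identity
$$
\rho^{2k-2}\Bigl[\tr(A^3)-HS-\tfrac{n}{2k}\rho^{2}H\Bigr]=0.
$$

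To close the argument I would substitute $\tr(A^3)=\sum_i\lambda_i^3$ and expand $\rho^2 H=(S-nH^2)H=HS-nH^3$, which rewrites the bracket as $\sum_i\lambda_i^3+\tfrac{n^2}{2k}H^3-(\tfrac{n}{2k}+1)HS$. On the open set where $\rho^2>0$ the factor $\rho^{2k-2}$ is nonzero (and when $k=1$ it is simply $1$), so the bracket must vanish, giving \eqref{eq-isoparametric}. It remains to check the umbilical case $\rho^2\equiv 0$: then $\lambda_i=H$ for all $i$, so $\sum_i\lambda_i^3=nH^3$ and $S=nH^2$, and substituting these into \eqref{eq-isoparametric} reduces it to $nH^3+\tfrac{n^2}{2k}H^3-(\tfrac{n}{2k}+1)nH^3=0$, which is a trivial cancellation. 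Hence the identity holds pointwise without any case distinction in the final statement.

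I do not anticipate any real obstacle here; the lemma is essentially a specialization of \eqref{kextremaleq} followed by bookkeeping. The only mild care needed is to verify that $\square^{n+1}$ of a constant is zero (immediate from its definition) and to handle the possibly degenerate factor $\rho^{2k-2}$ when $k>1$ and $\rho=0$, which the umbilical check above addresses.
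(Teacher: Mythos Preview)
Your argument is correct and is exactly the approach the paper has in mind: the paper merely notes that for an isoparametric hypersurface all $\lambda_i$, hence $\rho^2$ and $H$, are constant and then asserts the lemma without further detail. Your write-up supplies precisely the missing bookkeeping (vanishing of the derivative terms in \eqref{kextremaleq}, expansion of $\rho^2H$, and the umbilical check when $\rho^2=0$).
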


The classification of isoparametric hypersurfaces in a sphere is an important problem in geometry. It has a long history, and recently was solved completely after a series of work, see a survey article \cite{Chi18} and the references therein.
We briefly summarize some key properties of the isoparametric theory in a sphere.
\begin{lem}[\cites{Car38, Abr83, Muen80, Muen81,Sto99}]\label{lem-iso-class}
	Let $M$ be an $n$-dimension closed isoparametric hypersurface in $\mathbb{S}^{n+1}$.
	Let $\lambda_1>\dots>\lambda_g$ be the distinct principal curvatures with multiplicities $m_1,\dots,m_g$. Then we have
	\begin{enumerate}
		\item $g=1,2,3,4$ or $6$.
		\item If $g=1$, then $M$ is totally umbilical.
		\item If $g=2$, then $M=\mathbb{S}^m(a)\times \mathbb{S}^{n-m}(\sqrt{1-a^2})$.
		\item If $g=3$, then $m_1=m_2=m_3=2^l (l=0,1,2,3)$.
		\item If $g=4$, then $m_1=m_3, m_2=m_4$.
				Moreover, $(m_1,m_2)=(2,2)$ or $(4,5)$, or $m_1+m_2+1$ is a multiple of $2^{\xi(m_1-1)}$, where $\xi(l)$ denotes the numbers of integers $s$ such that $1\le s\le l$ and $s\equiv 0,1,2,4\ (\mathrm{mod\ } 8)$.
		\item If $g=6$, then $m_1=\dots=m_6=1$ or $2$.
		\item There exists an angle $\theta\in (0,\pi/g)$ such that
		\begin{equation}
			\lambda_{\alpha}=\cot(\theta+\frac{\alpha-1}{g}\pi), \quad \alpha=1,\dots,g.
		\end{equation}
	\end{enumerate}
\end{lem}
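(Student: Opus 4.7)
The plan is to organise everything around Münzner's Cartan--Münzner polynomial. One extends the isoparametric function $F:\mathbb{S}^{n+1}\to[-1,1]$ to a degree-$g$ homogeneous polynomial on $\mathbb{R}^{n+2}$ satisfying $|\nabla F|^{2}=g^{2}|x|^{2g-2}$ and $\Delta F = \tfrac{1}{2}g^{2}(m_{2}-m_{1})|x|^{g-2}$, so that its unit-sphere level sets recover the isoparametric foliation and the two focal sets $M_{\pm}=F^{-1}(\pm 1)\cap \mathbb{S}^{n+1}$ are smooth submanifolds of codimensions $m_{1}+1$ and $m_{2}+1$ respectively. All seven conclusions will be read off from properties of this polynomial.

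First I would establish part (7) together with the multiplicity periodicity $m_{i+2}=m_{i}$ (which yields $m_{1}=m_{3}$, $m_{2}=m_{4}$ in part (5) and the full equality in parts (4) and (6)). The key is Cartan's identity: on each non-focal parallel hypersurface the Gauss and Codazzi equations combined with constancy of the principal curvatures yield $\sum_{i\ne j}m_{i}(1+\lambda_{i}\lambda_{j})/(\lambda_{j}-\lambda_{i})=0$ for each $j$. Moving along the normal geodesic replaces $\lambda_{\alpha}$ by $\cot(\theta_{\alpha}+t)$, and Cartan's identity forces the angles $\theta_{\alpha}$ to be equispaced on the circle with spacing $\pi/g$, which is exactly (7), and simultaneously forces the multiplicity sequence to be periodic of period~$2$.

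For part (1), I would follow Münzner's topological argument. The sphere $\mathbb{S}^{n+1}$ decomposes as a union of two tube neighbourhoods of $M_{\pm}$ meeting along a non-focal level $M$. Applying Mayer--Vietoris with the fact that $M$ fibres over each $M_{\pm}$ with spherical fibres, one computes $H^{*}(M;\mathbb{Z}_{2})$ and finds its total dimension equals $2g$. Combining this with Poincar\'e duality and the constraint that $M$ embed as the common boundary of two disk bundles in $\mathbb{S}^{n+1}$ produces an arithmetic identity that forces $g\in\{1,2,3,4,6\}$. Parts (2) and (3) are the easy cases: $g=1$ gives a totally umbilical sphere, and $g=2$ is Cartan's observation that $F$ factorises as a quadratic form, so $M$ is a standard product of two spheres with the stated radii.

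The main obstacle, and the last step of the programme, is the multiplicity arithmetic in parts (4)--(6). Here one needs deeper input. For $g=3$, Cartan's identification of $M_{+}$ with a projective plane over $\mathbb{R},\mathbb{C},\mathbb{H}$ or $\mathbb{O}$ forces $m_{1}\in\{1,2,4,8\}$. For $g=6$, Abresch exploits extra Codazzi-type identities on the focal submanifolds to force $m_{1}\in\{1,2\}$. For $g=4$ the deepest case is Stolz's theorem, whose proof requires Adams's solution of the vector fields on spheres problem applied to an obstruction bundle over the focal variety, producing the divisibility condition on $m_{1}+m_{2}+1$ with the exceptional pairs $(2,2)$ and $(4,5)$ left over. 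I expect this last K-theoretic step to be by far the hardest part of the programme and would treat it as a black box, since no elementary replacement is known.
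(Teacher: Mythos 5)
The paper does not prove this lemma at all: it is stated as a compendium of classical results and simply cited to Cartan, M\"unzner, Abresch and Stolz, so there is no in-paper argument to compare yours against. Your outline is an accurate roadmap of how those references actually establish each item --- the Cartan--M\"unzner polynomial and Cartan's identity for part (7) and the period-two multiplicities, M\"unzner's Mayer--Vietoris computation of $\dim H^{*}(M;\mathbb{Z}_{2})=2g$ for the restriction $g\in\{1,2,3,4,6\}$, Cartan's projective-plane focal manifolds for $g=3$, and Abresch/Stolz for $g=6$ and $g=4$ --- and you correctly flag that the $g=4$ multiplicity theorem rests on homotopy-theoretic input that cannot be reproduced elementarily. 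Be aware, though, that what you have written is a survey of the literature rather than a proof: the three genuinely hard steps (M\"unzner's cohomological argument, Abresch's $g=6$ theorem, Stolz's $g=4$ theorem) are only named, exactly as the paper itself leaves them to the cited sources, so for the purposes of this paper your treatment and the authors' are equivalent.
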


Clearly, the totally hypersurfaces ($g=1$) are always $k$-extremal. Now we discuss $g=2,3,4$ and $6$, respectively.

\begin{defn}\label{def-k-tori}
	Given $k\ge 1$, we denote an $n$-dimensional torus $T_{m,k}$ as follows.
	\begin{enumerate}
		\item When either $k>n/4$ and there exists $m (1\le m\le n-1)$ such that $n-2k<m<2k$, or $k<n/4$ and there exists $m (1\le m\le n-1)$ such that $2k<m<n-2k$, define
		\begin{equation*}
			T_{m,k}=\mathbb{S}^m(\sqrt{\frac{m-2k}{n-4k}})\times \mathbb{S}^{n-m}(\sqrt{\frac{n-m-2k}{n-4k}});
		\end{equation*}

		\item When $k=n/4$, and $n=2m$ is even, define
		\begin{equation*}
			T_{m,k}=\mathbb{S}^m({\frac{1}{\sqrt{2}}})\times \mathbb{S}^m({\frac{1}{\sqrt{2}}}).
		\end{equation*}
	\end{enumerate}
	We call $T_{m,k}$ the $k$-extremal torus.
\end{defn}

\begin{thm}\label{thm-iso-2}
	Let $M^n$ be a closed isoparametric hypersurface in $\mathbb{S}^{n+1}$ with $g=2$. Given $k\ge 1$.
	Then $M$ is $k$-extremal if and only if $M$ is one of the $k$-extremal tori $T_{m,k}$.
\end{thm}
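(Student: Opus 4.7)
The strategy is to specialise the general $k$-extremal equation \eqref{eq-isoparametric} to the $g=2$ setting, where all curvature information is encoded in the two distinct principal curvatures $\lambda_1,\lambda_2$ of multiplicities $m$ and $n-m$. By Lemma~\ref{lem-iso-class}(7) one has $\lambda_1=\cot\theta$ and $\lambda_2=\cot(\theta+\pi/2)=-\tan\theta$, so in particular $\lambda_1\lambda_2=-1$. Writing $nH=m\lambda_1+(n-m)\lambda_2$ and $S=m\lambda_1^2+(n-m)\lambda_2^2$, a direct computation gives $\rho^2=\tfrac{m(n-m)}{n}(\lambda_1-\lambda_2)^2>0$ because $\lambda_1\neq\lambda_2$.

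The first key step is the algebraic identity
\begin{equation*}
	\sum_{i=1}^{n}\lambda_i^3 \;=\; HS+\rho^2(\lambda_1+\lambda_2),
\end{equation*}
which I would prove by expanding $m\lambda_1^3+(n-m)\lambda_2^3$, subtracting $nHS=(m\lambda_1+(n-m)\lambda_2)(m\lambda_1^2+(n-m)\lambda_2^2)$, and applying the factorisation $x^3+y^3-xy(x+y)=(x+y)(x-y)^2$ to bring in $\rho^2$. Inserting this identity into \eqref{eq-isoparametric} and using $nH^2-S=-\rho^2$ makes all cubic terms collapse, leaving
\begin{equation*}
	\rho^2\!\left[(\lambda_1+\lambda_2)-\tfrac{n}{2k}H\right]=0.
\end{equation*}
Since $\rho^2>0$, the bracket must vanish; unpacking $H=(m\lambda_1+(n-m)\lambda_2)/n$ turns this into the linear relation
\begin{equation*}
	(m-2k)\lambda_1+(n-m-2k)\lambda_2=0.
\end{equation*}

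Combining this with $\lambda_1\lambda_2=-1$ gives, in the generic case $m\neq 2k$, $\lambda_1^2=(n-m-2k)/(m-2k)$ and hence $a^2=1/(1+\lambda_1^2)=(m-2k)/(n-4k)$; the requirement $0<a^2<1$ forces the sign conditions $n-2k<m<2k$ when $k>n/4$ and $2k<m<n-2k$ when $k<n/4$, reproducing exactly Definition~\ref{def-k-tori}(1). The borderline case $m=2k$ forces $n-m-2k=0$ as well, i.e.\ $n=4k$ and $m=n/2$, which matches Definition~\ref{def-k-tori}(2). The converse direction amounts to plugging these values of $a$ back into the linear relation, which is immediate.

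The main obstacle I anticipate is not analytic but combinatorial, at this final stage: carefully tracking the simultaneous signs of $m-2k$ and $n-4k$ to recover precisely the two parameter ranges listed in Definition~\ref{def-k-tori}(1), and ensuring that the borderline case $k=n/4$ is pinned down to the claimed members of Definition~\ref{def-k-tori}(2) without admitting any spurious torus outside the list.
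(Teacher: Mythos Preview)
Your proposal is correct and arrives at exactly the same end result as the paper, but through a cleaner algebraic route. The paper substitutes the two principal curvatures directly into \eqref{eq-isoparametric}, obtains a degree-six polynomial in $\lambda$, and then factors it as $\big((m-2k)\lambda^2-(n-m-2k)\big)(\lambda^2+1)^2=0$. You instead first establish the identity $\sum_i\lambda_i^3=HS+\rho^2(\lambda_1+\lambda_2)$, which collapses \eqref{eq-isoparametric} immediately to the linear relation $(m-2k)\lambda_1+(n-m-2k)\lambda_2=0$ after dividing by $\rho^2>0$. Combining this linear relation with $\lambda_1\lambda_2=-1$ is then equivalent to the paper's quadratic factor, and the subsequent case analysis (the sign discussion for $a^2=(m-2k)/(n-4k)$ and the degenerate case $m=2k\Rightarrow k=n/4,\ m=n/2$) matches the paper line for line. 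Your approach avoids the sextic altogether and makes transparent why the factor $(\lambda^2+1)^2$ in the paper's computation is spurious; the paper's approach, on the other hand, requires no preliminary identity and is purely mechanical. The ``combinatorial obstacle'' you flag is handled correctly in your sketch and is identical to the paper's treatment.
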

\begin{proof}
	Since $M=\mathbb{S}^{m}(a)\times \mathbb{S}^{n-m}(\sqrt{1-a^2})$,
	by a choice of unit normal, the principal curvatures are given by $\lambda=-\frac{\sqrt{1-a^2}}{a}$ with multiplicity $m$ and $\mu=\frac{a}{\sqrt{1-a^2}}$ with multiplicity $n-m$. \eqref{eq-isoparametric} becomes
	\begin{equation}
		\Big(m\lambda^3+(n-m)\mu^3\Big)+\frac{1}{2kn}\Big(m\lambda+(n-m)\mu\Big)^3-\frac{n+2k}{2kn}\Big(m\lambda+(n-m)\mu\Big)\Big(m\lambda^2+(n-m)\mu^2\Big)=0.
	\end{equation}
	By using $\lambda\mu=-1$, we obtain
	\begin{equation}
		(m-2k)\lambda^6+(3m-n-2k)\lambda^4+(3m+2k-2n)\lambda^2+(m+2k-n)=0,
	\end{equation}
	equivalently,
	\begin{equation}
		\Big((m-2k)\lambda^2-(n-m-2k)\Big)(\lambda^2+1)^2=0.
	\end{equation}
	If $k=n/4$, then $(m-n/2)(\lambda^2+1)=0$, and we must have $m=n/2=2k$. So $M=C_{m,m}=\mathbb{S}^m({\frac{1}{\sqrt{2}}})\times \mathbb{S}^m({\frac{1}{\sqrt{2}}})$ with $k=m/2=n/4$.

	If $k=m/2$, then we must have $n=m+2k=2m$, i.e., $k=n/4$.
	Therefore, if $k\neq n/4$, then $k\neq m/2$.
	We solve out $\lambda^2=\frac{1-a^2}{a^2}=\frac{n-m-2k}{m-2k}$ and $a^2=\frac{m-2k}{n-4k}$.
	Since $0<a^2<1$, if $k<n/4$, then $m>2k$ and $n-m>2k$, i.e., $2k<m<n-2k$; if $k>n/4$, then $m<2k$ and $n-m<2k$, i.e., $n-2k<m<2k$.
\end{proof}

\begin{rem}\label{rem-k-tori}
	For the existence of $T_{m,k}$, we have the following observation.
	\begin{enumerate}
		\item When $k>(n-1)/2$, $T_{m,k}$ is defined for each $m=1,\dots,n-1$.
		In particular, $T_{m,n/2}=W_{m,n-m}$.

		\item When $k\le (n-1)/2$, $m$ cannot run over $\{1,...,n-1\}$, even doesn't exist for some pair $(n,k)$.

		\item When $n=2m$ is even, $T_{m,k}=C_{m,m}$ is always defined for any $k\ge 1$, and it is the only minimal $k$-extremal torus.
	\end{enumerate}

	A direct computation shows that for each existing $T_{m,k}$, we have
	\begin{equation}
		\rho^2=n+n(\frac{n}{2k}-1)H^2.
	\end{equation}
\end{rem}

\begin{thm}\label{thm-iso-3}
	Let $M^n$ be an isoparametric hypersurface in $\mathbb{S}^{n+1}$ with $g=3$. Given $k\ge 1$.
	\begin{enumerate}
		\item If $n=3$ or $k\neq n/6 (n=6,12,24)$, then $M$ is $k$-extremal if and only if
		\begin{equation*}
			\lambda_1=\sqrt{3},\quad \lambda_2=0, \quad \lambda_3=-\sqrt{3}.
		\end{equation*}
		\item If $k=n/6 (n=6,12,24)$, then $M$ is always $k$-extremal.
	\end{enumerate}
\end{thm}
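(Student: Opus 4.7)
The plan is to reduce the $k$-extremal condition \eqref{eq-isoparametric} for a $g=3$ isoparametric hypersurface to a single polynomial identity in the variable $c := \cot(3\theta)$, and then read off the two cases. By Lemma \ref{lem-iso-class}(4),(7), the three distinct principal curvatures share a common multiplicity $m = n/3 \in \{1,2,4,8\}$ and are given by $\lambda_\alpha = \cot(\theta + (\alpha-1)\pi/3)$. The triple-angle identity $\cot(3\omega) = (\cot^3\omega - 3\cot\omega)/(3\cot^2\omega - 1)$ shows that $\lambda_1,\lambda_2,\lambda_3$ are precisely the three roots of $x^3 - 3cx^2 - 3x + c = 0$, so by Vieta together with a Newton identity the power sums $p_j := \lambda_1^j+\lambda_2^j+\lambda_3^j$ are
\begin{equation*}
	p_1 = 3c,\qquad p_2 = 9c^2 + 6,\qquad p_3 = 27c^3 + 24c.
\end{equation*}

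Since each $\lambda_\alpha$ occurs with multiplicity $m$, one has $\sum_i \lambda_i^j = m p_j$, and substituting into \eqref{eq-isoparametric} together with $n = 3m$ reduces it, after multiplying through by $6k/m$, to
\begin{equation*}
	6k\, p_3 + m\, p_1^3 - (3m + 2k)\, p_1 p_2 = 0.
\end{equation*}
Plugging in the expressions for $p_1,p_2,p_3$ and collecting terms, the coefficients of $c^3$ and of $c$ both simplify to $54(2k-m)$, so the equation factors cleanly as
\begin{equation*}
	54\,(2k - m)\, c\, (c^2+1) = 0.
\end{equation*}
Since $c^2+1 > 0$, either $c = 0$ or $m = 2k$.

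For the first alternative, $c = \cot(3\theta) = 0$ forces $\theta = \pi/6$, and direct evaluation of the three cotangents yields $\lambda_1 = \sqrt{3},\ \lambda_2 = 0,\ \lambda_3 = -\sqrt{3}$, giving case (i). For the second alternative, $m = 2k$ together with $m \in \{1,2,4,8\}$ and $k \ge 1$ forces $(n,m,k) \in \{(6,2,1),(12,4,2),(24,8,4)\}$, that is, precisely $k = n/6$ with $n\in\{6,12,24\}$; in this case the polynomial vanishes identically in $\theta$, so \emph{every} such $M$ is $k$-extremal, giving case (ii). When $n = 3$ we have $m = 1$, forcing $k = 1/2 < 1$, so the second alternative is vacuous and only $c = 0$ can occur, again consistent with the statement.

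The only delicate point is book-keeping: the multiplicity factor $m$ must be carried correctly through \eqref{eq-isoparametric} so that both the $c$- and $c^3$-coefficients collapse to the same multiple of $(2k - m)$, producing the clean factorization above. Once the cubic with roots $\lambda_1,\lambda_2,\lambda_3$ is identified, every subsequent step is a mechanical symbolic computation.
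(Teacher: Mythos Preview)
Your proof is correct and follows the same overall strategy as the paper's: substitute the $g=3$ principal curvatures into \eqref{eq-isoparametric} and factor. The paper parameterizes by $\lambda_1=\cot\theta$, expresses $\lambda_2,\lambda_3$ as rational functions of $\lambda_1$, and after brute-force algebra obtains $(m-2k)(\lambda_1^2-3)(\lambda_1^2+1)^3=0$. Your choice of $c=\cot(3\theta)$ is a genuinely nicer parameterization: recognizing $\lambda_1,\lambda_2,\lambda_3$ as the roots of $x^3-3cx^2-3x+c=0$ lets you read off the power sums via Vieta and Newton, and the factorization $54(2k-m)c(c^2+1)=0$ drops out with almost no work. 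The two factorizations are equivalent, since $c=\lambda_1(\lambda_1^2-3)/(3\lambda_1^2-1)$ and $c^2+1=(\lambda_1^2+1)^3/(3\lambda_1^2-1)^2$. Your route trades a heavier symbolic computation for a small conceptual observation, which is a clean improvement in exposition; the paper's version, by contrast, requires no auxiliary identity but hides the algebra behind ``we obtain.''
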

\begin{proof}
	Let $m:=m_1=m_2=m_3$,
	\begin{equation}\label{eq-principal-curvatures-3}
		\lambda_1=\cot\theta, \quad\lambda_2=\cot(\theta+\frac{\pi}{3})=\frac{\lambda_1-\sqrt{3}}{1+\sqrt{3} \lambda_1}, \quad\lambda_3=\cot(\theta+\frac{2\pi}{3})=\frac{\lambda_1+\sqrt{3}}{1-\sqrt{3} \lambda_1}.
	\end{equation}
	Putting \eqref{eq-principal-curvatures-3} into \eqref{eq-isoparametric} and noticing $n=3m,\lambda_1>1/\sqrt{3}$, we obtain
	\begin{equation}
		(m-2k)(\lambda_1^2-3)(\lambda_1^2+1)^3=0.
	\end{equation}
	When $k\neq m/2$, we have $\lambda_1=\sqrt{3}, \lambda_2=0, \lambda_3=-\sqrt{3}$. This case is minimal.

	When $k=m/2$, there are no extra restrictions on $\lambda_1$. Since $k\ge 1$, this case occurs only when $n=6k=6,n=6k=12$ or $n=6k=24$.
\end{proof}

\begin{thm}\label{thm-iso-6}
	Let $M^n$ be an isoparametric hypersurface in $\mathbb{S}^{n+1}$ with $g=6$. Given $k\ge 1$.
	\begin{enumerate}
		\item If $n=6$, or $n=12$ and $k>1$, then $M$ is $k$-extremal  if and only if
		\begin{equation*}
			\lambda_1=2+\sqrt{3}, \quad \lambda_2=1, \quad
			\lambda_3=2-\sqrt{3}, \quad \lambda_4=-(2-\sqrt{3}),\quad
			\lambda_5=-1, \quad \lambda_6=-(2+\sqrt{3}).
		\end{equation*}
		\item If $n=12$ and $k=1$, then $M$ is always  $k$-extremal.
	\end{enumerate}
\end{thm}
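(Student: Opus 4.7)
The plan is to mirror the argument for Theorem \ref{thm-iso-3}: parameterize all six principal curvatures by a single trigonometric quantity, substitute into the $k$-extremal equation \eqref{eq-isoparametric}, factor the resulting identity, and read off the two cases. By Lemma \ref{lem-iso-class}(6), when $g=6$ all multiplicities are equal with $m\in\{1,2\}$, so $n=6m\in\{6,12\}$, and each $\lambda_\alpha=\cot\bigl(\theta+\tfrac{\alpha-1}{6}\pi\bigr)$ has multiplicity $m$.

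The key reduction is to express the power sums $P_j:=\sum_{\alpha=1}^{6}\lambda_\alpha^{j}$ (for $j=1,2,3$) as polynomials in $c:=\cot(6\theta)$. The cleanest route is to observe that $\frac{\lambda_\alpha+i}{\lambda_\alpha-i}=e^{2i(\theta+(\alpha-1)\pi/6)}$ runs over all sixth roots of $w:=e^{12i\theta}$, so the six $\lambda_\alpha$ are exactly the roots of $(\lambda+i)^6-w(\lambda-i)^6=0$. Expanding and using $\tfrac{i(1+w)}{1-w}=-\cot(6\theta)=-c$ yields the monic polynomial
\begin{equation*}
\lambda^6-6c\lambda^5-15\lambda^4+20c\lambda^3+15\lambda^2-6c\lambda-1=0.
\end{equation*}
Newton's identities then give $P_1=6c$, $P_2=36c^2+30$, $P_3=216c^3+210c$.

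With $n=6m$, $S=mP_2$, $nH=mP_1$ (so $H=c$), and $\sum_i\lambda_i^3=mP_3$, substituting into \eqref{eq-isoparametric} and clearing the common factor of $m$ collapses the equation to
\begin{equation*}
\tfrac{90}{k}(2k-m)\,c\,(c^2+1)=0.
\end{equation*}
Since $c^2+1>0$, either $c=\cot(6\theta)=0$ or $k=m/2$. In the first case $\theta\in(0,\pi/6)$ forces $\theta=\pi/12$, which produces exactly the principal curvatures listed in (1). In the second case, no condition on $\theta$ remains, but $k\geq 1$ rules out $m=1$ and forces $m=2$, $k=1$, $n=12$, giving (2). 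Conversely, in each case one checks directly that \eqref{eq-isoparametric} is satisfied.

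The only real obstacle is the symmetric-function computation producing the cubic power sum $P_3$; all subsequent steps are algebraic simplification. An alternative, more elementary route is to mimic the $g=3$ computation by writing $\lambda_2,\ldots,\lambda_6$ as rational functions of $\lambda_1=\cot\theta$ via the cotangent addition formula (in particular $\lambda_{\alpha+3}=-1/\lambda_\alpha$), but the complex-polynomial approach keeps the bookkeeping lighter and makes the factorization $(2k-m)c(c^2+1)$ transparent.
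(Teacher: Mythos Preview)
Your proposal is correct and takes a genuinely different route from the paper. The paper writes $\lambda_2,\dots,\lambda_6$ as explicit rational functions of $\lambda_1=\cot\theta$ via the cotangent addition formula, substitutes directly into \eqref{eq-isoparametric}, and factors the resulting degree-$14$ expression in $\lambda_1$ as
\[
(m-2k)(\lambda_1^2-1)(1-4\lambda_1+\lambda_1^2)(1+4\lambda_1+\lambda_1^2)(1+\lambda_1^2)^6=0,
\]
then uses the range restriction $\lambda_1>\sqrt{3}$ to isolate $\lambda_1=2+\sqrt{3}$. Your complex-polynomial trick, packaging the six curvatures as the roots of $(\lambda+i)^6=w(\lambda-i)^6$ and working with the single variable $c=\cot(6\theta)$, is considerably cleaner: Newton's identities collapse \eqref{eq-isoparametric} to $\tfrac{90}{k}(2k-m)\,c\,(c^2+1)=0$, whose only real root $c=0$ immediately gives $\theta=\pi/12$. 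The two factorizations are of course the same identity---the paper's three quadratic factors in $\lambda_1$ are exactly the numerator of $\cot(6\theta)$ expressed through $\cot\theta$---but your parametrization makes the structure transparent and sidesteps the heavy polynomial manipulation. Both arguments then handle the dichotomy $k=m/2$ versus $k\neq m/2$ identically.
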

\begin{proof}
	Let $m:=m_1=m_2=m_3=m_4=m_5=m_6\in {1,2}$,
	\begin{equation}\label{eq-principal-curvatures-6}
		\begin{gathered}
			\lambda_1=\cot\theta,\quad
			\lambda_2=\frac{\sqrt{3}\lambda_1-1}{\sqrt{3}+\lambda_1},\quad
			\lambda_3=\frac{\lambda_1-\sqrt{3}}{1+\sqrt{3} \lambda_1}, \\
			\lambda_4=-\frac{1}{\lambda_1},\quad
			\lambda_5=\frac{\lambda_1+\sqrt{3}}{1-\sqrt{3} \lambda_1},\quad
			\lambda_6=\frac{\sqrt{3} \lambda_1+1}{\sqrt{3}-\lambda_1}.
		\end{gathered}
	\end{equation}
	Putting \eqref{eq-principal-curvatures-6} into \eqref{eq-isoparametric} and noticing $n=6m,\lambda_1>\sqrt{3}$, we obtain
	\begin{equation}
		(m-2k)(\lambda_1^2-1)(1-4\lambda_1+\lambda_1^2)(1+4\lambda_1+\lambda_1^2)(1+\lambda_1^2)^6=0.
	\end{equation}

	When $k\neq m/2$, we have
	\begin{equation*}
		\lambda_1=2+\sqrt{3}, \quad \lambda_2=1, \quad \lambda_3=2-\sqrt{3}, \quad \lambda_4=-(2-\sqrt{3}),\quad
		\lambda_5=-1, \quad \lambda_6=-(2+\sqrt{3}).
	\end{equation*}
	This case is minimal.

	When $k=m/2$, there are no extra restrictions on $\lambda_1$. Since $k\ge 1$, this case occurs only when $n=12k=12$.
\end{proof}

\begin{thm}\label{thm-iso-4}
	Let $M^n$ be an isoparametric hypersurface in $\mathbb{S}^{n+1}$ with $g=4$. Given $k\ge 1$.
	% Assume that $M$ is a $k$-extremal hypersurface for $k\ge 1$.
	Then
	$M$ is a $k$-extremal hypersurface
	if and only if the equation
	\begin{equation*}
		(2k-m_1)m_1(m_1+2m_2)x^2+4m_1m_2(m_2-m_1)x-16(2k-m_2)m_2(2m_1+m_2)=0
	\end{equation*}
	has a positive solution $x=A^2 (A>0)$, and the principal curvatures are given by
	\begin{equation}
		\lambda_1=\lambda,\quad
		\lambda_2=\frac{\lambda-1}{\lambda+1}, \quad
		\lambda_3=-\frac{1}{\lambda},\quad
		\lambda_4=-\frac{\lambda+1}{\lambda-1},
\end{equation}
where $\lambda>1$ satisfies $\lambda-1/\lambda=A>0$.
Moreover,
	\begin{enumerate}
		\item When $m_1=m_2=n/4$, if $k=n/8$, then $M$ is always $k$-extremal;
		if $k\ne n/8$, then
		\begin{equation}\label{eq-4-equ}
			\lambda_1=\sqrt{2}+1, \quad \lambda_2=\sqrt{2}-1, \quad \lambda_3=1-\sqrt{2}, \quad \lambda_4=-(1+\sqrt{2}).
		\end{equation}
		\item When $m_1<m_2$, if $m_1\le k\le m_2$, then $M$ is not $k$-extremal;
		if $k<m_1$ or $k>m_2$, then \eqref{eq-4-equ} has exactly a positive solution.
	\end{enumerate}
\end{thm}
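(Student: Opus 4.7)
I would begin by computing the principal curvatures explicitly from Lemma \ref{lem-iso-class}(7). Setting $\theta\in(0,\pi/4)$ and $\lambda:=\cot\theta>1$, the cotangent addition formula yields $\lambda_\alpha=\cot(\theta+(\alpha-1)\pi/4)$ in the four closed forms claimed in the theorem. The essential algebraic feature is that the principal curvatures split into two conjugate pairs with $\lambda_1\lambda_3=\lambda_2\lambda_4=-1$. Introducing $A:=\lambda_1+\lambda_3=\lambda-\lambda^{-1}>0$, one immediately obtains $\lambda_2+\lambda_4=-4/A$, and the Newton identities give $\lambda_1^2+\lambda_3^2=A^2+2$, $\lambda_1^3+\lambda_3^3=A(A^2+3)$, together with the corresponding formulas for $\lambda_2,\lambda_4$ obtained by replacing $A$ with $-4/A$.

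The next step is to substitute these expressions into the scalar $k$-extremal equation \eqref{eq-isoparametric} and reduce it to a single polynomial identity in $A$. Because the ingredients read $nH=m_1A-4m_2/A$, $S=m_1(A^2+2)+m_2(16/A^2+2)$, and $\sum_i\lambda_i^3=m_1A(A^2+3)-(4m_2/A)(16/A^2+3)$, after multiplying \eqref{eq-isoparametric} by $A^3$ to clear denominators the left-hand side becomes a cubic polynomial $P(y)$ in $y:=A^2$. A direct check establishes $P(-4)=0$, so performing polynomial division yields
\begin{equation*}
P(y)=(y+4)\bigl[(2k-m_1)m_1(m_1+2m_2)y^2+4m_1m_2(m_2-m_1)y-16(2k-m_2)m_2(2m_1+m_2)\bigr].
\end{equation*}
Since $y=A^2>0$, the root $y=-4$ is extraneous, and $M$ is $k$-extremal if and only if the bracketed quadratic admits a positive root $y=A^2$ (with $\lambda$ then recovered from $\lambda-1/\lambda=\sqrt{y}$). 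The first half of the theorem then follows.

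For the case-by-case statements, I would split on whether $m_1=m_2$ or $m_1<m_2$. In case (1), $m_1=m_2$, the linear term of the quadratic vanishes and the equation collapses to $3m_1^2(2k-m_1)(y^2-16)=0$; this is satisfied identically when $k=m_1/2=n/8$ (so every $\theta$ produces a $k$-extremal hypersurface), and otherwise forces $y=4$, i.e.\ $A=2$ and $\lambda=1+\sqrt{2}$, recovering \eqref{eq-4-equ}. In case (2), $m_1<m_2$, the linear coefficient $4m_1m_2(m_2-m_1)$ is strictly positive while the leading coefficient and constant term have signs $\operatorname{sgn}(2k-m_1)$ and $\operatorname{sgn}(m_2-2k)$ respectively. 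A short Vieta-type sign discussion shows that a positive root exists (and is then unique) precisely when these two signs are opposite, i.e.\ outside the stated middle range where $M$ fails to be $k$-extremal; in the complementary middle range Vieta forces both roots of the quadratic to be non-positive.

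The main obstacle is purely computational: carrying out the algebraic expansion of \eqref{eq-isoparametric} to a polynomial in $y$ and spotting the extraneous factor $(y+4)$ that reduces the degree from three to two. Once this factorization is in hand, the subsequent classification amounts to an elementary sign discussion for a quadratic equation, and the concrete principal curvatures in \eqref{eq-4-equ} arise mechanically from the distinguished symmetric root $y=4$ of the vanishing-middle-coefficient case.
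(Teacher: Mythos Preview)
Your proof is correct and follows essentially the same approach as the paper: the paper introduces $A=\lambda_1-1/\lambda_1$ and $B=\lambda_2-1/\lambda_2$ with $AB=-4$, substitutes the resulting expressions for $nH$, $S$, and $\sum_i\lambda_i^3$ into \eqref{eq-isoparametric}, and arrives at the same quadratic in $A^2$ directly (your extra step of factoring out $(y+4)$ appears only because you eliminate $B$ at the outset rather than using $AB=-4$ during the simplification). Note that both your Vieta analysis and the paper's yield the no-solution range $m_1\le 2k\le m_2$ rather than the theorem's stated $m_1\le k\le m_2$, so the discrepancy you implicitly glide over is a typo in the statement itself, not a gap in your argument.
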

\begin{proof}
	We have $m_1=m_3, m_2=m_4, n=2(m_1+m_2)$,
	\begin{equation}\label{eq-principal-curvatures-4}
			\lambda_1=\cot\theta,\quad
			\lambda_2=\frac{\lambda_1-1}{\lambda_1+1}, \quad
			\lambda_3=-\frac{1}{\lambda_1},\quad
			\lambda_4=-\frac{\lambda_1+1}{\lambda_1-1}=-\frac{1}{\lambda_2}.
	\end{equation}
	Set $A=\lambda_1-\frac{1}{\lambda_1}, B=\lambda_2-\frac{1}{\lambda_2}$,
	then
	\begin{equation}\label{eq-principal-curvatures-4-relation}
		nH=m_1A+m_2B,\quad S=m_1A^2+m_2B^2+n, \quad \sum_{i}\lambda^3_i=m_1A^3+m_2B^3+3nH.
	\end{equation}

	Putting \eqref{eq-principal-curvatures-4-relation} into \eqref{eq-isoparametric} and noticing $AB=-4$, we obtain
	\begin{equation}\label{eq-principal-curvatures-4'}
		(2k-m_1)m_1(m_1+2m_2)A^4+4m_1m_2(m_2-m_1)A^2-16(2k-m_2)m_2(2m_1+m_2)=0.
	\end{equation}

	\textbf{Case 1: $m_1=m_2$}. In this case,  $m_1=m_2=m=n/4$ and \eqref{eq-principal-curvatures-4'} becomes
	\begin{equation}
		(2k-m)(A^2-4)(A^2+4)=0.
	\end{equation}
	When $k\neq m/2$, we have $\lambda_1-1/\lambda_1=2$, and then
	\begin{equation*}
		\lambda_1=\sqrt{2}+1, \quad \lambda_2=\sqrt{2}-1, \quad \lambda_3=1-\sqrt{2}, \quad \lambda_4=-(1+\sqrt{2}).
	\end{equation*}
	This case is minimal.

	When $k=m/2=n/8$, there are no extra restrictions on $A$ (so on $\lambda_1$).

	\textbf{Case 2: $m_1<m_2$}. In this case, if we denote $b=m_2/m_1$, then \eqref{eq-principal-curvatures-4'} becomes
	\begin{equation}
		\Big(\frac{2k}{m_1}-1\Big)(1+2b)A^4+4b(b-1)A^2-16\Big(\frac{2k}{m_1}-b\Big)b(2+b)=0.
	\end{equation}
	We consider the quadratic polynomial
	\begin{equation}
		P_{b,k}(x)=\Big(\frac{2k}{m_1}-1\Big)(1+2b)x^2+4b(b-1)x-16\Big(\frac{2k}{m_1}-b\Big)b(2+b).
	\end{equation}
	When $1\le 2k/m_1\le b$, $P_{b,k}$ has no positive root since $b>1$.
	When $2k/m_1<1$ or $2k/m_1>b$, $P_{b,k}$ has exactly one positive root.
\end{proof}

\section{Sectional Curvature Pinching Theorems}
In this section, we prove Theorems \ref{thm_sec} and \ref{thm_sec_int}. Firstly we show the following integral formula, which extends Proposition 3.2 of \cite{Shu07} for $k=n/2$ to general $k\ge 1$.
\begin{prop}\label{pinteq}
	For any $n$-dimensional closed $k$-extremal submanifold in $\mathbb{S}^{n+p}$, there holds the following integral equality
	\begin{align}\label{inteq}
		\begin{aligned}
			&\int_{M}\rho^{2k-2}|\nabla \tilde h|^2+(2k-2)\int_{M}\rho^{2k-2}|\nabla \rho |^2\\
			+&\int_{M}\rho^{2k-2}\sum_{\alpha ,i,j,k,l}h_{ij}^{\alpha }(h_{kl}^{\alpha }R_{lijk}
			+h_{li}^{\alpha}R_{lkjk})+\int_{M}\rho^{2k-2}\sum_{\alpha ,\beta ,i,j,k}
			h_{ij}^{\alpha}h_{ki}^{\beta }R_{\beta \alpha jk}^{\bot}\\
			-&n\int_{M}\rho^{2k-2}\sum_{\alpha ,\beta}H^{\alpha }
			\tr(\tilde{A}_{\alpha }\tilde{A}_{\beta}^2)-n\int_{M}\rho^{2k-2}
			\sum_{\alpha, \beta }\tilde {\sigma }_{\alpha \beta }H^{\alpha }H^{\beta }\\
			+&(\frac{n^2}{2k}-n )\int_{M}\rho^{2k}H^2=0.
		\end{aligned}
	\end{align}
\end{prop}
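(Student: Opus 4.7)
The strategy is a weighted Bochner--Simons computation combined with the Euler--Lagrange equation \eqref{kextremaleq}; this generalises Proposition~3.2 of \cite{Shu07} (the Willmore case $k=n/2$) to arbitrary $k\ge 1$, the new feature being the carrying of the weight $\rho^{2k-2}$.

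I would start from $\rho^{2}=\sum_{i,j,\alpha}(\tilde h_{ij}^{\alpha})^{2}$ and the Bochner identity
\begin{equation*}
\tfrac{1}{2}\Delta\rho^{2}=|\nabla\tilde h|^{2}+\sum_{i,j,\alpha}\tilde h_{ij}^{\alpha}\Delta h_{ij}^{\alpha},
\end{equation*}
where the second sum uses tracelessness of $\tilde h^{\alpha}$ to replace $\Delta\tilde h_{ij}^{\alpha}$ by $\Delta h_{ij}^{\alpha}$. Multiplying by $\rho^{2k-2}$, integrating over the closed manifold $M$, and integrating the left-hand side by parts via $\nabla\rho^{2k-2}\!\cdot\!\nabla\rho^{2}=(4k-4)\rho^{2k-2}|\nabla\rho|^{2}$ immediately produces the first two terms of \eqref{inteq}, the remaining piece being $-\int_{M}\rho^{2k-2}\sum\tilde h_{ij}^{\alpha}\Delta h_{ij}^{\alpha}$.

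Next I would expand $\Delta h_{ij}^{\alpha}$ by the submanifold Simons formula: Codazzi \eqref{eqC} applied twice and the Ricci identity \eqref{eq-Ric-id} once give
\begin{equation*}
\Delta h_{ij}^{\alpha}=nH^{\alpha}_{,ij}+\sum_{k,l}R_{lijk}h_{lk}^{\alpha}+\sum_{k,l}R_{lkjk}h_{li}^{\alpha}+\sum_{k,\beta}R^{\perp}_{\beta\alpha jk}h_{ki}^{\beta}.
\end{equation*}
I then contract against $\tilde h_{ij}^{\alpha}=h_{ij}^{\alpha}-H^{\alpha}\delta_{ij}$. The $h_{ij}^{\alpha}$-part produces exactly the two Riemann--curvature and one normal--curvature integrals appearing in \eqref{inteq}, while the $-H^{\alpha}\delta_{ij}$-part, contracted against $R_{liik}$, $R_{lkik}$ and $R^{\perp}_{\beta\alpha ik}$ and simplified through the Gauss and Ricci equations \eqref{eqG}, \eqref{eqR}, reduces to quadratic expressions in $H^{\alpha}$, $\sigma_{\alpha\beta}$ and $\tr(A_{\alpha}A_{\beta}^{2})$; these residuals are then to be combined with the Hessian-of-$H$ term using \eqref{sigma-1}--\eqref{threeitems}.

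The crux is the Hessian term $n\int_{M}\rho^{2k-2}\sum\tilde h_{ij}^{\alpha}H^{\alpha}_{,ij}$. The definition $\square^{\alpha}f=(nH^{\alpha}\delta_{ij}-h_{ij}^{\alpha})f_{,ij}$ rewrites it, summed over $\alpha$, as $n(n-1)\int_{M}\rho^{2k-2}H^{\alpha}\Delta H^{\alpha}-n\int_{M}\rho^{2k-2}\square^{\alpha}H^{\alpha}$; self-adjointness \eqref{selfadjoint} turns the second piece into $-n\int_{M}H^{\alpha}\square^{\alpha}(\rho^{2k-2})$, and I now substitute $\square^{\alpha}(\rho^{2k-2})$ from \eqref{kextremaleq}. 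The $\tr(A_{\alpha}A_{\beta}^{2})$ and $H^{\beta}\sigma_{\alpha\beta}$ contributions, rewritten via \eqref{sigma}--\eqref{threeitems}, generate exactly $-n\int\rho^{2k-2}\sum H^{\alpha}\tr(\tilde A_{\alpha}\tilde A_{\beta}^{2})$ and $-n\int\rho^{2k-2}\sum\tilde\sigma_{\alpha\beta}H^{\alpha}H^{\beta}$, while the residual $H^{2}\rho^{2}$ piece combines with the $-\tfrac{n}{2k}\rho^{2}H^{\alpha}$ contribution from \eqref{kextremaleq} to produce the coefficient $\tfrac{n^{2}}{2k}-n$ in front of $\int\rho^{2k}H^{2}$. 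The auxiliary pieces of \eqref{kextremaleq} — those involving $\Delta(\rho^{2k-2})$, $\nabla\rho^{2k-2}\!\cdot\!\nabla H^{\alpha}$, and $\Delta^{\perp}H^{\alpha}$ — cancel against $n(n-1)\int\rho^{2k-2}H^{\alpha}\Delta H^{\alpha}$ once one observes $\Delta H^{\alpha}=\Delta^{\perp}H^{\alpha}$ (normal-connection convention) and invokes Green's identity $\int\rho^{2k-2}\Delta(H^{2})=\int H^{2}\Delta(\rho^{2k-2})$.

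The principal obstacle is the bookkeeping: one must verify that the residuals from the $-H\delta$-contractions in Step~2 combined with every auxiliary piece from \eqref{kextremaleq} in Step~3 cancel identically, leaving only the three $H$-dependent integrals in \eqref{inteq}. The known special cases $k=1$ \cite{GL07} and $k=n/2$ \cite{Shu07} are useful consistency checks for the coefficients along the way.
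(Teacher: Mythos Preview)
Your approach is correct and uses the same three ingredients as the paper --- the Simons commutation formula, self-adjointness of $\square^\alpha$, and the Euler--Lagrange equation \eqref{kextremaleq} --- but is organised differently. The paper begins directly from the self-adjointness identity
\[
\int_M\rho^{2k-2}\sum_\alpha\square^\alpha(nH^\alpha)=\int_M\sum_\alpha nH^\alpha\,\square^\alpha(\rho^{2k-2}),
\]
expands the left side via the Ricci identity (obtaining $|\nabla h|^2-n^2|\nabla^\perp\mathbf H|^2-\tfrac12\Delta\rho^2+\tfrac{n(n-1)}{2}\Delta H^2$ plus the curvature terms already written with coefficient $h_{ij}^\alpha$ rather than $\tilde h_{ij}^\alpha$), substitutes \eqref{kextremaleq} on the right side, and then equates; the $\Delta H^2$, $\nabla\rho^{2k-2}\!\cdot\!\nabla H^2$ and $|\nabla^\perp\mathbf H|^2$ pieces cancel between the two sides. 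Your route via the Bochner formula for $\rho^2$ reaches the same destination, and in fact the bookkeeping you flag as the ``principal obstacle'' is easier than you anticipate: the $-H^\alpha\delta_{ij}$ residuals from contracting against the three curvature terms all \emph{vanish identically}. Indeed the two tangential-curvature residuals $\sum_\alpha H^\alpha\sum_{l,k}R_{lk}h_{lk}^\alpha$ and $-\sum_\alpha H^\alpha\sum_{l,i}R_{li}h_{il}^\alpha$ cancel each other, and the normal-curvature residual $-\sum_{\alpha,\beta}H^\alpha\sum_{i,k}R^\perp_{\beta\alpha ik}h_{ik}^\beta$ is zero by antisymmetry of $R^\perp$ in $i,k$ against the symmetry of $h_{ik}^\beta$. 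So the only genuinely nontrivial piece is the Hessian-of-$H$ term, which you handle exactly as the paper does. (Minor slip: after integrating the Bochner identity by parts, the remaining piece $\int_M\rho^{2k-2}\sum\tilde h_{ij}^\alpha\Delta h_{ij}^\alpha$ enters with a $+$ sign, not $-$.)
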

\begin{proof}
	We have by \eqref{selfadjoint}
	\begin{align}\label{adjoint}
		\int_{M}\rho^{2k-2}\sum_{\alpha }\square^{\alpha }(nH^\alpha )-\int_{M}\sum_{\alpha }(nH^\alpha )\square^\alpha \rho^{2k-2}=0.
	\end{align}
	By the definition of $\square^\alpha$, we have
	\begin{equation}\label{boxn}
		\begin{aligned}
			\sum_{\alpha }\square^{\alpha }(nH^\alpha )
			&=n^2\sum_{\alpha,i}H^\alpha H^\alpha_{ii}-\sum_{i,j,k,\alpha}h_{ij}^{\alpha} h_{kkij}^{\alpha}\\
			&=n^2\big(\frac{1}{2}\Delta H^2-|\nabla^{\perp}\mathbf H|^2\big)
			-\sum_{i,j,k,\alpha}h_{ij}^{\alpha} h_{ijkk}^{\alpha}\\
			&\quad+\sum_{\alpha ,i,j,k,l}h_{ij}^{\alpha }(h_{kl}^{\alpha }R_{lijk}+h_{li}^{\alpha }R_{lkjk})
			+\sum_{\alpha ,\beta ,i,j,k} R_{\beta \alpha jk}^{\bot}h_{ij}^{\alpha }h_{ik}^\beta\\
			&=n^2\big(\frac{1}{2}\Delta H^2-|\nabla^{\perp}\mathbf H|^2\big)
			-\frac{1}{2}\Delta S+|\nabla h|^2\\
			&\quad+\sum_{\alpha ,i,j,k,l}h_{ij}^{\alpha }(h_{kl}^{\alpha }R_{lijk}+h_{li}^{\alpha }R_{lkjk})
			+\sum_{\alpha ,\beta ,i,j,k} R_{\beta \alpha jk}^{\bot}h_{ij}^{\alpha }h_{ik}^\beta\\
			&=|\nabla h|^2-n^2|\nabla^{\perp}\mathbf H|^2-\frac{1}{2}\Delta \rho^2+\frac{n(n-1)}{2}\Delta H^2\\
			&\quad+\sum_{\alpha ,i,j,k,l}h_{ij}^{\alpha }(h_{kl}^{\alpha }R_{lijk}+h_{li}^{\alpha }R_{lkjk})
			+\sum_{\alpha ,\beta ,i,j,k} R_{\beta \alpha jk}^{\bot}h_{ij}^{\alpha }h_{ik}^\beta,
		\end{aligned}
	\end{equation}
	where we used the Ricci identity \eqref{eq-Ric-id} in the second equality.

	Multiplying \eqref{boxn} by $\rho^{2k-2}$ and taking integration over $M$, we obtain
	\begin{equation}\label{intnh}
		\begin{aligned}
			\int_{M}\rho^{2k-2}\sum_{\alpha }\square^{\alpha }(nH^\alpha )
			=&
			\int_{M}\rho^{2k-2}(|\nabla h|^2-n^2|\nabla^{\perp}\mathbf {H}|^2)+\frac{n(n-1)}{2}\int_{M}\rho^{2k-2}\Delta H^2-\frac{1}{2}\int_{M}\rho^{2k-2}\Delta \rho^2\\
			&+\int_{M}\rho^{2k-2}\sum_{\alpha ,i,j,k,l}h_{ij}^\alpha(h_{kl}^{\alpha }R_{lijk}+h_{li}^{\alpha }R_{lkjk})+\int_{M}\rho^{2k-2}\sum_{\alpha ,\beta ,i,j,k}h_{ij}^\alpha h_{ki}^\beta R_{\beta \alpha jk}^{\bot}.
		\end{aligned}
	\end{equation}

	From The Euler-Lagrangian equation \eqref{kextremaleq}, we have
	\begin{align}\label{intrhon}
		\begin{aligned}
			-\int_{M}\sum_{\alpha }nH^\alpha \square^{\alpha }\rho^{2k-2}&=-n\int_{M}\rho^{2k-2}\big[\sum_{\alpha,\beta}H^\alpha\tr(A_\alpha A_\beta^2)-
			\sum_{\alpha,\beta}H^\alpha H^{\beta }\sigma_{\alpha\beta }-\frac{n}{2k}\rho^2H^2 \big]\\
			&\quad -n\int_M (n-1)H^2\Delta (\rho^{2k-2})-2n(n-1)\int_{M}\sum_{i,\alpha}(\rho^{2k-2})_{i}H_{i}^{\alpha }H^{\alpha }\\
			&\quad -n(n-1)\int_{M}\rho^{2k-2}\sum_{i,\alpha}H^{\alpha}H^{\alpha}_{ii}\\
			&=-n\int_{M}\rho^{2k-2}\big[\sum_{\alpha,\beta}H^\alpha\tr(\tilde{A}_{\alpha}\tilde{A}_{\beta}^2)+\sum_{\alpha ,\beta }H^\alpha H^\beta \tilde \sigma_{\alpha \beta}+(1-\frac{n}{2k})\rho^2H^2\big]\\
			&\quad -n(n-1)\int_M H^2 \Delta \rho^{2k-2} -n(n-1)\int_{M}\sum_{i}(\rho^{2k-2})_{i}(H^2)_{i}\\
			&\quad -n(n-1)\int_{M}\rho^{2k-2}\big(\frac{1}{2}\Delta H^2-|\nabla^{\bot} \mathbf{H}|^2\big),
		\end{aligned}
	\end{align}
	where we used \eqref{sigma} and \eqref{threeitems} and Stokes' formula.

	Noting that
	\begin{gather*}
		|\nabla \tilde h|^2=\sum_{\alpha,i,j,k}(h_{ijk}^{\alpha}-H_{k}^\alpha \delta_{ij})^2=|\nabla h|^2-n|\nabla^{\perp}\mathbf H|^2,\\
		-\frac{1}{2}\int_{M}\rho^{2k-2}\Delta \rho^2=(2k-2)\int \rho^{2k-2}|\nabla \rho|^2, \quad -\int_M H^2\Delta\rho^{2k-2} =\int_M\sum_{i}(\rho^{2k-2})_i (H^2)_i,
	\end{gather*}
	we can complete the proof by putting \eqref{intnh} and \eqref{intrhon} into \eqref{adjoint}.
\end{proof}

\begin{proof}[Proof of Theorem \ref{thm_sec}]
	By the Ricci equation \eqref{eqR} we have
	\begin{align}\label{norm}
		\begin{aligned}
			\sum_{\alpha ,\beta ,i,j,k}h_{ij}^{\alpha }h_{ki}^{\beta }R_{\beta \alpha jk}^{\bot}
			&=\sum_{\alpha, \beta ,i,j,k,l}h_{ij}^{\alpha }h_{ki}^{\beta }
			(h_{jl}^{\beta }h_{lk}^\alpha -h_{kl}^{\beta}h_{lj}^\alpha )\\
			&=\tr(A_\alpha A_\beta)^2-\tr(A_\alpha^2 A_\beta^2)\\
			&=-\frac{1}{2}\sum_{\alpha ,\beta } N([A_{\alpha },A_{\beta }])\\
			&=-\frac{1}{2}\sum_{\alpha ,\beta } N([\tilde A_{\alpha },\tilde A_{\beta }]).
		\end{aligned}
	\end{align}

	By using of \eqref{eqG}, \eqref{sigma-1}, \eqref{sigma} and \eqref{threeitems}, a direct computation gives
	\begin{align}\label{Iitem}
		\begin{aligned}
			&\phantom{=}\sum_{\alpha ,i,j,k,l}h_{ij}^{\alpha }(h_{kl}^{\alpha }R_{lijk}+h_{li}^{\alpha }R_{lkjk}) \\
			& =n(1+H^2)\rho^2-\sum_{\alpha,\beta}\tilde \sigma_{\alpha \beta }^2
			+n\sum_{\alpha,\beta}H^{\alpha}\tr(\tilde{A}_{\alpha} \tilde{A}_{\beta}^2)
			-\frac{1}{2}\sum_{\alpha,\beta} N([\tilde A_{\alpha},\tilde A_{\beta}]).
		\end{aligned}
	\end{align}

	Now we use the trick of Yau \cite{Yau75}. Inserting \eqref{norm} and \eqref{Iitem} into \eqref{inteq}, we have
	\begin{align}\label{parameter}
		\begin{aligned}
			0&=\int_{M}\rho^{2k-2}|\nabla \tilde h|^2+(2k-2)\int_{M}\rho^{2k-2}|\nabla \rho |^2\\
			&\quad +(a+1)\int_{M}\rho^{2k-2}\sum_{\alpha ,i,j,k,l}h_{ij}^{\alpha }(h_{kl}^{\alpha }R_{lijk}
			+h_{li}^{\alpha}R_{lkjk})+\int_{M}\rho^{2k-2}\sum_{\alpha ,\beta ,i,j,k}
			h_{ij}^{\alpha}h_{ki}^{\beta }R_{\beta \alpha jk}^{\bot}\\
			&\quad -n\int_{M}\rho^{2k-2}\sum_{\alpha ,\beta}H^{\alpha }
			\tr(\tilde{A}_{\alpha }\tilde{A}_{\beta}^2)-n\int_{M}\rho^{2k-2}
			\sum_{\alpha, \beta }\tilde {\sigma }_{\alpha \beta }H^{\alpha }H^{\beta }\\
			&\quad +(\frac{n^2}{2k}-n )\int_{M}\rho^{2k}H^2-a\int_{M}\rho^{2k-2}\sum_{\alpha ,i,j,k,l}h_{ij}^{\alpha }(h_{kl}^{\alpha }R_{lijk}
			+h_{li}^{\alpha}R_{lkjk})\\
			&= \int_{M}\rho^{2k-2}|\nabla \tilde h|^2+(2k-2)\int_{M}\rho^{2k-2}|\nabla \rho |^2
			+a\int_{M}\rho^{2k-2}\sum_{\alpha ,\beta }\tilde \sigma_{\alpha \beta }^2 \\
			&\quad +(a+1)\int_{M}\sum_{\alpha ,i,j,k,l}h_{ij}^{\alpha }(h_{kl}^{\alpha }R_{lijk}+h_{li}^{\alpha }R_{lkjk})
			-\frac{1-a}{2} \int_{M}\rho^{2k-2}\sum_{\alpha ,\beta } N([\tilde A_{\alpha },\tilde A_{\beta  }])\\
			&\quad -(a+1)n\int_{M}\rho^{2k-2}\sum_{\alpha,\beta}H^\alpha\tr(\tilde{A}_{\alpha }\tilde{A}_{\beta}^2)-n\int_{M}\rho^{2k-2}\sum_{\alpha ,\beta }
			\tilde \sigma_{\alpha \beta }H^\alpha H^\beta
			\\
			&\quad +(\frac{n^2}{2k}-n)\int_{M} H^2\rho^{2k}-an\int_{M}H^2\rho^{2k}-an\int_{M}\rho^{2k},
		\end{aligned}
	\end{align}
	where $a\in [0,1]$ is a constant which will be chosen later.

	For a fixed $\alpha$, $n+1\le \alpha \le n+p,$ we can take a local orthonormal frame field $\left \{ e_{1},\cdots ,e_{n} \right \}$ such that $h_{ij}^{\alpha}=\lambda_{i}^\alpha \delta_{ij}$. Then $\tilde h_{ij}^{\alpha}=\mu_{i}^\alpha \delta_{ij}$ with $\mu_{i}^{\alpha}=\lambda_{i}^{\alpha}-H^{\alpha}$ and $\sum_{i}\mu_{i}^{\alpha}=0$.
	We have
	\begin{align*}
		\sum_{i,j,k,l}h_{ij}^{\alpha }(h_{kl}^{\alpha }R_{lijk}+h_{li}^{\alpha }R_{lkjk})
		&=\sum_{i,k}\big((\lambda_i^{\alpha})^2-(\lambda_i^{\alpha}\lambda_k^{\alpha})\big)R_{ikik}\\
		& =\frac{1}{2}\sum_{i,k} (\lambda_{i}^{\alpha}-\lambda_{k}^{\alpha})^2R_{ikik} =\frac{1}{2}\sum_{i,k} (\mu_{i}^{\alpha}-\mu_{k}^{\alpha})^2R_{ikik}\\
		&\ge \frac{1}{2}\sum_{i,k} (\mu_{i}^{\alpha}-\mu_{k}^{\alpha})^2 K_{\min}\\
		&=(\sum_{i,k} \mu_{i}^{\alpha})^2K_{\min}-(\sum_{i}\mu_{i}^{\alpha})(\sum_{k} \mu_{k}^{\alpha}) K_{\min}\\
		&=nN(\tilde{A}_{\alpha})K_{\min}.
	\end{align*}
	Hence, we have
	\begin{align}\label{ineqofsect}
		\sum_{i,j,k,l,\alpha}h_{ij}^{\alpha }(h_{kl}^{\alpha }R_{lijk}+h_{li}^{\alpha }R_{lkjk})\ge nK_{\min}\sum_{\alpha} N(\tilde{A}_{\alpha})=n\rho^2 K_{\min}.
	\end{align}
	Moreover, equality in \eqref{ineqofsect} holds if and only if $R_{ijij}=K_{\min}$ for any $i\ne j$.

	By using \eqref{ineqs} and \eqref{ineq}, we have
	\begin{align}
		\Big|\tr(\tilde{A}_{\beta}\tilde{A}_{\alpha}^2)\Big|=\Big|\sum_{i}\tilde{h}_{ii}^{\beta}(\mu_i^{\alpha})^2\Big|&\le\frac{n-2}{\sqrt{n(n-1)}}\Big(\sum_i(\mu_i^{\alpha})^2\Big)\sqrt{\sum_i(\tilde{h}_{ii}^{\beta})^2}
		\\
		&\le \frac{n-2}{\sqrt{n(n-1)} } N(\tilde{A}_\alpha)\sqrt{N(\tilde{A}_\beta)},
	\end{align}
	hence, by Cauchy-Schwarz inequality we obtain
	\begin{equation}\label{4}
		\sum_{\alpha,\beta}\Big|H^\beta\tr(\tilde{A}_{\beta}\tilde{A}_{\alpha}^2)\Big|
		\le \sum_{\alpha,\beta} \frac{n-2}{\sqrt{n(n-1)} } N(\tilde{A}_\alpha)\sqrt{N(\tilde{A}_\beta)}|H^\beta|
		\le  \frac{n-2}{\sqrt{n(n-1)} } H\rho^3.
	\end{equation}

	We choose a suitable  $e_{n+1},\cdots,e_{n+p}$ such that the $(p\times p)$ matrix $(\tilde \sigma_{\alpha\beta})$ is diagonized, i.e., $\tilde \sigma_{\alpha\beta}=\tilde\sigma_{\alpha\alpha}\delta_{\alpha\beta}.$
	A direct calculation shows that
	\begin{align}\label{H2rho2}
		\sum_{\alpha ,\beta }H^{\alpha }H^{\beta}\tilde \sigma_{\alpha \beta }
		=\sum_{\alpha }(H^\alpha )^2\tilde \sigma_{\alpha \alpha}\le
		\sum_{\alpha }(H^\alpha )^2\sum_{\beta }\tilde \sigma_{\beta \beta}
		=H^2\rho^2.
	\end{align}
	The Cauchy-Schwarz inequality gives
	\begin{equation}\label{5}
		\sum_{\alpha ,\beta }\tilde \sigma_{\alpha \beta }^2=\sum_{\alpha }\tilde \sigma_{\alpha \alpha }^2\ge \frac{1}{p}(\sum_{\alpha }\sigma_{\alpha \alpha })^2=\frac{1}{p}\rho^4.
	\end{equation}

	The DDVV inequality \eqref{DDVVineq} implies
	\begin{align}\label{3}
		\sum_{\alpha, \beta }N([\tilde A_{\alpha },\tilde A_{\beta  }])\le \sgn(p-1)\rho^4.
	\end{align}

	By using \eqref{ineqofsect}, \eqref{4}, \eqref{H2rho2}, \eqref{5} and \eqref{3}, it follows from \eqref{parameter} that
	\begin{align}\label{afterpara}
		\begin{aligned}
			0\ge &\int_{M}\rho^{2k-2}|\nabla \tilde h|^2
			+(2k-2)\int_{M}\rho^{2k-2}|\nabla \rho |^2+\frac{a}{p}\int_{M}\rho^{2k+2}\\
			&+(a+1)n\int_{M}\rho^{2k}K_{\min}-\frac{1-a}{2}\sgn(p-1)\int_{M}\rho^{2k+2}\\
			&-(a+1)\frac{n(n-2)}{\sqrt{n(n-1)}}\int_{M}\rho^{2k+1}H-(a+1)n\int_{M}\rho^{2k}H^2\\
			&+(\frac{n^2}{2k}-n )\int_{M}H^2\rho^{2k}-an\int_{M}\rho^{2k}.
		\end{aligned}
	\end{align}
	Now taking $a=\frac{p}{p+2}\sgn(p-1)$, we have
	\begin{equation}\label{pointk}
		\begin{aligned}
			0\ge &\int_{M}\rho^{2k-2}|\nabla \tilde h|^2
				+(2k-2)\int_{M}\rho^{2k-2}|\nabla \rho |^2\\
				&+(a+1)n\int_{M}\rho^{2k}\Big(K_{\min}-C_1(n,p,H,\rho,k)\Big).
		\end{aligned}
	\end{equation}

	Since $k\ge 1$, from the pinching condition \eqref{cond-sec} we obtain
	\begin{equation}\label{pointk-1}
		\begin{aligned}
			0\ge &\int_{M}\rho^{2k-2}|\nabla \tilde h|^2
				+(2k-2)\int_{M}\rho^{2k-2}|\nabla \rho |^2\\
				&+(a+1)n\int_{M}\rho^{2k}\Big(K_{\min}-C_1(n,p,H,\rho,k)\Big)\ge 0.
		\end{aligned}
	\end{equation}
	This implies $|\nabla \tilde h|^2\equiv 0$ on $M$ and all the inequalities in \eqref{4}, \eqref{H2rho2}, \eqref{3} become equalities.

	From $|\nabla \tilde h|^2=\sum_{\alpha,i,j,k}(h_{ijk}^{\alpha}-H_{k}^\alpha \delta_{ij})^2=0$ we have $h_{ijk}^{\alpha}=H_{k}^\alpha \delta_{ij}$, which implies $h_{ijk}^{\alpha}=0, H_{k}^\alpha=0$ for all $i,j,k,\alpha$, i.e., both the second fundamental form and the mean curvature of $M$ are parallel. Moreover, both $H$ and $\rho^2=S-nH^2$ are constant.

	If $\rho^2=0$, then $M$ is totally umbilical. Next, we discuss the case of $\rho^2>0$.

	\textbf{Case (i):} $p\ge 2$. Equalities \eqref{3} and \eqref{H2rho2} imply $p=2$ (cf. Lemma \ref{lem-DDVV}) and $H^2\rho^2=0$.
	Therefore, $M$ is minimal and $\sec^M\ge \frac{p}{2(p+1)}$. A rigidity theorem for minimal submanifolds due to Gu-Xu (cf. \cite[Theorem 1]{GX12}) says that  $M$ is the Veronese surface in $\mathbb{S}^4$ with $n=2$.

	\textbf{Case (ii):} $p=1$. The parameter $a=0$ and equalities in \eqref{H2rho2}, \eqref{5}, and \eqref{3} hold automatically. We conclude that $M=\mathbb{S}^{m}(a)\times \mathbb{S}^{n-m}(\sqrt{1-a^2})$ by Lawson's classification \cite{Law69}. From \eqref{pointk-1} we have
	\begin{equation}\label{eq-Kmin}
		0=K_{\min}=\frac{n-2}{\sqrt{n(n-1)}}H\rho+\big(2-\frac{n}{2k}\big)H^2.
	\end{equation}

	When $n=2$, since $k\ge 1$, we conclude $H=0$, and $M$ is the Clifford torus $C_{1,1}$ (cf. Theorem \ref{thm-iso-2}).

	When $n\ge 3$, if $H=0$, then $M=C_{m,n-m}$.
	We must have $n=2m$ and $M=C_{m,m}$ by Theorem \ref{thm-iso-2}.

	If $H\neq 0$, \eqref{4} can be proven by Lemma \ref{ineqs-hypersurface}, and the equality case implies $m=1$ without of loss of generality. From \eqref{eq-Kmin} we have $\frac{n-2}{\sqrt{n(n-1)}}\rho=(\frac{n}{2k}-2)H>0$, then $k<n/4$.  But $m=1$ doesn't satisfy $2k<m<n-2k$. Hence, this case cannot occur by Theorem \ref{thm-iso-2}.
\end{proof}

\begin{proof}[Proof of Theorem \ref{thm_sec-n}]
	We modified \eqref{afterpara} by using (cf. Lemma \ref{lem-Itoh})
	\begin{align}\label{3'}
		\sum_{\alpha, \beta }N([\tilde A_{\alpha },\tilde A_{\beta  }])\le n\sum_{\alpha, \beta }\tilde{\sigma}_{\alpha\beta}^2
	\end{align}
	instead of \eqref{5} and \eqref{3}. Then we have
	\begin{align}\label{afterpara'}
		\begin{aligned}
			0\ge &\int_{M}\rho^{2k-2}|\nabla \tilde h|^2
			+(2k-2)\int_{M}\rho^{2k-2}|\nabla \rho |^2\\
			&+(a+1)n\int_{M}\rho^{2k}K_{\min}+\Big(\frac{a}{n}-\frac{1-a}{2}\Big)\int_{M}\rho^{2k-2}\sum_{\alpha,\beta}N([\tilde{A}_\alpha,\tilde{A}_\beta])\\
			&-(a+1)\frac{n(n-2)}{\sqrt{n(n-1)}}\int_{M}\rho^{2k+1}H-(a+1)n\int_{M}\rho^{2k}H^2\\
			&+(\frac{n^2}{2k}-n )\int_{M}H^2\rho^{2k}-an\int_{M}\rho^{2k}.
		\end{aligned}
	\end{align}
	By taking $a=\frac{n}{n+2}$, we obtain
	\begin{equation}\label{pointk'}
		\begin{aligned}
			0\ge &\int_{M}\rho^{2k-2}|\nabla \tilde h|^2
				+(2k-2)\int_{M}\rho^{2k-2}|\nabla \rho |^2\\
				&+(a+1)n\int_{M}\rho^{2k}\Big(K_{\min}-C_1'(n,H,\rho,k)\Big)\ge 0.
		\end{aligned}
	\end{equation}
	The rest of the proof is similar with the proof of Theorem \ref{thm_sec}, just by using Itoh's classification \cite{Ito75} for minimal submanifolds when $p\ge 2$ and excluding the case of $p=1$.
\end{proof}

\begin{proof}[Proof of Theorem \ref{thm_sec_int}]
	We start from \eqref{pointk}.
	\begin{align}\label{intnabla}
		\begin{aligned}
			0\ge &\int_{M}\rho^{2k-2}|\nabla \tilde h|^2
				+(2k-2)\int_{M}\rho^{2k-2}|\nabla \rho|^2\\
				&+(a+1)n\int_{M}\rho^{2k}\Big(K_{\min}-C_1(n,p,H,\rho,k)\Big)
		\end{aligned}
	\end{align}
	with $a=\frac{p}{p+2}\sgn(p-1)$.

	By using \eqref{ineqrho} and letting $\epsilon\to 0$, we have
	\begin{equation}\label{eq-sec-int-part1}
		\int_{M}\rho^{2k-2}|\nabla \tilde h|^2
				+(2k-2)\int_{M}\rho^{2k-2}|\nabla \rho|^2\ge \frac{2kn-n+2}{nk^2}\left\{\mathfrak{A}(n,t)\|\rho^{2k}\|_{\frac{n}{n-2} }
				-\mathfrak{B}(n,t)\int_{M}(1+H^2) \rho^{2k}\right\}.
	\end{equation}

	Since $\delta=\Phi-C_1>0$, we have $\besec-\delta\ge C_1-K_{\min}$ by the definition of $\besec$. Hence, by using the H\"{o}lder inequality we have
	\begin{equation}\label{eq-sec-int-part2}
		\int_{M}\rho^{2k}\Big(K_{\min}-C_1(n,p,H,\rho,k)\Big)\ge \int_{M}\delta\rho^{2k}-\|\rho^{2k}\|_{\frac{n}{n-2}} \|\besec \|_{\frac{n}{2}}.
	\end{equation}
	Since $M$ is compact, there exist constants $\delta_0>0$ and $H_0\ge 0$ such that $\int_{M}\delta\rho^{2k}=\delta_0\int_{M}\rho^{2k}$ and $\int_{M}(1+H^2)\rho^{2k}=(1+H_0^2)\int_{M}\rho^{2k}$.
	Putting \eqref{eq-sec-int-part1} and \eqref{eq-sec-int-part2} into \eqref{intnabla} and choosing $t_1$ such that $\frac{2kn-n+2}{nk^2} \mathfrak{B}(n,t_1) (1+H_0^2)=(a+1)n\delta_0$, we obtain
	\begin{equation*}
		0\ge \|\rho^{2k}\|_{\frac{n}{n-2}}\Big(\frac{2kn-n+2}{nk^2}\mathfrak{A}(n,t_1)-(a+1)n\|\besec\|_{\frac{n}{2}}\Big).
	\end{equation*}
	Hence,  $M$ is totally umbilical provided $\|\besec\|_{\frac{n}{2}}<\epsilon_1$ by setting $\epsilon_1=\frac{1}{(a+1)n}\frac{2kn-n+2}{nk^2}\mathfrak{A}(n,t_1)$.
\end{proof}

The proof of Theorem \ref{thm_sec_int-n} is analogous to the proof of Theorem \ref{thm_sec_int}, so we omit it.

\section{Ricci Curvature Pinching Theorems}
In this section, we prove Theorems \ref{thm_ricci} and \ref{thm_ricci_int}.

\begin{proof}[Proof of Theorem \ref{thm_ricci}]
	Putting \eqref{norm} and \eqref{Iitem} into \eqref{inteq}, and using \eqref{H2rho2}, we have
	\begin{equation}\label{eq-Ricci}
		\begin{aligned}
			0=& \int_{M}\rho^{2k-2}|\nabla \tilde{h}|^2+(2k-2)\int_{M}\rho^{2k-2}|\nabla \rho|^2\\
			&+n\int_{M}\rho^{2k}-\int_{M}\rho^{2k-2}\sum_{\alpha,\beta}\tilde \sigma_{\alpha \beta}^2
			-\int_{M}\rho^{2k-2}\sum_{\alpha,\beta} N([\tilde A_{\alpha},\tilde A_{\beta}])\\
			&+n\int_{M}\rho^{2k}H^2-n\int_{M}\rho^{2k-2}
			\sum_{\alpha, \beta}\tilde{\sigma}_{\alpha \beta}H^{\alpha}H^{\beta}+(\frac{n^2}{2k}-n)\int_{M}\rho^{2k}H^2\\
			\ge & \int_{M}\rho^{2k-2}|\nabla \tilde{h}|^2+(2k-2)\int_{M}\rho^{2k-2}|\nabla \rho|^2+(\frac{n^2}{2k}-n)\int_{M}\rho^{2k}H^2\\
			&+\int_{M}\rho^{2k-2}\Big(n\rho^{2}-\sum_{\alpha,\beta}\tilde \sigma_{\alpha \beta}^2
			-\sum_{\alpha,\beta} N([\tilde A_{\alpha},\tilde A_{\beta}])\Big).
		\end{aligned}
	\end{equation}
	Next we use Ejiri's trick \cite{Eji79} to estimate the last term of \eqref{eq-Ricci}.

	By the Gauss equation \eqref{eqG}, we have
	\begin{equation}\label{eqG-tr-free}
		R_{ii}=(n-1)+(n-2)\sum_{\alpha}H^{\alpha}\tilde h_{ii}^{\alpha}+(n-1)H^2
		-\sum_{\alpha ,j}(\tilde h_{ij}^\alpha)^2,
	\end{equation}
	For a fixed $\alpha$, $n+1\le \alpha \le n+p,$ we can take a local orthonormal frame field $\left \{ e_{1},\cdots ,e_{n} \right \}$ such that $\tilde h_{ij}^{\alpha}=\mu_{i}^\alpha \delta_{ij}$. Then \eqref{eqG-tr-free} gives
	\begin{align}
		\sum_j\sum_{\beta \neq \alpha}(\tilde h_{ij}^{\beta})^2\le (n-1)(1+H^2)+
		(n-2)\sum_{\gamma}H^\gamma \tilde h_{ii}^\gamma -(\mu_{i}^\alpha)^2-\Ric_{\min} \mbox{ for each $i$},
	\end{align}
	and then
	\begin{align}
		\begin{aligned}
			\sum_{\beta}N([\tilde A_{\alpha},\tilde A_{\beta}])&=\sum_{\beta\neq \alpha} N([\tilde A_{\alpha},\tilde A_{\beta}])=\sum_{\beta \ne \alpha}\sum_{i,j}
			(\tilde{h}_{ij}^\beta )^2(\mu_{i}^{\alpha}-\mu_{j}^{\alpha})^2\\
			&\le 2\sum_{i,j}\sum_{\beta \neq \alpha}(\tilde{h}_{ij}^\beta)^2\Big((\mu_{i}^{\alpha})^2+(\mu_{j}^{\alpha})^2\Big)= 4\sum_{i,j}\sum_{\beta \neq \alpha}(\tilde{h}_{ij}^\beta )^2(\mu_{i}^{\alpha})^2\\
			&\le4\sum_{i}\Big((n-1)(1+H^2)+
			(n-2)\sum_{\gamma}H^\gamma \tilde h_{ii}^\gamma -(\mu_{i}^\alpha)^2-\Ric_{\min}\Big)(\mu_{i}^\alpha)^2\\
			&=4\Big((n-1)(1+H^2)-\Ric_{\min}\Big)N(\tilde{A}_\alpha)+4
			(n-2)\sum_{\gamma}H^\gamma \tr(\tilde{A}_\gamma\tilde{A}_\alpha^2)-4N(\tilde{A}_\alpha^2).
		\end{aligned}
	\end{align}
	Making summation over $\alpha$, we have
	\begin{equation}
			\sum_{\alpha ,\beta}N([\tilde A_{\alpha},\tilde A_{\beta}])\le
			4\Big((n-1)(1+H^2)-\Ric_{\min}\Big)\rho^2+
			\frac{4(n-2)^2}{\sqrt{n(n-1)} } H\rho^3-\frac{4}{n}\sum_{\alpha}\big(N(\tilde{A}_\alpha)\big)^2,
	\end{equation}
	where we used \eqref{4} and the Cauchy-Schwarz inequality.
	We denote
	\begin{equation*}
		\tilde{C}_2=(n-2)+\frac{(n-2)^2}{\sqrt{n(n-1)} } H\rho+(n-1)H^2,
	\end{equation*}
	then \eqref{scalar} gives
	\begin{equation}\label{eq-Gauss-bound}
		\Ric_{\min}\le \frac{R}{n}\le (n-1)(1+H^2)-\frac{\rho^2}{n}\le \frac{n-\rho^2}{n}+\tilde{C}_2.
	\end{equation}
	Noting \eqref{5}, we obtain
	\begin{align}\label{eq-Q}
		\begin{aligned}
			&\quad\,\, n\rho^{2}-\sum_{\alpha,\beta}\tilde \sigma_{\alpha \beta}^2
			-\sum_{\alpha,\beta} N([\tilde A_{\alpha},\tilde A_{\beta}])\\
			&\ge n\rho^2-4\Big((n-1)(1+H^2)+\frac{(n-2)^2}{\sqrt{n(n-1)} } H\rho-\Ric_{\min}\Big)\rho^2+\frac{4-n}{n}\sum_{\alpha}\big(N(\tilde{A}_\alpha)\big)^2\\
			&\ge -4\Big((n-2)+(n-1)H^2+\frac{(n-2)^2}{\sqrt{n(n-1)} } H\rho-\Ric_{\min}\Big)\rho^2+\frac{n-4}{n}\rho^2(n-\rho^2)\\
			&\ge -4(\tilde{C}_2-\Ric_{\min})\rho^2+(n-4)(\Ric_{\min}-\tilde{C}_2)\rho^2\\
			&= n\rho^2(\Ric_{\min}-\tilde{C}_2).
		\end{aligned}
	\end{align}
	Hence, putting \eqref{eq-Q} into \eqref{eq-Ricci} we obtain
	\begin{equation}\label{eq-Ricci-key}
		\begin{aligned}
			0\ge& \int_{M}\rho^{2k-2}|\nabla \tilde{h}|^2+(2k-2)\int_{M}\rho^{2k-2}|\nabla \rho|^2+\int_{M}\rho^{2k}n\Big(\Ric_{\min}-\tilde{C}_2+(\frac{n}{2k}-1)H^2\Big)\\
			=& \int_{M}\rho^{2k-2}|\nabla \tilde{h}|^2+(2k-2)\int_{M}\rho^{2k-2}|\nabla \rho|^2+n\int_{M}\rho^{2k}(\Ric_{\min}-C_2).
		\end{aligned}
	\end{equation}
	Since $k\ge 1$, if $\Ric_{\min}\ge C_2$, then
	\begin{equation}\label{eq-Ricci-start}
			0\ge \int_{M}\rho^{2k-2}|\nabla \tilde{h}|^2+(2k-2)\int_{M}\rho^{2k-2}|\nabla \rho|^2+n\int_{M}\rho^{2k}(\Ric_{\min}-C_2)\ge 0.
	\end{equation}
	As pointed in the proof of Theorem \ref{thm_sec}, we have $|\nabla h|^2=|\nabla^{\bot} \mathbf{H}|^2=0$, and both $H$ and $\rho^2$ are constant.

	If $\rho^2=0$, then $M$ is totally umbilical. Next, we discuss the case of $\rho^2>0$.
	From \eqref{eq-Ricci-start} we have
	\begin{equation}
		\Ric_{\min}= C_2.
	\end{equation}

	When $n\ge 5$, all the inequalities in \eqref{eq-Gauss-bound} become equalities, which implies $H=0$.
	Then $M$ is $C_{m,m}$ with $n=2m$ by a rigidity theorem of Ejiri for minimal submanifolds \cite{Eji79}.

	When $n=4$, since $k\ge 1$ we have
	\begin{equation}\label{eq-Ric-pmc-cond}
		\Ric^M\ge C_2\ge (n-2)+(n-\frac{n}{2k})H^2\ge (n-2)(1+H^2).
	\end{equation}
	By a rigidity result of Xu-Gu \cite[Theorem 3.3]{XG13} for submanifolds with parallel mean curvature,
	$M$ is either $\mathbb{S}^2(\frac{1}{\sqrt{2(1+H^2)}})\times \mathbb{S}^2(\frac{1}{\sqrt{2(1+H^2)}})$ in $\mathbb{S}^5(\frac{1}{\sqrt{1+H^2}})$,
	or the complex projective space $\mathbb{C}P^2(\frac{4}{3}(1+H^2))$ with constant holomorphic sectional curvature $\frac{4}{3}(1+H^2)$ in $\mathbb{S}^7(\frac{1}{\sqrt{1+H^2}})$.
	These two candidates are Einstein with $\Ric^M=(n-2)(1+H^2)$. Hence, we must have $H=0$ by noticing \eqref{eq-Ric-pmc-cond}.
\end{proof}

\begin{proof}[Proof of Theorem \ref{thm_ricci_int}]
	Since $\delta=(n-1)\Phi-C_2>0$, we have $\beric-\delta\ge C_2-\Ric_{\min}$ by the definition of $\beric$. From \eqref{eq-Ricci-key} (cf. \eqref{eq-sec-int-part1} and \eqref{eq-sec-int-part2}) we have
	\begin{equation}
		0\ge  \frac{2kn-n+2}{nk^2}\mathfrak{A}(n,t)\|\rho^{2k}\|_{\frac{n}{n-2}}-n\|\rho^{2k}\|_{\frac{n}{n-2}} \|\beric \|_{\frac{n}{2}}
	\end{equation}
	by choosing $\delta_0, H_0, t_2$ such that
	\begin{equation*}
		\int_{M}\delta\rho^{2k}=\delta_0\int_{M}\rho^{2k}, \int_{M}(1+H^2)\rho^{2k}=(1+H_0^2)\int_{M}\rho^{2k}, \frac{2kn-n+2}{nk^2} \mathfrak{B}(n,t_2) (1+H_0^2)=n\delta_0.
	\end{equation*}
	Hence,  $M$ is totally umbilical provided $\|\beric\|_{\frac{n}{2}}<\epsilon_2$ by setting $\epsilon_2=\frac{1}{n}\frac{2kn-n+2}{nk^2}\mathfrak{A}(n,t_2)$.
\end{proof}

\section{Scalar curvature pinching theorems}
In this section, we prove rigidity theorems under scalar curvature pinching conditions.
\begin{proof}[Proof of Theorem \ref{thm_scal}]
	Putting \eqref{norm} and \eqref{Iitem} into \eqref{inteq} we have
	\begin{align}\label{parameter-scalar}
		\begin{aligned}
			0&=\int_{M}\rho^{2k-2}|\nabla \tilde h|^2+(2k-2)\int_{M}\rho^{2k-2}|\nabla \rho |^2\\
			&\quad +n\int_{M}(1+H^2)\rho^{2k}-\int_{M}\rho^{2k-2}\Big(\sum_{\alpha,\beta}\tilde \sigma_{\alpha \beta }^2
			+\sum_{\alpha,\beta} N([\tilde A_{\alpha},\tilde A_{\beta}])\Big)\\
			&\quad -n\int_{M}\rho^{2k-2}
			\sum_{\alpha, \beta }\tilde {\sigma }_{\alpha \beta }H^{\alpha }H^{\beta } +(\frac{n^2}{2k}-n )\int_{M}\rho^{2k}H^2\\
			&\ge \int_{M}\rho^{2k-2}|\nabla \tilde h|^2+(2k-2)\int_{M}\rho^{2k-2}|\nabla \rho |^2\\
			&\quad +n\int_{M}(1+H^2)\rho^{2k}-\int_{M}\rho^{2k-2}\big(1+\frac{1}{2}\sgn(p-1)\big)\rho^4\\
			&\quad -n\int_{M}\rho^{2k}H^2 +(\frac{n^2}{2k}-n )\int_{M}\rho^{2k}H^2\\
			&\ge \int_{M}\rho^{2k}\Big(n+n(\frac{n}{2k}-1)H^2-\big(1+\frac{1}{2}\sgn(p-1)\big)\rho^2\Big)\ge 0
		\end{aligned}
	\end{align}
	provided
	\begin{equation*}
		\rho^2\le \frac{n+n(\frac{n}{2k}-1)H^2}{1+\frac{1}{2}\sgn(p-1)}=C_3(n,p,H,k).
	\end{equation*}
	Here we used \eqref{LL-ineq} and \eqref{H2rho2}.

	Hence, by the same arguments as in the proof of Theorem \ref{thm_sec}, $M$ has parallel second fundamental form and parallel mean curvature; both $\rho^2$ and $H^2$ are constant.
	We obtain either $\rho^2=0$ or $\rho^2=C_3$.
	When $\rho^2=C_3$, if $p=1$, then $M=T_{m,k}$;
	if $p\ge 2$, then $H^2=0$ (cf. the equality case in \eqref{H2rho2} and Lemma \ref{lem-LL}) and $S\le \frac{2}{3}n$, so $M$ is the Veronese surface in $\mathbb{S}^4$ by \cite[Theorem 3]{LL92}.
\end{proof}

\begin{proof}[Proof of Theorems \ref{thm_scal_int} and Theorems \ref{thm_scal_int'}]
	Denote $\tau(p)=1+\frac{1}{2}\sgn(p-1)$.
	Since $\delta=C_3-\Phi>0$, we have $C_3-\rho^2\ge \delta-(\rho^2-\Phi)_{+}$.
	From \eqref{parameter-scalar} (cf. \eqref{eq-sec-int-part1} and \eqref{eq-sec-int-part2}) we have
	\begin{equation}
		0\ge \frac{2kn-n+2}{nk^2}\mathfrak{A}(n,t)\|\rho^{2k}\|_{\frac{n}{n-2}}-\tau(p)\|\rho^{2k}\|_{\frac{n}{n-2}} \|(\rho^2-\Phi)_{+}\|_{\frac{n}{2}}
	\end{equation}
	by choosing $\delta_0, H_0, t_3$ such that
	\begin{equation*}
		\int_{M}\delta\rho^{2k}=\delta_0\int_{M}\rho^{2k}, \int_{M}(1+H^2)\rho^{2k}=(1+H_0^2)\int_{M}\rho^{2k}, \frac{2kn-n+2}{nk^2} \mathfrak{B}(n,t_3) (1+H_0^2)=\tau(p)\delta_0.
	\end{equation*}
	Hence,  $M$ is totally umbilical provided $\|(\rho^2-\Phi)_{+}\|_{\frac{n}{2}}<\epsilon_3$ by setting $\epsilon_3=\frac{1}{\tau(p)}\frac{2kn-n+2}{nk^2}\mathfrak{A}(n,t_3)$.
	This proves Theorem \ref{thm_scal_int}.

	When $1\le k<n/2$, since $\frac{n^2}{2k}-n>0$, we can control $\mathfrak{B}(n,t)\int_{M}H^2\rho^{2k}$ by $\int_{M}(\frac{n^2}{2k}-n)H^2\rho^{2k}$. From \eqref{parameter-scalar} we have
\begin{equation*}
		0\ge \frac{2kn-n+2}{nk^2}\mathfrak{A}(n,t)\|\rho^{2k}\|_{\frac{n}{n-2}}-\tau(p)\|\rho^{2k}\|_{\frac{n}{n-2}} \|(\rho^2-\Phi)_{+}\|_{\frac{n}{2}}
\end{equation*}
by setting $\delta'=(1-\frac{1}{3}\sgn(p-1))n-\Phi$ and choosing $\delta_0', t_3'$ such that
\begin{equation*}
	\int_{M}\delta'\rho^{2k}=\delta_0'\int_{M}\rho^{2k}, \quad \frac{2kn-n+2}{nk^2}\mathfrak{B}(n,t_3')=\tau(p)\min\Big\{\frac{n^2}{2k}-n, \delta_0'\Big\},
\end{equation*}
Hence,  $M$ is totally umbilical provided $\|(\rho^2-\Phi)_{+}\|_{\frac{n}{2}}<\epsilon_3'$ by setting $\epsilon_3'=\frac{1}{\tau(p)}\frac{2kn-n+2}{nk^2}\mathfrak{A}(n,t_3')$.
This proves Theorem \ref{thm_scal_int'}.
\end{proof}

% \bibliographystyle{abbrv}
% \bibliography{refers}
% \bib, bibdiv, biblist are defined by the amsrefs package.
\begin{bibdiv}
	\begin{biblist}
	
	\bib{Abr83}{article}{
		  author={Abresch, U.},
		   title={Isoparametric hypersurfaces with four or six distinct principal curvatures. {N}ecessary conditions on the multiplicities},
			date={1983},
			ISSN={0025-5831},
		 journal={Math. Ann.},
		  volume={264},
		  number={3},
		   pages={283\ndash 302},
			 url={https://doi.org/10.1007/BF01459125},
		  review={\MR{714104}},
	}
	
	\bib{LL92}{article}{
		  author={An-Min, Li},
		  author={Jimin, Li},
		   title={An intrinsic rigidity theorem for minimal submanifolds in a sphere},
			date={1992},
			ISSN={0003-889X},
		 journal={Arch. Math. (Basel)},
		  volume={58},
		  number={6},
		   pages={582\ndash 594},
			 url={https://doi.org/10.1007/BF01193528},
		  review={\MR{1161925}},
	}
	
	\bib{Car38}{article}{
		  author={Cartan, \'{E}lie},
		   title={Familles de surfaces isoparam\'{e}triques dans les espaces \`a courbure constante},
			date={1938},
		 journal={Ann. Mat. Pura Appl.},
		  volume={17},
		  number={1},
		   pages={177\ndash 191},
			 url={https://doi.org/10.1007/BF02410700},
		  review={\MR{1553310}},
	}
	
	\bib{Che73}{book}{
		  author={Chen, Bang-Yen},
		   title={Total mean curvature and submanifolds of finite type},
		  series={Series in Pure Mathematics},
	   publisher={World Scientific Publishing Co., Singapore},
			date={1984},
		  volume={1},
			ISBN={9971-966-02-6; 9971-966-03-4},
			 url={https://doi.org/10.1142/0065},
		  review={\MR{749575}},
	}
	
	\bib{CW21}{article}{
		  author={Chen, Hang},
		  author={Wei, Guofang},
		   title={Rigidity of minimal submanifolds in space forms},
			date={2021},
			ISSN={1050-6926},
		 journal={J. Geom. Anal.},
		  volume={31},
		  number={5},
		   pages={4923\ndash 4933},
			 url={https://doi.org/10.1007/s12220-020-00462-7},
		  review={\MR{4244891}},
	}
	
	\bib{Che02}{article}{
		  author={Cheng, Qing-Ming},
		   title={Submanifolds with constant scalar curvature},
			date={2002},
			ISSN={0308-2105},
		 journal={Proc. Roy. Soc. Edinburgh Sect. A},
		  volume={132},
		  number={5},
		   pages={1163\ndash 1183},
			 url={https://doi.org/10.1017/S0308210500002067},
		  review={\MR{1938718}},
	}
	
	\bib{CY77}{article}{
		  author={Cheng, Shiu~Yuen},
		  author={Yau, Shing~Tung},
		   title={Hypersurfaces with constant scalar curvature},
			date={1977},
			ISSN={0025-5831},
		 journal={Math. Ann.},
		  volume={225},
		  number={3},
		   pages={195\ndash 204},
			 url={https://doi.org/10.1007/BF01425237},
		  review={\MR{431043}},
	}
	
	\bib{CdCK70}{inproceedings}{
		  author={Chern, S.~S.},
		  author={do~Carmo, M.},
		  author={Kobayashi, S.},
		   title={Minimal submanifolds of a sphere with second fundamental form of constant length},
			date={1970},
	   booktitle={Functional {A}nalysis and {R}elated {F}ields ({P}roc. {C}onf. for {M}. {S}tone, {U}niv. {C}hicago, {C}hicago, {I}ll., 1968)},
	   publisher={Springer, New York},
		   pages={59\ndash 75},
		  review={\MR{0273546}},
	}
	
	\bib{Chi18}{inproceedings}{
		  author={Chi, Quo-Shin},
		   title={The isoparametric story, a heritage of \'{E}lie {C}artan},
			date={2020},
	   booktitle={Proceedings of the {I}nternational {C}onsortium of {C}hinese {M}athematicians 2018},
	   publisher={Int. Press, Boston, MA},
		   pages={197\ndash 260},
		  review={\MR{4251144}},
	}
	
	\bib{Eji79}{article}{
		  author={Ejiri, Norio},
		   title={Compact minimal submanifolds of a sphere with positive {R}icci curvature},
			date={1979},
			ISSN={0025-5645},
		 journal={J. Math. Soc. Japan},
		  volume={31},
		  number={2},
		   pages={251\ndash 256},
			 url={https://doi.org/10.2969/jmsj/03120251},
		  review={\MR{527542}},
	}
	
	\bib{GT08}{article}{
		  author={Ge, Jianquan},
		  author={Tang, Zizhou},
		   title={A proof of the {DDVV} conjecture and its equality case},
			date={2008},
			ISSN={0030-8730},
		 journal={Pacific J. Math.},
		  volume={237},
		  number={1},
		   pages={87\ndash 95},
			 url={https://doi.org/10.2140/pjm.2008.237.87},
		  review={\MR{2415209}},
	}
	
	\bib{GX12}{article}{
		  author={Gu, Juan-Ru},
		  author={Xu, Hong-Wei},
		   title={On {Y}au rigidity theorem for minimal submanifolds in spheres},
			date={2012},
			ISSN={1073-2780},
		 journal={Math. Res. Lett.},
		  volume={19},
		  number={3},
		   pages={511\ndash 523},
			 url={https://doi.org/10.4310/MRL.2012.v19.n3.a1},
		  review={\MR{2998136}},
	}
	
	\bib{GW15}{article}{
		  author={Guo, Xi},
		  author={Wu, Lan},
		   title={The gap theorems for some extremal submanifolds in a unit sphere},
			date={2015},
			ISSN={1804-1388},
		 journal={Commun. Math.},
		  volume={23},
		  number={1},
		   pages={85\ndash 93},
		  review={\MR{3394079}},
	}
	
	\bib{GL07}{article}{
		  author={Guo, Zhen},
		  author={Li, Haizhong},
		   title={A variational problem for submanifolds in a sphere},
			date={2007},
			ISSN={0026-9255},
		 journal={Monatsh. Math.},
		  volume={152},
		  number={4},
		   pages={295\ndash 302},
			 url={https://doi.org/10.1007/s00605-007-0476-2},
		  review={\MR{2358299}},
	}
	
	\bib{GLW01}{incollection}{
		  author={Guo, Zhen},
		  author={Li, Haizhong},
		  author={Wang, Changping},
		   title={The second variational formula for {W}illmore submanifolds in {$S^n$}},
			date={2001},
		  volume={40},
		   pages={205\ndash 225},
			 url={https://doi.org/10.1007/BF03322706},
			note={Dedicated to Shiing-Shen Chern on his 90th birthday},
		  review={\MR{1860369}},
	}
	
	\bib{Han13}{thesis}{
		  author={Han, Fangfang},
		   title={The eigenvalue and rigidity problem of submanifolds},
			type={Master},
			date={2013},
	}
	
	\bib{HS74}{article}{
		  author={Hoffman, David},
		  author={Spruck, Joel},
		   title={Sobolev and isoperimetric inequalities for {R}iemannian submanifolds},
			date={1974},
			ISSN={0010-3640},
		 journal={Comm. Pure Appl. Math.},
		  volume={27},
		   pages={715\ndash 727},
			 url={https://doi.org/10.1002/cpa.3160270601},
		  review={\MR{365424}},
	}
	
	\bib{Ito75}{article}{
		  author={Itoh, Takehiro},
		   title={On {V}eronese manifolds},
			date={1975},
			ISSN={0025-5645},
		 journal={J. Math. Soc. Japan},
		  volume={27},
		  number={3},
		   pages={497\ndash 506},
			 url={https://doi.org/10.2969/jmsj/02730497},
		  review={\MR{383309}},
	}
	
	\bib{Law69}{article}{
		  author={Lawson, H.~Blaine, Jr.},
		   title={Local rigidity theorems for minimal hypersurfaces},
			date={1969},
			ISSN={0003-486X},
		 journal={Ann. of Math. (2)},
		  volume={89},
		   pages={187\ndash 197},
			 url={https://doi.org/10.2307/1970816},
		  review={\MR{238229}},
	}
	
	\bib{Li93}{article}{
		  author={Li, Hai~Zhong},
		   title={Curvature pinching for odd-dimensional minimal submanifolds in a sphere},
			date={1993},
			ISSN={0350-1302},
		 journal={Publ. Inst. Math. (Beograd) (N.S.)},
		  volume={53(67)},
		   pages={122\ndash 132},
		  review={\MR{1319765}},
	}
	
	\bib{Li96}{article}{
		  author={Li, Haizhong},
		   title={Hypersurfaces with constant scalar curvature in space forms},
			date={1996},
			ISSN={0025-5831},
		 journal={Math. Ann.},
		  volume={305},
		  number={4},
		   pages={665\ndash 672},
			 url={https://doi.org/10.1007/BF01444243},
		  review={\MR{1399710}},
	}
	
	\bib{Li01}{article}{
		  author={Li, Haizhong},
		   title={Willmore hypersurfaces in a sphere},
			date={2001},
			ISSN={1093-6106},
		 journal={Asian J. Math.},
		  volume={5},
		  number={2},
		   pages={365\ndash 377},
			 url={https://doi.org/10.4310/AJM.2001.v5.n2.a4},
		  review={\MR{1868938}},
	}
	
	\bib{Li02a}{article}{
		  author={Li, Haizhong},
		   title={Willmore submanifolds in a sphere},
			date={2002},
			ISSN={1073-2780},
		 journal={Math. Res. Lett.},
		  volume={9},
		  number={5-6},
		   pages={771\ndash 790},
			 url={https://doi.org/10.4310/MRL.2002.v9.n6.a6},
		  review={\MR{1906077}},
	}
	
	\bib{Li02}{article}{
		  author={Li, Haizhong},
		   title={Willmore surfaces in {$S^n$}},
			date={2002},
			ISSN={0232-704X},
		 journal={Ann. Global Anal. Geom.},
		  volume={21},
		  number={2},
		   pages={203\ndash 213},
			 url={https://doi.org/10.1023/A:1014759309675},
		  review={\MR{1894947}},
	}
	
	\bib{Lu11}{article}{
		  author={Lu, Zhiqin},
		   title={Normal scalar curvature conjecture and its applications},
			date={2011},
			ISSN={0022-1236},
		 journal={J. Funct. Anal.},
		  volume={261},
		  number={5},
		   pages={1284\ndash 1308},
			 url={https://doi.org/10.1016/j.jfa.2011.05.002},
		  review={\MR{2807100}},
	}
	
	\bib{Muen80}{article}{
		  author={M\"{u}nzner, Hans~Friedrich},
		   title={Isoparametrische {H}yperfl\"{a}chen in {S}ph\"{a}ren},
			date={1980},
			ISSN={0025-5831},
		 journal={Math. Ann.},
		  volume={251},
		  number={1},
		   pages={57\ndash 71},
			 url={https://doi.org/10.1007/BF01420281},
		  review={\MR{583825}},
	}
	
	\bib{Muen81}{article}{
		  author={M\"{u}nzner, Hans~Friedrich},
		   title={Isoparametrische {H}yperfl\"{a}chen in {S}ph\"{a}ren. {II}. \"{U}ber die {Z}erlegung der {S}ph\"{a}re in {B}allb\"{u}ndel},
			date={1981},
			ISSN={0025-5831},
		 journal={Math. Ann.},
		  volume={256},
		  number={2},
		   pages={215\ndash 232},
			 url={https://doi.org/10.1007/BF01450799},
		  review={\MR{620709}},
	}
	
	\bib{Oku74}{article}{
		  author={Okumura, Masafumi},
		   title={Hypersurfaces and a pinching problem on the second fundamental tensor},
			date={1974},
			ISSN={0002-9327},
		 journal={Amer. J. Math.},
		  volume={96},
		   pages={207\ndash 213},
			 url={https://doi.org/10.2307/2373587},
		  review={\MR{353216}},
	}
	
	\bib{PXZ23}{article}{
		  author={Pan, Pengfei},
		  author={Xu, Hong-Wei},
		  author={Zhao, Entao},
		   title={Global rigidity theorems for submanifolds with parallel mean curvature},
			date={2023},
			ISSN={0252-9602},
		 journal={Acta Math. Sci. Ser. B (Engl. Ed.)},
		  volume={43},
		  number={1},
		   pages={169\ndash 183},
			 url={https://doi.org/10.1007/s10473-023-0111-x},
		  review={\MR{4498377}},
	}
	
	\bib{She89}{article}{
		  author={Shen, Yi~Bing},
		   title={On intrinsic rigidity for minimal submanifolds in a sphere},
			date={1989},
			ISSN={1001-6511},
		 journal={Sci. China Ser. A},
		  volume={32},
		  number={7},
		   pages={769\ndash 781},
		  review={\MR{1057998}},
	}
	
	\bib{She92}{article}{
		  author={Shen, Yi~Bing},
		   title={Curvature pinching for three-dimensional minimal submanifolds in a sphere},
			date={1992},
			ISSN={0002-9939},
		 journal={Proc. Amer. Math. Soc.},
		  volume={115},
		  number={3},
		   pages={791\ndash 795},
			 url={https://doi.org/10.2307/2159229},
		  review={\MR{1093604}},
	}
	
	\bib{Shu07}{article}{
		  author={Shu, Shichang},
		   title={Curvature and rigidity of {W}illmore submanifolds},
			date={2007},
			ISSN={0387-4982},
		 journal={Tsukuba J. Math.},
		  volume={31},
		  number={1},
		   pages={175\ndash 196},
			 url={https://doi.org/10.21099/tkbjm/1496165120},
		  review={\MR{2337125}},
	}
	
	\bib{Sim68}{article}{
		  author={Simons, James},
		   title={Minimal varieties in riemannian manifolds},
			date={1968},
			ISSN={0003-486X},
		 journal={Ann. of Math. (2)},
		  volume={88},
		   pages={62\ndash 105},
			 url={https://doi.org/10.2307/1970556},
		  review={\MR{233295}},
	}
	
	\bib{Sto99}{article}{
		  author={Stolz, Stephan},
		   title={Multiplicities of {D}upin hypersurfaces},
			date={1999},
			ISSN={0020-9910},
		 journal={Invent. Math.},
		  volume={138},
		  number={2},
		   pages={253\ndash 279},
			 url={https://doi.org/10.1007/s002220050378},
		  review={\MR{1720184}},
	}
	
	\bib{Wu09}{article}{
		  author={Wu, Lan},
		   title={A class of variational problems for submanifolds in a space form},
			date={2009},
			ISSN={0362-1588},
		 journal={Houston J. Math.},
		  volume={35},
		  number={2},
		   pages={435\ndash 450},
		  review={\MR{2519540}},
	}
	
	\bib{Xu94}{article}{
		  author={Xu, Hong-Wei},
		   title={{$L_{n/2}$}-pinching theorems for submanifolds with parallel mean curvature in a sphere},
			date={1994},
			ISSN={0025-5645},
		 journal={J. Math. Soc. Japan},
		  volume={46},
		  number={3},
		   pages={503\ndash 515},
			 url={https://doi.org/10.2969/jmsj/04630503},
		  review={\MR{1276835}},
	}
	
	\bib{XG13}{article}{
		  author={Xu, Hong-Wei},
		  author={Gu, Juan-Ru},
		   title={Geometric, topological and differentiable rigidity of submanifolds in space forms},
			date={2013},
			ISSN={1016-443X},
		 journal={Geom. Funct. Anal.},
		  volume={23},
		  number={5},
		   pages={1684\ndash 1703},
			 url={https://doi.org/10.1007/s00039-013-0231-x},
		  review={\MR{3102915}},
	}
	
	\bib{XY11}{article}{
		  author={Xu, Hongwei},
		  author={Yang, Dengyun},
		   title={The gap phenomenon for extremal submanifolds in a sphere},
			date={2011},
			ISSN={0926-2245},
		 journal={Differential Geom. Appl.},
		  volume={29},
		  number={1},
		   pages={26\ndash 34},
			 url={https://doi.org/10.1016/j.difgeo.2010.11.001},
		  review={\MR{2784286}},
	}
	
	\bib{Yan10}{thesis}{
		  author={Yang, Dengyun},
		   title={A study on the geometric rigidity and eigenvalue problems of {W}illmore submanifolds},
			type={Doctoral},
			date={2010},
	}
	
	\bib{YFZ22}{article}{
		  author={Yang, Dengyun},
		  author={Fu, Haiping},
		  author={Zhang, Jinguo},
		   title={Rigidity of willmore submanifolds and extremal submanifolds in the unit sphere},
			date={2022},
		 journal={Personal communication},
	}
	
	\bib{Yau74}{article}{
		  author={Yau, Shing~Tung},
		   title={Submanifolds with constant mean curvature {I}},
			date={1974},
			ISSN={0002-9327},
		 journal={Amer. J. Math.},
		  volume={96},
		   pages={346\ndash 366},
			 url={https://doi.org/10.2307/2373638},
		  review={\MR{370443}},
	}
	
	\bib{Yau75}{article}{
		  author={Yau, Shing~Tung},
		   title={Submanifolds with constant mean curvature {II}},
			date={1975},
			ISSN={0002-9327},
		 journal={Amer. J. Math.},
		  volume={97},
		   pages={76\ndash 100},
			 url={https://doi.org/10.2307/2373661},
		  review={\MR{370443}},
	}
	
	\bib{Zhu19}{thesis}{
		  author={Zhu, Yueying},
		   title={A global rigidity theorem for submanifolds with parallel mean curvature in space forms},
			type={Master},
			date={2019},
	}

	\end{biblist}
	\end{bibdiv}

\end{document}